\mathchardef\mhyphen="2D
\newcommand{\m}{\alpha}
\newcommand{\kk}{\mathds{k}}
\newcommand{\NN}{\ensuremath{\mathbb{N}}}
\newcommand{\NNp}{\ensuremath{\mathbb{N}^+}}
\newcommand{\ZZ}{\ensuremath{\mathbb{Z}}}
\newcommand{\FFF}{\ensuremath{\mathcal{F}}}
\newcommand{\GGG}{\ensuremath{\mathcal{G}}}
\newcommand{\SSS}{\ensuremath{\mathcal{S}}}
\newcommand{\III}{\ensuremath{\mathcal{I}}}
\newcommand{\PPPP}{\ensuremath{\mathcal{P}}}
\newcommand{\CCC}{\ensuremath{\mathsf{C}}}
\newcommand{\DDD}{\ensuremath{\mathsf{D}}}
\newcommand{\PPP}{\ensuremath{\mathsf{P}}}
\newcommand{\nn}{\ensuremath{\mathfrak{n}}}
\newcommand{\pp}{\ensuremath{\mathfrak{p}}}
\newcommand{\ra}{\ensuremath{\longrightarrow}}
\newcommand{\sra}{\ensuremath{\rightarrow}}
\newcommand{\Hom}{\operatorname{Hom}}
\newcommand{\uHom}{\operatorname{\underline{Hom}}}
\newcommand{\pd}{\operatorname{pd}}
\newcommand{\gldim}{\operatorname{gl dim}}
\newcommand{\ol}{\overline}
\newcommand{\st}{\ensuremath{\mathrel{}\middle|\mathrel{}}}
\newcommand{\Id}{\operatorname{Id}}
\newcommand{\im}{\operatorname{im}}
\newcommand{\coker}{\operatorname{coker}}
\newcommand{\dirlim}{\varinjlim}
\def\lcm{\mathop{\operatorname{lcm}}}
\newcommand{\gr}{\operatorname{gr}\mhyphen}
\newcommand{\qgr}{\operatorname{qgr}\mhyphen}
\newcommand{\grB}{\operatorname{gr}\mhyphen(B,\ZZ_\fin)}
\newcommand{\grR}{\operatorname{gr}\mhyphen(R,\Gamma)}
\newcommand{\grS}{\operatorname{gr}\mhyphen(S,G)}
\newcommand{\QgrA}{Q_{\mathrm{gr}}(A)}
\newcommand{\fin}{\mathrm{fin}}
\newcommand{\Aut}{\operatorname{Aut}}
\newcommand{\Pic}{\operatorname{Pic}}
\newcommand{\s}[1]{\ensuremath{\langle #1 \rangle}}
\DeclareMathOperator{\Spec}{Spec}
\DeclareMathOperator{\proj}{proj}
\newcommand{\tors}{\ensuremath{\operatorname{tors}}\mhyphen}
\newcommand{\fdim}{\ensuremath{\operatorname{fdim}}\mhyphen}
\newcommand{\coh}{\operatorname{coh}}
\numberwithin{equation}{section}
\newtheorem{theorem}[equation]{Theorem}
\newtheorem{corollary}[equation]{Corollary}
\newtheorem{lemma}[equation]{Lemma}
\newtheorem{proposition}[equation]{Proposition}
\theoremstyle{remark}
\newtheorem{remark}[equation]{Remark}
\theoremstyle{definition}
\newtheorem{definition}[equation]{Definition}
\newtheorem*{notation}{Notation}
\newtheorem*{acknowledgments}{Acknowledgments}
\begin{document}
\title{The noncommutative schemes of generalized Weyl algebras}
\author{Robert Won}
\address{Department of Mathematics, UCSD, La Jolla, CA 92093-0112, USA.}
\address{Permanent address: Department of Mathematics and Statistics, Wake Forest University, Winston-Salem, NC 27109, USA.}
\email{wonrj@wfu.edu}
\thanks{The author was partially supported by NSF grant DMS-1201572.}
\date{}
\subjclass[2010]{16W50, 16D90, 16S38, 14A22}
\keywords{Noncommutative projective geometry, generalized Weyl algebra, graded module category, category equivalence}

\maketitle

\begin{abstract}
The first Weyl algebra over $\kk$, $A_1 = \kk \langle x, y\rangle/(xy-yx - 1)$ admits a natural $\ZZ$-grading by letting $\deg x = 1$ and $\deg y = -1$. Smith showed that $\gr A_1$ is equivalent to the category of quasicoherent sheaves on a certain quotient stack. Using autoequivalences of $\gr A_1$, Smith constructed a commutative ring $C$, graded by finite subsets of the integers. He then showed $\gr A_1 \equiv \gr (C, \ZZ_\fin)$. In this paper, we prove analogues of Smith's results by using autoequivalences of a graded module category to construct rings with equivalent graded module categories. For certain generalized Weyl algebras, we use autoequivalences defined in a companion paper so that these constructions yield commutative rings.
\end{abstract}

\section{Introduction}

Throughout this paper, let $\kk$ be an algebraically closed field of characteristic zero. All rings will be $\kk$-algebras and all categories and equivalences will be $\kk$-linear.

In noncommutative projective geometry, the techniques of (commutative) projective algebraic geometry are generalized to the setting of noncommutative rings. In \cite{AZ}, Artin and Zhang defined the noncommutative projective scheme associated to a right noetherian $\NN$-graded $\kk$-algebra $A$ in terms of its graded module category. Let $\gr A$ be the category of finitely generated $\ZZ$-graded right $A$-modules. A graded right $A$-module $M$ is called \emph{torsion} if there exists $r \in \NN$ such that $M \cdot A_{\geq r} = 0$. Let $\tors A$ be the full subcategory of $\gr A$ generated by torsion modules. Define the quotient category $\proj A = \gr A /\tors A$ and let $\pi$ denote the canonical functor $\gr A \rightarrow \proj A$. Let $\SSS$ denote the shift functor on $\proj A$, induced by the shift functor on $\gr A$.

\begin{definition}[Artin-Zhang, \cite{AZ}] Let $A$ be a right noetherian graded $\kk$-algebra. The \emph{noncommutative projective scheme of $A$} is the triple
$(\proj A, \pi A, \SSS)$.
\end{definition}

Much of the literature on noncommutative projective schemes has focused on \emph{connected graded $\kk$-algebras}---$\NN$-graded $\kk$-algebras $A = \bigoplus_{i \in \NN} A_i$ with $A_0 = \kk$. Since the commutative polynomial ring under its usual grading satisfies these hypotheses, connected graded $\kk$-algebras are analogues of (quotients of) commutative polynomial rings. For a connected graded $\kk$-algebra $A$ of Gelfand-Kirillov (GK) dimension 2, we call $\proj A$ a \emph{noncommutative projective curve}. A \emph{noncommutative projective surface} is $\proj A$ for a connected graded $\kk$-algebra $A$ of GK dimension 3. 

In \cite{AS}, Artin and Stafford classified noncommutative projective curves. The classification of noncommutative projective surfaces is an active area of current research. The noncommutative analogues of $\kk[x,y,z]$ (so-called \emph{Artin-Schelter regular algebras} of dimension 3) are well-understood (see \cite{AS87, ATV91, steph, steph97}) and many other examples of noncommutative projective surfaces have been studied (see \cite{vdb, vdb2, rogalski, sierra2}).

The first Weyl algebra over $\kk$ is the ring $A_1 = \kk \langle x, y \rangle/(xy - yx - 1)$. While $A_1$ is not a connected graded ring, it admits a natural $\ZZ$-grading by letting $\deg x =1$ and $\deg y = -1$. In \cite{sierra}, Sierra studied $\gr A_1$, the category of finitely generated $\ZZ$-graded right $A_1$-modules. We can picture the graded simple right modules of $\gr A_1$ as follows. For every $\lambda \in \kk \setminus \ZZ$, there is a simple module $M_\lambda$. For each integer $n$, there are two simple modules $X\s{n}$ and $Y\s{n}$. We can therefore represent the graded simples by the affine line $\Spec \kk[z]$ with integer points doubled.

\begin{figure}[h!]
\centerline{
	\begin{tikzpicture}[ scale=1.6]	
			\node at (-1.7,.1)[label = above: {}]{};
		\draw[thick,<-](-3.9,0)--(-3.1,0);
		\fill[black](-3,-.1) circle (1pt);
		\fill[black](-3,.1) circle (1pt);
		\node at (-3,-.1)[label = below: $-3$]{};
		\draw[thick](-2.9,0)--(-2.1,0);
		\fill[black](-2,-.1) circle (1pt);
		\fill[black](-2,.1) circle (1pt);
		\node at (-2,-.1)[label = below: $-2$]{};
		\draw[thick](-1.9,0)--(-1.1,0);
		\fill[black](-1,-.1) circle (1pt);
		\fill[black](-1,.1) circle (1pt);
		\node at (-1,-.1)[label = below:$-1$]{};
		\draw[thick](-.9,0)--(-.1,0);
		\fill[black](0,-.1) circle (1pt);
		\fill[black](0,.1) circle (1pt);
		\node at (0,-.1)[label = below: $0$]{};
		\draw[thick](.1,0)--(.9,0);
		\fill[black](1,-.1) circle (1pt);
		\fill[black](1,.1) circle (1pt);
		\node at (1,-.1)[label = below: $1$]{};
		\draw[thick](1.1,0)--(1.9,0);		
		\fill[black](2,-.1) circle (1pt);
		\fill[black](2,.1) circle (1pt);
		\node at (2,-.1)[label = below: $2$]{};
		\draw[thick,](2.1,0)--(2.9,0);		
		\fill[black](3,-.1) circle (1pt);
		\fill[black](3,.1) circle (1pt);
		\node at (3,-.1)[label = below: $3$]{};
		\draw[thick,->](3.1,0)--(3.9,0);		
	\end{tikzpicture}
	}
\caption{The graded simple modules of $\gr A_1$.} \label{fig.grA}
\end{figure}
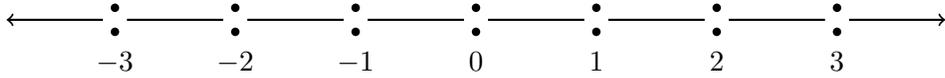

Sierra computed the Picard group---the group of autoequivalences modulo natural isomorphism---of this category. In particular, she constructed autoequivalences $\iota_n$ which permute the simple modules $X\s{n}$ and $Y\s{n}$ while fixing all other simples. Since the $\iota_n$ have order two, we call them \emph{involutions} and they generate a subgroup of the Picard group isomorphic to $(\ZZ/2\ZZ)^{(\ZZ)}$. We can identify this subgroup with $\ZZ_\fin$, the group of finite subsets of the integers with operation exclusive or. 

In \cite{smith}, Smith used Sierra's autoequivalences to construct a $\ZZ_\fin$-graded commutative ring $R$. For $J \in \ZZ_\fin$, define $\iota_J = \prod_{j \in J} \iota_j$. Let
\[ R = \bigoplus_{J \in \ZZ_\fin} \Hom_{\gr A_1}(A_1, \iota_J A_1).
\] 

\begin{theorem}[Smith, {\cite[Theorem 5.14]{smith}}] \label{smiththm} There is an equivalence of categories 
\[\gr A_1 \equiv \gr(R, \ZZ_\fin).\]
\end{theorem}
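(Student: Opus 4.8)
The plan is to build the candidate functor that general principles suggest and then verify, via a graded Morita-type argument, that it is an equivalence. Consider
\[
\Gamma\colon \gr A_1 \longrightarrow \gr(R,\ZZ_\fin), \qquad
\Gamma(M) = \bigoplus_{J \in \ZZ_\fin} \Hom_{\gr A_1}(A_1, \iota_J M).
\]
The natural isomorphisms $\iota_J\iota_K\cong\iota_{J\triangle K}$ (so that $J\mapsto\iota_J$ is an action of $\ZZ_\fin$ up to coherent isomorphism) make $\Gamma(M)$ a graded right $R$-module, and $\Gamma(A_1)=R$ by construction. First I would record the formal facts. Since $A_1$ is free of rank one it is projective in $\gr A_1$, and each $\iota_J$ is an exact autoequivalence, so $\Gamma$ is exact; since $A_1$ is finitely presented over the noetherian ring $A_1$, the functors $\Hom_{\gr A_1}(A_1,-)$ and $\Gamma$ (extended to all graded modules) commute with direct limits; and for each $K\in\ZZ_\fin$ there is an isomorphism $\Gamma(\iota_K A_1)\cong R(K)$ of graded right $R$-modules, natural in $K$, again coming from $\iota_J\iota_K\cong\iota_{J\triangle K}$. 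Thus $\Gamma$ sends the family of compact projectives $\{\iota_J A_1\}_{J\in\ZZ_\fin}$ of $\gr A_1$ to the family of compact projectives $\{R(K)\}_{K\in\ZZ_\fin}$ of $\gr(R,\ZZ_\fin)$, and it induces on the $\Hom$-groups among these objects exactly the isomorphisms that encode the multiplication of $R$.

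The one substantive input is a \emph{generation} statement: the objects $\{\iota_J A_1 : J\in\ZZ_\fin\}$ \emph{generate} $\gr A_1$, i.e.\ every object of $\gr A_1$ is an epimorphic image of a finite direct sum $\bigoplus_{i=1}^n\iota_{J_i}A_1$ (equivalently, $\bigoplus_J\Hom_{\gr A_1}(A_1,\iota_J(-))$ is faithful). This is the step that is genuinely about $A_1$ rather than formal: $A_1$ alone does not generate $\gr A_1$, since it admits no epimorphism onto its shift $A_1(1)$ --- the only graded homomorphisms $A_1\to A_1(1)$ are left multiplications by elements of $(A_1)_1$, none of which is surjective --- so the twisting autoequivalences must be used in an essential way, and one must exhibit, for each $n\in\ZZ$, finitely many $\iota_{J_i}A_1$ whose images in $A_1(n)$ together span it. The available tools are Sierra's analysis of $\gr A_1$ recalled above: the classification of the graded simple modules (Figure~\ref{fig.grA}), the structure of $\Pic(\gr A_1)$, and the submodule lattices of the modules $\iota_J A_1$. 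I expect this generation lemma to be the main obstacle.

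Granting it, the theorem follows from the graded analogue of Morita's theorem (cf.\ \cite{sierra}): an exact, colimit-preserving functor between Grothendieck categories that carries a generating family of compact projectives to a generating family of compact projectives and induces isomorphisms on all $\Hom$-groups among them is an equivalence. Applied to the extension $\Gamma\colon\Gr A_1\to\Gr(R,\ZZ_\fin)$ with the families $\{\iota_J A_1\}$ and $\{R(K)\}$ (the latter generating $\Gr(R,\ZZ_\fin)$ for the obvious reason), this yields an equivalence $\Gr A_1\equiv\Gr(R,\ZZ_\fin)$. It remains to see $\Gamma$ restricts to the finitely generated subcategories. For essential surjectivity: a finitely generated object $N$ of $\gr(R,\ZZ_\fin)$ admits a surjection from a finite direct sum $\bigoplus R(J_i)=\Gamma\big(\bigoplus\iota_{J_i}A_1\big)$; since $\Gamma$ is an equivalence, $N\cong\Gamma(M)$ where $M$ is the corresponding quotient of $\bigoplus\iota_{J_i}A_1$, and $M$, being finitely generated, lies in $\gr A_1$. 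Conversely, by the generation lemma a finitely generated graded $A_1$-module $M$ is a quotient of some finite $\bigoplus\iota_{J_i}A_1$, so $\Gamma(M)$ is a quotient of the finitely generated module $\bigoplus R(J_i)$ and hence finitely generated. (This last point is not needed for the equivalence, but it connects to Smith's geometric picture: $R_\emptyset=\Hom_{\gr A_1}(A_1,A_1)=(A_1)_0\cong\kk[z]$, matching the line $\Spec\kk[z]$ of Figure~\ref{fig.grA}; and for Smith's particular involutions $\iota_n$ one checks in addition that $R$ is commutative.)
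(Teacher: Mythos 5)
Your proposal matches the approach the paper abstracts in Propositions~\ref{autoring} and \ref{RSequiv} (and Smith's original argument): construct $R$ from the Hom-spaces, regard $P=\bigoplus_{J}\iota_J A_1$ as a bigraded $R$-$A_1$-bimodule, and deduce the equivalence from a graded Morita-type theorem (del R\'{i}o's) once $P$ is shown to generate $\gr A_1$. You correctly isolate that generation statement as the one substantive, non-formal input; the coherence of the isomorphisms $\iota_J\iota_K\cong\iota_{J\oplus K}$ you rely on to make $R$ associative and $P$ a bimodule is exactly the paper's condition~\eqref{thetacond}, which holds for these involutions because they are subfunctors of the identity acting on morphisms by restriction.
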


These results of Sierra and Smith suggest that there is interesting geometry within $\ZZ$-graded rings. In this paper, we prove analogues of Smith's results for generalized Weyl algebras by using autoequivalences of a category of graded modules to construct rings. Let $S$ be a $G$-graded ring and let $\Gamma$ be a subgroup of $\Pic(\gr S)$. In Proposition~\ref{autoring}, we give conditions on $\Gamma$ such that Smith's construction yields an associative ring $R$.

In a companion paper \cite{wonpic}, the author proved analogues of the results of \cite{sierra} to certain generalized Weyl algebras $A(f)$, with base ring $\kk[z]$, automorphism $\sigma: \kk[z] \to \kk[z]$ given by $\sigma(z) = z+1$ where $f = z(z+\m ) \in \kk[z]$ is a quadratic polynomial. These rings are primitive factors of the universal enveloping algebra of $\mathfrak{sl}(2,\kk)$.

In \cite{wonpic}, the author computed the Picard group of $\gr A(f)$, constructing analogues of Sierra's involutions $\iota_n$. In this paper, we prove analogues of Smith's results by using these involutions to define commutative rings. When $f$ has a multiple root, we construct the $\ZZ_\fin$-graded ring 
\[ B \cong \frac{\kk[z][b_n \mid n\in \ZZ]}{\left( b_n^2 = (z+n)^2 \mid n \in \ZZ \right)}.
\]
In Theorem~\ref{Bequivalent}, we prove that the category of graded modules over $B$ is equivalent to $\gr A(f)$.

When $f = z(z+\m )$ for some $\m \in \kk \setminus \ZZ$, we construct the $\ZZ_\fin \times \ZZ_\fin$-graded ring
\[ D \cong \frac{\kk[c_n, d_n \mid n\in \ZZ]}{\left( c_n^2 - n = c_m^2 - m, c_n^2 = d_n^2 - \m \mid m,n \in \ZZ\right)}
\]
and prove in Theorem~\ref{dDequivalent} that there is an equivalence of categories $\gr A(f) \equiv \gr (D, \ZZ_\fin \times \ZZ_\fin).$

Finally, when $f = z(z+\m )$ for some $\m \in \NN^+$, the existence of finite-dimensional $A(f)$-modules (in contrast with the previous two cases, in which all modules are infinite-dimensional) is an obstruction to a straightforward analogue of Smith's result. In this case, we consider the quotient category $\qgr A = \gr A/ \fdim A$ where $\fdim A$ is the full subcategory of $\gr A$ generated by finite-dimensional modules. When $R$ is a connected $\NN$-graded $\kk$-algebra, $\fdim R$ is the same as Artin and Zhang's $\tors R$. However, as generalized Weyl algebras do not satisfy these hypotheses, in general $\tors A$ is not the same as $\fdim A$. As an example, if $M$ is a finitely generated right-bounded module which is not left-bounded, then $M$ is in $\tors A$ but not $\fdim A$. In Corollary~\ref{cBequiv}, we prove that $\qgr A$ in this case is equivalent to $\gr A(z^2)$.

Although the ring $A\left( z (z+\m) \right)$ has finite global dimension, $\qgr A\left (z (z+\m \right)$ has infinite homological dimension, as $A \left(z^2\right)$ has infinite global dimension \cite[Theorem 5]{bav}. This is in contrast to the case of a noetherian connected graded domain $A$---when $A$ has finite global dimension, $\proj A$ has finite homological dimension. A consequence is that $\qgr A \left(z (z +\m) \right)$ has enough projectives (see Proposition~\ref{qgrgenerator}), which is another contrast to the connected graded case and with the quasicoherent sheaves on a projective variety.

This paper provides additional examples of the noncommutative geometry of $\ZZ$-graded rings. We discuss some possible directions for future work. The results in this paper are especially interesting in light of Artin and Stafford's classification of noncommutative projective curves  in \cite{AS}.
\begin{theorem}[Artin-Stafford, {\cite{AS}}] \label{ASthm}Let $A$ be a connected $\NN$-graded domain, generated in degree $1$ with GK dimension 2. Then there exists a projective curve $X$ such that $\qgr A \equiv \coh(X)$. 
\end{theorem}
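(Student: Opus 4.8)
The plan is to recover a projective curve from the graded ring of fractions of $A$ and then to show that, after passing to $\qgr$, $A$ becomes indistinguishable from a twisted homogeneous coordinate ring on that curve. First I would record that $A$ is noetherian: for a connected $\NN$-graded domain generated in degree $1$, multiplication by a fixed nonzero element of $A_1$ is injective, so the Hilbert function $n \mapsto \dim_\kk A_n$ is non-decreasing, and GK dimension $2$ forces it to be eventually bounded by a linear function of $n$; Artin and Stafford use this rigidity to rule out infinite strictly ascending chains of graded right ideals. With noetherianity in hand, $A$ admits a graded ring of fractions $Q = \QgrA$, and since $A$ is a domain of GK dimension $2$ we get $Q \cong D[t, t^{-1}; \sigma]$, where $D = Q_0$ is a division ring, $\sigma \in \Aut(D)$, and $t$ is any nonzero homogeneous element of degree $1$. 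A growth estimate in the spirit of Tsen's theorem, together with the hypothesis that $\kk$ is algebraically closed, then shows that $D$ is in fact a field $K$ of transcendence degree $1$ over $\kk$, with $\sigma \in \Aut_\kk(K)$.

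Next, let $X$ be the smooth projective curve with function field $K$; the automorphism $\sigma$ extends uniquely to an automorphism of $X$, still denoted $\sigma$. Under $A \hookrightarrow Q$ we may write $A_n = V_n t^n$ with $V_n \subseteq K$ a finite-dimensional $\kk$-subspace, and the multiplication of $A$ becomes the family of relations $V_m \cdot \sigma^m(V_n) \subseteq V_{m+n}$. The technical heart of the argument is to manufacture from this data an invertible sheaf $\mathcal{L}$ on $X$ such that $V_n = H^0\!\left(X, \mathcal{L} \otimes \mathcal{L}^\sigma \otimes \cdots \otimes \mathcal{L}^{\sigma^{n-1}}\right)$ for all $n \gg 0$; equivalently, writing $B = B(X, \mathcal{L}, \sigma)$ for the associated twisted homogeneous coordinate ring, one shows $A \subseteq B$ with $B/A$ finite-dimensional. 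Doing so amounts to proving that the graded ideal structure of $A$ ``sees'' every point of $X$, so that $A$ is not contained in any proper twisted homogeneous coordinate subring; this is where one must separate the case in which $\sigma$ has infinite order from the case in which $\sigma$ has finite order (in the latter case $B$ is module-finite over its center, and the analysis, though parallel, is genuinely different). One also checks that $\deg \mathcal{L} > 0$ --- which holds because GK dimension exactly $2$, rather than $\leq 1$, forces $\dim_\kk A_n$ to grow --- and hence that $\mathcal{L}$ is $\sigma$-ample.

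Finally, since $B/A$ is finite-dimensional, $\gr A$ and $\gr B$ differ only by finite-dimensional modules, so their quotients agree and $\qgr A \equiv \qgr B$. By the theorem of Artin and Van den Bergh on twisted homogeneous coordinate rings, $\sigma$-ampleness of $\mathcal{L}$ gives $\qgr B(X, \mathcal{L}, \sigma) \equiv \coh X$, and composing the two equivalences yields $\qgr A \equiv \coh X$. The main obstacle is squarely the middle step: extracting the curve $X$ and the invertible sheaf $\mathcal{L}$ from the abstract ring $A$, and showing the discrepancy $B/A$ is finite-dimensional, is the technical core of \cite{AS} and rests on a delicate study of how elements of $A$ factor over the points of $X$, together with the dichotomy in the order of $\sigma$.
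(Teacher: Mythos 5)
This theorem is quoted in the paper as a known result of Artin and Stafford [AS]; the paper offers no proof of it, so there is no in-paper argument to compare against. I can only assess your sketch on its own merits as an outline of the proof in [AS].

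Your outline captures the broad strategy---noetherianity, the graded quotient ring $Q\cong K[t,t^{-1};\sigma]$ with $K$ a field of transcendence degree one, comparison with a twisted homogeneous coordinate ring via the spaces $V_n\subseteq K$, the dichotomy on the order of $\sigma$, and the Artin--Van den Bergh equivalence---but it contains one genuine error: you take $X$ to be \emph{the smooth} projective curve with function field $K$. The curve produced by Artin and Stafford need not be smooth, and in general cannot be replaced by the smooth model. Concretely, let $Y\subset\PP^2$ be an integral nodal plane cubic and $A=\kk[x,y,z]/(F)$ its homogeneous coordinate ring; then $A$ is a connected graded domain, generated in degree one, of GK dimension two, and $\qgr A\equiv\coh(Y)$ by Serre's theorem, with $Y$ singular. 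Since $\coh(Y)\not\equiv\coh(\PP^1)$, where $\PP^1$ is the normalization---the two categories do not even have the same global dimension---no invertible sheaf on the smooth model has a section ring agreeing with $A$ in large degree, so the construction you describe cannot produce the required $\mathcal{L}$. In [AS] the smooth model of $K$ is only an intermediate tool; the subspaces $V_n$ determine which points of the normalization must be identified, and building the possibly singular $X$ from that data is itself one of the hard parts of the argument, on a par with the finite-codimension comparison and $\sigma$-ampleness that you rightly flag as the technical core. With the smooth-model claim corrected, your sketch is an accurate high-level account.
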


Smith showed that $\gr A_1$ could be realized as the coherent sheaves on a quotient stack. Is there an analogue of Theorem~\ref{ASthm} for $\ZZ$-graded rings involving coherent sheaves on stacks? 

To close, we briefly summarize the organization of this paper. In section~\ref{prelims}, we establish notation and recall results on graded module categories over generalized Weyl algebras from \cite{wonpic}. In section~\ref{autoequivrings}, we use autoequivalences of a graded module category to construct rings with equivalent graded module categories. Finally, in section~\ref{gwaequiv}, we consider generalized Weyl algebras $A(f)$ and construct commutative rings whose graded module categories are equivalent to $\qgr A(f)$.

\begin{acknowledgments}The author was partially supported by NSF grant DMS-1201572. The research in this paper was completed while the author was a PhD student at the University of California, San Diego and, in some cases, extra details may be found in the author's PhD thesis \cite{won}. The author would like to thank Daniel Rogalski for his guidance, and Paul Smith, Sue Sierra, Cal Spicer, Perry Strahl, and Ryan Cooper for helpful conversations. The author would also like to thank the referee for suggesting many improvements.
\end{acknowledgments}

\section{Preliminaries} \label{prelims}
In this section, we briefly summarize the relevant results on graded module categories over generalized Weyl algebras from the companion paper \cite{wonpic}. 

\subsection{Notation}
We begin by fixing basic definitions, terminology, and notation. We follow the convention that 0 is a natural number so $\NN = \ZZ_{\geq 0}$. We use $\NN^+$ to denote the positive natural numbers. The notation $\ZZ_\fin$ denotes the group of finite subsets of the integers with operation exclusive or. We omit the braces on the singleton set $\{n\} \in \ZZ_\fin$ and refer to it simply as $n$.

If $\Gamma$ is an abelian semigroup, then we say a $\kk$-algebra $R$ is \emph{$\Gamma$-graded} if $R$ has a $\kk$-vector space decomposition
\[ R = \bigoplus_{\gamma \in \Gamma} R_\gamma
\]
such that $R_\gamma \cdot R_\delta \subseteq R_{\gamma +\delta}$ for all $\gamma, \delta \in \Gamma$. For a $\ZZ$-graded Ore domain $R$, we write $Q_{\mathrm{gr}}(R)$ for the graded quotient ring of $R$, the localization of $R$ at all nonzero homogeneous elements.

A \emph{graded right $R$-module} is an $R$-module $M$ with a $\kk$-vector space decomposition $M = \bigoplus_{\gamma \in \Gamma} M_\gamma$ such that $M_\gamma \cdot R_\delta \subseteq M_{\gamma + \delta}$ for all $\gamma, \delta \in \Gamma$. If $M$ and $N$ are graded right $R$-modules and $\delta \in \Gamma$, then a \emph{$\Gamma$-graded right $R$-module homomorphism of degree $\delta$} is an $R$-module homomorphism $\varphi$ such that $\varphi(M_\gamma) \subseteq N_{\gamma + \delta}$ for all $\gamma \in \Gamma$. If $\delta = 0$, then $\varphi$ is referred to as a \emph{$\Gamma$-graded right $R$-module homomorphism}. 

Define $\uHom_R(M,N)_\delta$ to be the set of all graded homomorphisms of degree $\delta$ from $M$ to $N$ and define
\[ \uHom_R(M,N) = \bigoplus_{\delta \in \Gamma} \uHom_R(M,N)_\delta.
\] 

The category of all finitely generated $\Gamma$-graded right $R$-modules with $\Gamma$-graded right $R$-module homomorphisms is an abelian category denoted $\gr (R, \Gamma)$. When $\Gamma = \ZZ$ we use the notation $\gr R$ in place of $\gr (R, \ZZ)$. When we are working in the category $\gr (R, \Gamma)$, we denote the $\Gamma$-graded right $R$-module homomorphisms by 
\[ \Hom_{\gr (R, \Gamma)}(M,N) = \uHom_R(M,N)_0.
\]

For a $\Gamma$-graded $R$-module $M$ and $\gamma \in \Gamma$, we denote the \emph{shifted module} $M\s{\gamma} = \bigoplus_{\delta \in \Gamma} M \s{\gamma}_\delta$ where $M\s{\gamma}_\delta = M_{\delta - \gamma}$. This is the opposite of the standard convention, but in keeping with the convention used in \cite{sierra} and \cite{wonpic}.

\subsection{Generalized Weyl algebras}
Fix $f \in \kk[z]$, let $\sigma: \kk[z] \sra \kk[z]$ be the automorphism given by $\sigma(z) = z+1$, and let 
\[A(f) = \frac{\kk[z] \langle x, y \rangle}{\begin{pmatrix}xz = \sigma(z)x & yz = \sigma^{-1}(z)y \\ xy = f & yx = \sigma^{-1}(f)\end{pmatrix}}.
\]
Then $A(f) = \kk[z]\left(\sigma, f\right)$ is a \emph{generalized Weyl algebra (GWA)} of degree 1 with base ring $\kk[z]$, defining element $f$, and defining automorphism $\sigma$. 

By \cite[Theorem 3.28]{bavjordan}, $A(f) \cong A(g)$ if and only if $f(z) = \eta g( \tau \pm z)$ for some $\eta, \tau \in \kk$ with $\eta \neq 0$. Hence, without loss of generality, we may assume that $f$ is monic and that $0$ is a root of $f$. We may also assume that $0$ is the largest integer root of $f$. The properties of $A(f)$ are determined by the distance between the roots of $f$. We say that two distinct roots, $\lambda$ and $\mu$ are \emph{congruent} if $\lambda - \mu \in \ZZ$. 

Generalized Weyl algebras were named by Bavula in \cite{bav}, in which he studied rings of the form $A(f)$ for $f$ of arbitrary degree. However, the study of GWAs of this form goes back at least as far as a paper of Joseph \cite{joseph}. Joseph determined that $A(f)$ is simple if and only if no two roots of $f$ are congruent. Hodges also studied these rings as noncommutative deformations of Type-A Kleinian singularities \cite{hodges}.

Bavula and Hodges independently proved that $A(f)$ is a noncommutative noetherian domain of Krull dimension $1$. They also determined that the global dimension of $A(f)$ depend on whether $f$ has multiple or congruent roots, as follows.
\begin{theorem}[Bavula {\cite[Theorem 5]{bav}} and Hodges {\cite[Theorem 4.4]{hodges}}] \label{gldimA} The global dimension of $A$ is equal to
\[ \gldim A = \begin{cases}\infty, & \text{if $f$ has at least one multiple root} \\ 2, & \text{if $f$ has no multiple roots but has congruent roots}; \\ 1, & \text{if $f$ has neither multiple nor congruent roots.} \end{cases}
\]
\end{theorem}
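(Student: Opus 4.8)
The plan is to compute $\gldim A$ via the projective dimensions of the simple modules. By Bavula's classification, the simple objects of $\gr A$ are \emph{weight modules} $S=\bigoplus_\mu S^\mu$, the $S^\mu$ being the $z$-eigenspaces, and they are indexed by the $\langle\sigma\rangle$-orbits $O=\mu+\ZZ$ in $\Spec\kk[z]$: an orbit disjoint from the zero set of $f$ carries one ``generic'' simple, on which $x$ and $y$ act bijectively, while the roots of $f$ in an orbit $O$ cut $O$ into intervals, each unbounded interval giving an infinite-dimensional simple and each bounded interval a finite-dimensional one (the latter occurring precisely when $f$ has two congruent roots). Since $\gldim A$ equals the graded global dimension and $A$ is a $\ZZ$-graded domain, a d\'evissage---filtering a finitely generated graded module by its $\kk[z]$-torsion submodule, whose complement becomes a module over the hereditary localisation $A[x^{-1}]\cong\kk[z][x^{\pm1};\sigma]$ and hence has projective dimension at most one, and then reducing the torsion part (which is supported on finitely many orbits) onto the weight simples---shows $\gldim A=\sup\{\pd_A S:S\text{ a graded simple}\}$, and the problem becomes the computation of these projective dimensions.

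For each simple I would build a free resolution directly from the defining relations of $A$. For an ``upward'' interval with smallest weight $\nu$ and bounding root $\mu=\nu-1$ of $f$ one checks that $S\cong A/(yA+(z-\nu)A)$, and its first syzygy is controlled by the identity $yx=\sigma^{-1}(f)$, which near $z=\nu$ reads $\sigma^{-1}(f)=(z-\nu)^{k}v(z)$ with $v(\nu)\neq0$, where $k$ is the multiplicity of $\mu$. When $f$ is squarefree ($k=1$ everywhere) the syzygies close after one further step, so $\pd_A S\le2$ for every simple---and $\le1$ when the interval is unbounded or the simple is generic, since then the relevant $A/yA$ (or $A/xA$) is a weight module of projective dimension one built out of such simples. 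Thus if $f$ is squarefree then $\gldim A<\infty$ and $\gldim A\le2$; if moreover $f$ has no congruent roots there are no bounded intervals, every simple has $\pd_A S\le1$, and $\gldim A\le1$. Since $A$ is a domain but not a division ring---for instance $A/xA$ is $\kk[z]$-torsion, hence not a direct summand of the torsion-free module $A$, hence not projective---one has $\gldim A\ge1$ in all cases; this settles the value $1$ when $f$ has neither multiple nor congruent roots.

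When $f$ is squarefree with a congruent pair of roots, $A$ has a finite-dimensional simple $N$, which is not projective (a nonzero finitely generated projective over the domain $A$ has positive rank, so is infinite-dimensional); one shows $\pd_A N=2$, either by continuing the explicit resolution of $N=A/(xA+yA+(z-\nu)A)$ past its first syzygy, or by noting that for squarefree $f$ the ring $A$ is Auslander--regular of dimension $2$, so the $\GKdim$-zero module $N$ has grade $2$ and $\Ext^2_A(N,A)\neq0$. Hence $\gldim A=2$ in that case. The last and most delicate case is $f$ with a multiple root $\mu$ of multiplicity $k\ge2$: here the module $A/yA$ is no longer a weight module but carries Jordan blocks of size $k$ at the weights of the string simple $S$ bounded by $\mu$, producing a non-split self-extension $0\to S\to A/yA\to S\to0$ (more generally a uniserial module of length $k$ with all factors $\cong S$). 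Because $\pd_A(A/yA)\le1$, the long exact sequence forces $\Ext^i_A(S,M)\cong\Ext^{i+1}_A(S,M)$ for all $i\ge2$, so $\pd_A S$ is either $\le2$ or $\infty$; computing that the Yoneda square of the extension class is nonzero (exactly as for the residue field of $\kk[u]/(u^2)$, whose resolution is periodic) shows $\pd_A S=\infty$ and therefore $\gldim A=\infty$. Assembling the four cases gives the stated formula. I expect the crux to be this last argument---establishing the nonvanishing of the Yoneda square (equivalently, the honest periodicity of the resolution) and tracking how $k$ interacts with the positions of the other roots of $f$ in the orbit during the syzygy computation---with the reduction of $\gldim A$ to the simple modules as the secondary technical point.
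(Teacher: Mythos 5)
The theorem you are asked to prove is not actually proved in this paper: it is quoted verbatim from Bavula \cite[Theorem 5]{bav} and Hodges \cite[Theorem 4.4]{hodges}, so there is no ``paper's proof'' to compare against. Your outline is in the spirit of Bavula's approach (weight modules, orbit combinatorics, self-extensions in the multiple-root case), but as written it has two genuine gaps.

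The first is the reduction $\gldim A = \sup\{\pd_A S : S\ \text{simple}\}$. Your d\'evissage rests on the assertion that the $\kk[z]$-torsion-free quotient of a finitely generated module ``becomes a module over the hereditary localisation $A[x^{-1}]$.'' This is false: $A$ itself is $\kk[z]$-torsion-free and $x$ is not invertible on it. What is true is that a $\kk[z]$-torsion-free module embeds in its localisation at $\{x^n\}$, but the cokernel of that embedding is again $\kk[z]$-torsion (and typically not finitely generated), so the argument is circular rather than a two-step filtration. The torsion part also fails to be controlled by simples in the way you claim: a finitely generated $\kk[z]$-torsion graded module such as $A/zA$ has one-dimensional graded pieces in infinitely many degrees, hence is not of finite length, so it has no finite filtration by weight simples. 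Reducing global dimension to the simples here needs a genuine change-of-rings argument (bounding $\pd_A M$ by $\pd_{A[x^{-1}]}(M\otimes_A A[x^{-1}])$ together with the projective dimension of the torsion) rather than a na\"ive d\'evissage; this is the core technical step in Bavula's proof and cannot be waved through.

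The second gap is the one you flag yourself: the nonvanishing of the Yoneda square of the self-extension $0\to S\to A/yA\to S\to 0$ in the multiple-root case. The analogy with $\kk[u]/(u^2)$ is suggestive but is not a proof, because there the periodicity of the minimal resolution is a consequence of the ring being self-injective local artinian; none of that is available for $A$, and one must exhibit a nonzero class in $\Ext^2_A(S,S)$ (or show the minimal resolution of $S$ is honestly periodic) by an explicit computation from the GWA relations. Relatedly, you have not verified that $A/yA$ really is the uniserial length-$k$ self-extension of $S$ you want --- with the grading and shift conventions of this paper, one must check which shifts of which simples occur, and that the extension does not split. Finally, your alternate route in the congruent-root case --- invoking Auslander regularity of dimension $2$ --- already contains $\gldim A = 2$ as a hypothesis and therefore cannot be used to establish it; the explicit resolution of the finite-dimensional simple must actually be carried out.
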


As in \cite{wonpic}, in this paper, we study GWAs $A(f)$ for quadratic polynomials $f$. As noted in \cite{hodges}, these algebras are infinite dimensional primitive factors of the enveloping algebra of $\mathfrak{sl}(2,\kk)$. Without loss of generality, $f = z(z+\m)$. When $\m = 0$, since $f$ has a multiple root, we say we are in the \emph{multiple root case}. When $\m \in \NNp $, we say that $f$ has congruent roots and we are in the \emph{congruent root case}. Finally, when $\m \in \kk \setminus \ZZ$, we say that $f$ has distinct non-congruent roots and refer to this case as the \emph{non-congruent root case}. When the case is clear from context, we call $A(f)$ simply $A$.

\subsection{The graded module category $\gr A$}
\label{catgrA}
Let $f = z(z+\m )$. It is straightforward to classify the simple graded right $A(f)$-modules (see \cite[Lemma 3.1]{wonpic}).
\begin{enumerate}
\item \label{case.double} If $\m = 0$, then up to graded isomorphism the graded simple $A(f)$-modules are:
\begin{itemize}
\item $X^f = A/(x,z)A$ and its shifts $X^f \s{n}$ for each $n \in \ZZ$;
\item $Y^f = \left(A/(y,z-1)A\right) \s{1}$ and its shifts $Y^f \s{n}$ for each $n \in \ZZ$;
\item $M^f_\lambda = A/(z+\lambda)A$ for each $\lambda \in \kk \setminus \ZZ $.
\end{itemize}

\item If $\m \in \NNp$, then up to graded isomorphism the graded simple $A(f)$-modules are:
\begin{itemize}
\item $X^f = \left(A/(x,z+\m )A\right) \s{-\m }$ and its shifts $X^f \s{n}$ for each $n \in \ZZ$;
\item $Y^f = \left(A/(y,z-1)A\right) \s{1}$ and its shifts $Y^f \s{n}$ for each $n \in \ZZ$;
\item $Z^f = A/(y^\m , x, z)A$ and its shift $Z^f \s{n}$ for each $n \in \ZZ$;
\item $M^f_\lambda = A/(z+\lambda)A$ for each $\lambda \in \kk \setminus \ZZ $.
\end{itemize}

\item If $\m \in \kk \setminus \ZZ$, then up to graded isomorphism the graded simple $A(f)$-modules are:
\begin{itemize}
\item $X^f_0 = A/(x,z)A$ and its shifts $X^f_0\s{n}$ for each $n \in \ZZ$;
\item $Y^f_0 = \left(A/(y,z-1)A \right) \s{1}$ and its shifts $Y^f_0\s{n}$ for each $n \in \ZZ$;
\item $X^f_\m = A/(x,z+\m )A$ and its shifts $X^f_\m \s{n}$ for each $n \in \ZZ$;
\item $Y^f_\m = \left(A/(y,z+\m -1)A\right) \s{1}$ and its shifts $Y^f_\m \s{n}$ for each $n \in \ZZ$;
\item $M^f_\lambda = A/(z+\lambda)A$ for each $\lambda \in \kk \setminus (\ZZ \cup \ZZ +\m )$.
\end{itemize}
\end{enumerate}

\begin{remark}When $f$ is clear from context, we suppress the superscript on these simple modules and call them $X$, $X_0$, $X_\m$, $Y$, $Y_0$, $Y_\m$, $Z$, and $M_\lambda$.\end{remark}

\begin{remark}We remark that when $\m = 0$ (case \ref{case.double} above), the ``picture" of $\gr A(f)$ looks superficially like that of $\gr A_1$ in Figure~\ref{fig.grA}. Though the simple modules are in bijection, the categories are certainly not equivalent. By \cite[Lemma 3.14]{wonpic}, $X^f$ and $Y^f$ have self-extensions in this case, while by \cite[Lemma 4.3]{sierra}, the corresponding modules in $\gr A_1$ do not.
\end{remark}

In \cite[\S 3.4]{wonpic}, the author studied the subcategory of $\gr A$ consisting of the rank one projectives. Each rank one projective in $\gr A$ is isomorphic to a graded submodule of $\QgrA = \kk(z)[x^{\pm 1}; \sigma]$, the graded quotient ring of $A$. Since every module in $\gr A$ has a natural left $\kk[z]$-action, making it a $\kk[z]$-$A$-bimodule, if $I$ is any graded submodule of $\QgrA$, we can write 
\[ I = \bigoplus_{i\in \ZZ}(a_i)x^i
\]
where $(a_i)$ denotes the $\kk[z]$-submodule of $\kk(z)$ generated by $a_i$.
We define the \emph{structure constants} of $I$ to be the sequence $\{c_i \mid i \in \ZZ\}$ where $c_i = a_i a_{i+1}^{-1}$. Because $I$ is an $A$-module, for each $i \in \ZZ$, $c_i \in \kk[z]$ and $c_i \mid \sigma^{i}(f)$. The author showed that a graded submodule of $\QgrA$ is determined up to isomorphism by its structure constants, and the properties of $I$ can be deduced from its structure constants.

Further, for a \emph{projective} graded submodule of $\QgrA$, $P$, more can be said about what simple modules $P$ surjects onto.

\begin{lemma}[{\cite[Corollary 3.33]{wonpic}}]\label{projXYZ} Let $P$ be a rank one graded projective $A$-module. Let $n \in \ZZ$.
\begin{itemize}\item If $\m = 0$, then $P$ surjects onto exactly one of $X\s{n}$ and $Y\s{n}$.
\item If $\m \in \NNp$, then $P$ surjects onto exactly one of $X\s{n}$, $Y\s{n}$, and $Z\s{n}$.
\item If $\m \in \kk \setminus \ZZ$, then $P$ surjects onto exactly one of $X_0\s{n}$ and $Y_0\s{n}$. Likewise, P surjects onto exactly one of $X_{\m}\s{n}$ and $Y_{\m}\s{n}$.
\end{itemize}
\end{lemma}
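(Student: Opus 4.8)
The plan is to reduce the statement to a divisibility condition on the structure constants of $P$. Realize $P$ as a graded submodule $P = \bigoplus_{i\in\ZZ}(a_i)x^i \subseteq \QgrA$, with structure constants $c_i = a_i a_{i+1}^{-1}\in\kk[z]$, so that $c_i\mid\sigma^i(f) = (z+i)(z+i+\m)$. Since each of $X\s{n}$, $Y\s{n}$, $Z\s{n}$ (and $M_\lambda$) is simple, ``$P$ surjects onto $S$'' is equivalent to $\Hom_{\gr A}(P,S)\neq 0$; so the task is to decide, for each integer $n$, which of the Hom-spaces attached to the simples $X\s{n}, Y\s{n}$ (and $Z\s{n}$ when $\m\in\NNp$; resp.\ the two pairs when $\m\notin\ZZ$) is nonzero.

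The heart of the argument is the computation of these Hom-spaces in terms of the $c_i$. Here I would use the presentations of the simple modules from the classification: the $X$-type simples are bounded above and cyclic on their top graded piece, the $Y$-type are bounded below and cyclic on their bottom piece, and $Z$ is finite-dimensional. A degree-zero graded homomorphism $\varphi\colon P\to S$ is a compatible family of right-$\kk[z]$-linear maps $\varphi_i\colon P_i\to S_i$; since each $S_i$ is at most one-dimensional, $\varphi$ reduces, after choosing trivializations, to a scalar recursion, and compatibility with the right actions of $x$, $y$, and $z$ forces this recursion to have a nonzero solution precisely when $c_n$ --- together with $c_{n-\m}$ in the congruent root case, since the root $-n$ of $\sigma^n(f)$ also appears in $\sigma^{n-\m}(f)$ --- satisfies a specific divisibility by $(z+n)$. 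The one bookkeeping point to watch is that in degree $i$ the left $\kk[z]$-structure on $P_i$, through which $P_i\cong(a_i)$, differs from the right one by the twist $\sigma^i$. Calibrating against $P = A$ pins down the precise form: $A$ has $c_i = 1$ for $i\geq 0$ and $c_i = \sigma^i(f)$ for $i<0$, and one checks directly that $A$ surjects onto $X\s{n}$ for $n\geq\m$, onto $Z\s{n}$ for $0\leq n<\m$, onto $Y\s{n}$ for $n<0$, and (when $\m\notin\ZZ$) onto $X_0\s{n}$ and $X_\m\s{n}$ exactly for $n\geq 0$. Comparing, one gets the dictionary: $P$ surjects onto $Y\s{n}$ iff $(z+n)\mid c_n$; when $(z+n)\nmid c_n$, $P$ surjects onto $Z\s{n}$ if $(z+n)\mid c_{n-\m}$ and onto $X\s{n}$ otherwise; when $\m\notin\ZZ$ the two roots $0$ and $\m$ decouple because $\sigma^n(f)$ has distinct roots, so the $0$-pair is governed by divisibility of $c_n$ by $(z+n)$ and the $\m$-pair by divisibility of $c_n$ by $(z+n+\m)$; and when $\m = 0$, projectivity forces $c_n\in\{1,(z+n)^2\}$, so only the options $X\s{n}$, $Y\s{n}$ survive.

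Granting the dictionary, the statement is pure bookkeeping: for fixed $n$, the condition ``$(z+n)\mid c_n$'' and, within its complement, the condition ``$(z+n)\mid c_{n-\m}$'' are mutually exclusive and jointly exhaustive among the listed simples, and the condition on $(z+n+\m)$ does the same for the second pair in the non-congruent case --- so $P$ surjects onto exactly one of them. I expect the main obstacle to be the Hom-computation of the second paragraph: solving the twisted scalar recursion correctly, and, in the congruent root case, verifying that the quotient singled out when $(z+n)\nmid c_n$ but $(z+n)\mid c_{n-\m}$ is precisely the finite-dimensional $Z\s{n}$ rather than $X\s{n}$ or $Y\s{n}$. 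I would also want to confirm the projectivity criterion being invoked (that $\gcd(c_i,\sigma^i(f)/c_i) = 1$ for every $i$), since that is what restricts the allowed $c_i$, most consequentially in the multiple-root case.
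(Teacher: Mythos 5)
The statement is cited from the companion paper \cite{wonpic} (as Corollary~3.33 there) and is not proved in the present paper, so there is no in-paper argument for you to match. Your overall plan---reduce to structure constants $c_i \mid \sigma^i(f)$ of the embedding $P \subseteq \QgrA$, use a projectivity restriction on the $c_i$, and read off which simple modules receive a nonzero map from $P$---is consistent with the machinery summarized in \S 2.3 and used in \S 4: in particular the multiple-root half of your dictionary ($c_i = 1 \Leftrightarrow X\s{i}$, $c_i = (z+i)^2 \Leftrightarrow Y\s{i}$) is exactly what the proof of Theorem~\ref{qgrequivgr} quotes from \cite[Lemma~3.10]{wonpic}, and your calibration at $P = A$ agrees with the support computations for $X$, $Y$, $Z$.

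However, you have asserted the dictionary rather than derived it, and the derivation \emph{is} the lemma; you acknowledge this, which is fair, but there is a concrete gap your sketch does not address in the congruent-root case. The conditions ``$(z+n)\mid c_n$'' and ``$(z+n)\mid c_{n-\m}$'' give four a priori cases at each $n$, and your dictionary sends both cases with $(z+n)\mid c_n$ to $Y\s{n}$. The table in the proof of Theorem~\ref{qgrequivgr} (drawn from \cite[Table~1]{wonpic}, and valid for projectives with no factor of $Z$) shows that for such projectives the two conditions are \emph{equivalent}, so the mixed case $(z+n)\mid c_n$, $(z+n)\nmid c_{n-\m}$ is not covered by that table, and your calibration at $P=A$ only exhibits the \emph{other} mixed case (with $0 \le n < \m$, where the answer is $Z\s{n}$). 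It is therefore not obvious from what you have written whether the remaining mixed case yields $Y\s{n}$, yields $Z\s{n}$, or cannot occur for a rank one graded projective at all; the ``twisted scalar recursion'' you defer is precisely what would decide this, and until it (and the precise form of the projectivity criterion on the $c_i$) is carried out, the trichotomy is not established.
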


\begin{notation} By the above result, we denote by $F_j(P)$ the unique element of $\{X\s{j}, Y\s{j}, Z\s{j}\}$ which is a factor of $P$.
\end{notation}


Given a rank one projective module $P$ with structure constants $\{c_i\}$, we recall that there is a \emph{canonical representation of $P$} given by
\[ \bigoplus_{i \in \ZZ} (p_i) x^i \subseteq Q_{\mathrm{gr}}(A).
\]
where $p_n = \prod_{j \geq n} c_j$. There is an isomorphism $\bigoplus_{i \in \ZZ} (p_i) x^i \cong P$ and we call this module a \emph{canonical rank one projective module}.

Finally, to understand the full subcategory of rank one projective modules, we must also understand the morphisms between them. In \cite[Proposition 3.38]{wonpic}, the author showed that if $P$ and $Q$ are finitely generated graded rank one projective $A$-modules embedded in $\QgrA$, then there is some $\theta \in \kk(z)$ such that $\Hom_{\gr A}(P,Q) = \theta \kk[z]$, acting by left-multiplication. We call $\theta$ a \emph{maximal embedding} $P\sra Q$. For canonical rank one projectives, the author computed this maximal embedding explicitly in \cite[Lemma 3.40]{wonpic}.


\subsection{The Picard group of $\gr A$}
For a graded module category $\gr A$, we denote the group of autoequivalences of $\gr A$ by $\Aut (\gr A)$. The \emph{Picard group of $\gr A$}, $\Pic(\gr A)$ is the group of autoequivalences of $\gr A$ modulo natural isomorphism. In \cite{wonpic}, the author showed that for a GWA $A(f)$ defined by quadratic polynomial $f \in \kk[z]$, $\Pic(\gr A(f)) \cong \ZZ_\fin \rtimes D_{\infty}$. 

We recall that in the congruent or double root case, an autoquivalence $\FFF$ is called \emph{numerically trivial} if $\{ \FFF(X\s{n}), \FFF(Y\s{n}) \} = \{ X \s{n}, Y \s{n} \}$. Similarly, in the non-congruent root case, $\FFF$ is called \emph{numerically trivial} if $\{ \FFF(X_0\s{n}), \FFF(Y_0\s{n}) \} = \{ X_0 \s{n}, Y_0 \s{n} \}$ and $\{ \FFF(X_\m \s{n}), \FFF(Y_\m \s{n}) \} = \{ X_\m \s{n}, Y_\m \s{n} \}$. The subgroup of $\Pic(\gr A)$ isomorphic to $\ZZ_\fin$ is generated by numerically trivial autoequivalences called \emph{involutions}. 

First, let $\m \in \NN$. In \cite[Proposition 5.13]{wonpic}, numerically trivial autoequivalences $\iota_j$ of $\gr A$ are constructed for each $j \in \ZZ$ such that $\iota_j(X \s{j}) \cong Y \s{j}$, $\iota_j(Y\s{j}) \cong X \s{j}$, and $\iota_j(S) \cong S$ for all other simple modules $S$.We define the autoequivalence
\[ \iota_J = \prod_{j \in J} \iota_j,
\]
with $\iota_{\emptyset} = \Id_{\gr A}$. Each $\iota_J$ is a subfunctor of $\Id_{\gr A}$, so acts on morphisms by restriction. In particular, for a rank one projective $P$, $\iota_j P$ is given by the kernel of a morphism to a particular indecomposable module. 

Analogous involutions exist in the non-congruent root case, as well (see \cite[Proposition 5.14]{wonpic}). Let $\m  \in \kk \setminus \ZZ$. Then for any $j \in \ZZ$, there is a numerically trivial autoequivalence $\iota_{(j, \emptyset)}$ of $\gr A$ such that $\iota_{(j, \emptyset)}(X_0 \s{j}) \cong Y_0 \s{j}$, $\iota_{(j, \emptyset)}(Y_0 \s{j}) \cong X_0\s{j}$, and $\iota_{(j, \emptyset)}(S) \cong S$ for all other simple modules $S$. Similarly, for any $j \in \ZZ$, there is a numerically trivial autoequivalence $\iota_{(\emptyset, j)}$ of $\gr A$ such that $\iota_{(\emptyset, j)}(X_\m \s{j}) \cong Y_\m \s{j}$, $\iota_{(\emptyset, j)}(Y_\m \s{j}) \cong X_\m\s{j}$, and $\iota_{(\emptyset, j)}(S) \cong S$ for all other simple modules $S$.

For each $(J,J') \in \ZZ_\fin \times \ZZ_\fin$, we define the autoequivalence
\[ \iota_{(J,J')} = \prod_{j \in J} \iota_{(j,\emptyset)} \prod_{j \in J'} \iota_{(\emptyset, j)},
\]
with $\iota_{(\emptyset, \emptyset)} = \Id_{\gr A}$. Each $\iota_{(J,J')}$ is a subfunctor of $\Id_{\gr A}$. As before, for a rank one projective $P$, both $\iota_{(\emptyset, j)} P$ and $\iota_{(j, \emptyset)} P$ are given by the kernel of a homomorphism to certain indecomposable modules.

\section{Defining rings from autoequivalences of categories}
\label{autoequivrings}
\subsection{Functors via bigraded bimodules}

In \cite{delrio}, Angel del R\'{i}o studies equivalences between graded module categories over graded rings. We establish notation in order to use del R\'{i}o's result, closely following the treatment found in the discussion before \cite[Theorem 5.13]{smith}. Let $R$ and $S$ be $\kk$-algebras graded by the abelian groups $\Gamma$ and $G$, respectively. Following the definition of del R{\'i}o in \cite{delrio}, we define a \emph{bigraded $R$-$S$-bimodule} to be an $R$-$S$-bimodule $P$ with a $\kk$-vector space decomposition
\[ P = \bigoplus_{(\gamma, g) \in \Gamma \times G} P_{(\gamma, g)}
\] 
that respects the graded structure of $R$ on the left and $S$ on the right. That is, for any $\gamma, \delta \in \Gamma$ and any $g, h \in G$,
\[ R_{\gamma} \cdot P_{(\delta, h)} \cdot S_{g} \subseteq P_{(\gamma + \delta, g+h)}.
\]
When we want to specify the degrees of an element $p \in P_{(\gamma, g)}$, we use the notation ${}^\gamma p^g$.

For any $\gamma \in \Gamma$ we have the $G$-graded right $S$-module
\[ P_{(\gamma, *)} = \bigoplus_{g \in G} P_{(\gamma, g)}.
\]
Note that if $r \in R_{\delta}$, then multiplication by $r$ is an $S$-module homomorphism $P_{(\gamma, *)} \sra P_{(\gamma + \delta, *)}$ that preserves $G$-degree and hence we get a $\kk$-linear map
\begin{equation} \label{Rgammamap} \varphi: R_{\delta} \sra \Hom_{\grS}(P_{(\gamma,*)}, P_{(\gamma + \delta, *) } ).
\end{equation}

We now define a functor $H_S(P, -): \grS \sra \grR$. If $M$ is a $G$-graded right $S$-module, let
\[ H_S(P,M) = \bigoplus_{\gamma \in \Gamma} \Hom_{\grS}(P_{(-\gamma,*)}, M).
\]
First, note that $H_S(P,M)$ is $\Gamma$-graded. The right $R$-module structure is given as follows : given $h \in \Hom_{\grS}(P_{(-\gamma, *)}, M)$ and $r \in R_{\delta}$, recall that as in equation \eqref{Rgammamap}, multiplication by $r$ gives a $G$-graded $S$-module homomorphism $\varphi(r): P_{(-\gamma - \delta, *)} \sra P_{(-\gamma,*)}$. Then
\[ h\cdot r = h\circ \varphi(r) \in \Hom_{\grS}(P_{(-\gamma - \delta, *) } , M ).
\]
We obtain a functor
\begin{equation} \label{HSfunctor} H_S(P,-): \grS \sra \grR
\end{equation}
by defining $H_S(P,-)$ on a morphism $h:M\sra N$ to be composition with $h$. In his discussion before \cite[Proposition 2]{delrio}, del R{\'i}o notes that $H_S(P,-)$ is naturally isomorphic to the functor he denotes $(-)_*^P$. 

In the subsequent sections we will attempt to construct a graded commutative ring $B$ and a bigraded $B$-$A$-bimodule $P$, and then use del R{\'i}o's theorem to prove that $H_{A(f)}(P,-)$ is an equivalence of categories. In the cases of a multiple root or distinct non-congruent roots, we will be able to construct such a ring and bimodule. In the case of congruent roots, we will pass to the quotient category $\qgr A(f)$ obtained by taking $\gr A(f)$ modulo its full subcategory of finite-dimensional modules. We then show that $\qgr A(f) \equiv \grB$ for a commutative ring $B$. In the next section, we develop machinery which constructs $\Gamma$-graded rings $R$ from autoequivalences in the Picard group of $\gr (S, G)$. 

\subsection{Subgroups of autoequivalences}
\label{ringconstr}
Let $G$ be an abelian group and let $S$ be a $G$-graded $\kk$-algebra. Suppose we have a subgroup $\Gamma \subseteq \Pic(\gr (S,G))$ and for each $\gamma \in \Gamma$, choose one autoequivalence in the equivalence class of $\gamma$, $\FFF_{\gamma} \in \Aut(\gr (S,G))$. Since $\Pic(\gr (S,G))$ is the group $\Aut (\gr(S,G))$ modulo natural isomorphism, we have, for all $\gamma, \delta \in \Gamma$
\[\FFF_{\gamma} \FFF_{\delta} \cong \FFF_{\delta} \FFF_{\gamma} \cong \FFF_{\gamma + \delta}.\]
Choose a natural isomorphism $\eta_{\gamma, \delta}$ from $\FFF_\gamma \FFF_\delta$ to $\FFF_{\gamma + \delta}$ and let $\Theta_{\gamma, \delta}$ be $\eta_{\gamma, \delta}$ at $S$. Then $\Theta_{\gamma, \delta}$ is a $G$-graded $S$-module isomorphism $\FFF_{\gamma} \FFF_{\delta} S \sra \FFF_{\gamma + \delta} S$. Motivated by Smith's construction in \cite[\S 10]{smith}, we can define a $\Gamma$-graded ring $R$ if the isomorphisms $\Theta_{\gamma, \delta}$ satisfy the following condition: for all $\gamma, \delta, \epsilon \in \Gamma$ and for all $\varphi \in \Hom_{\gr (S,G)}(S, \FFF_\gamma S)$

\begin{equation} \label{thetacond} \Theta_{\epsilon, \gamma + \delta} \circ \FFF_{\epsilon}(\Theta_{\delta, \gamma}) \circ \FFF_{\epsilon} \FFF_{\delta} (\varphi) = \Theta_{\delta+ \epsilon, \gamma} \circ \FFF_{\delta+ \epsilon}(\varphi) \circ \Theta_{\epsilon, \delta}.
\end{equation}
Morally, this says that the map $\Aut (\gr(S,G)) \times \Aut (\gr(S,G)) \sra \Aut (\gr(S,G))$ mapping $(\FFF_{\gamma} , \FFF_{\delta})$ to $\FFF_{\gamma + \delta}$ is associative, although it is a weaker condition, since the isomorphisms $\Theta_{\gamma,\delta}$ are only at the module $S_S$.

\begin{proposition} \label{autoring} Assume the setup and notation above. If the autoequivalences $\{\FFF_{\gamma} \mid \gamma \in \Gamma\}$ and isomorphisms $\{\Theta_{\gamma,\delta} \mid \gamma,\delta \in \Gamma\}$ satisfy condition~\eqref{thetacond} then
\[ R = \bigoplus_{\gamma \in \Gamma} \Hom_{\gr (S,G)}(S, \FFF_{\gamma} S)
\]
is an associative $\Gamma$-graded ring with multiplication defined as follows. For $\varphi \in R_{\gamma}$ and $\psi \in R_{\delta}$ set
\[ \varphi \cdot \psi = \Theta_{\delta, \gamma} \circ \FFF_{\delta} (\varphi) \circ \psi.
\]
\end{proposition}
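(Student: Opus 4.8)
The plan is to verify directly that the multiplication $\varphi \cdot \psi = \Theta_{\delta,\gamma} \circ \FFF_\delta(\varphi) \circ \psi$ is well-defined, $\Gamma$-graded, and associative; the existence of a unit is a secondary point. First I would check that the formula makes sense: for $\varphi \in R_\gamma = \Hom_{\gr(S,G)}(S, \FFF_\gamma S)$ and $\psi \in R_\delta = \Hom_{\gr(S,G)}(S, \FFF_\delta S)$, applying $\FFF_\delta$ to $\varphi$ gives a morphism $\FFF_\delta S \to \FFF_\delta \FFF_\gamma S$; precomposing with $\psi: S \to \FFF_\delta S$ yields $S \to \FFF_\delta\FFF_\gamma S$; and then $\Theta_{\delta,\gamma}: \FFF_\delta\FFF_\gamma S \to \FFF_{\delta+\gamma}S$ lands us in $\Hom_{\gr(S,G)}(S, \FFF_{\gamma+\delta}S) = R_{\gamma+\delta}$, as required (using commutativity of $\Gamma$). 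This also shows $R_\gamma \cdot R_\delta \subseteq R_{\gamma+\delta}$, so the multiplication respects the grading. Bilinearity over $\kk$ is immediate since $\FFF_\delta$ and composition are $\kk$-linear.

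The heart of the matter is associativity. Take $\varphi \in R_\gamma$, $\psi \in R_\delta$, $\chi \in R_\epsilon$. I would expand both $(\varphi\cdot\psi)\cdot\chi$ and $\varphi\cdot(\psi\cdot\chi)$ using the definition. On the one hand,
\[
(\varphi\cdot\psi)\cdot\chi = \Theta_{\epsilon,\gamma+\delta}\circ\FFF_\epsilon(\varphi\cdot\psi)\circ\chi = \Theta_{\epsilon,\gamma+\delta}\circ\FFF_\epsilon\bigl(\Theta_{\delta,\gamma}\circ\FFF_\delta(\varphi)\circ\psi\bigr)\circ\chi,
\]
and since $\FFF_\epsilon$ is a functor this equals $\Theta_{\epsilon,\gamma+\delta}\circ\FFF_\epsilon(\Theta_{\delta,\gamma})\circ\FFF_\epsilon\FFF_\delta(\varphi)\circ\FFF_\epsilon(\psi)\circ\chi$. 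On the other hand,
\[
\varphi\cdot(\psi\cdot\chi) = \Theta_{\delta+\epsilon,\gamma}\circ\FFF_{\delta+\epsilon}(\varphi)\circ(\psi\cdot\chi) = \Theta_{\delta+\epsilon,\gamma}\circ\FFF_{\delta+\epsilon}(\varphi)\circ\Theta_{\epsilon,\delta}\circ\FFF_\epsilon(\psi)\circ\chi.
\]
Both expressions have the common right-hand factor $\FFF_\epsilon(\psi)\circ\chi$, so it suffices to show that the two morphisms $\FFF_\epsilon\FFF_\delta S \to \FFF_{\gamma+\delta+\epsilon}S$ obtained by stripping off that factor agree, i.e.
\[
\Theta_{\epsilon,\gamma+\delta}\circ\FFF_\epsilon(\Theta_{\delta,\gamma})\circ\FFF_\epsilon\FFF_\delta(\varphi) = \Theta_{\delta+\epsilon,\gamma}\circ\FFF_{\delta+\epsilon}(\varphi)\circ\Theta_{\epsilon,\delta}.
\]
But this is exactly condition~\eqref{thetacond}, applied to the morphism $\varphi \in \Hom_{\gr(S,G)}(S, \FFF_\gamma S)$. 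So associativity is precisely what~\eqref{thetacond} was designed to encode, and the verification is just careful bookkeeping with functoriality of the $\FFF_\gamma$ and the fact that composition is associative.

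The main (minor) obstacle is the unit. The natural candidate for $1_R$ is $\Theta_{0,0}^{-1}\circ\mathrm{id}_S$ sitting inside $R_0 = \Hom_{\gr(S,G)}(S, \FFF_0 S)$, where one should first note that $\FFF_0 \cong \Id_{\gr(S,G)}$ and that, after possibly adjusting the choice of $\FFF_0$ and the $\eta_{0,\delta}$, one may take $\FFF_0 = \Id$ and the relevant $\Theta$'s to be compatible with the identity — or simply invoke a standard coherence argument (as in a monoidal category, the unit constraints can be normalized). With that normalization, $\Theta_{0,\gamma}$ and $\Theta_{\gamma,0}$ act as the canonical identifications of $\FFF_0\FFF_\gamma S$ and $\FFF_\gamma\FFF_0 S$ with $\FFF_\gamma S$, and a direct check using~\eqref{thetacond} with one index set to $0$ shows $1_R \cdot \psi = \psi = \psi \cdot 1_R$. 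Thus $R$ is an associative unital $\Gamma$-graded $\kk$-algebra, as claimed. I would, following Smith's treatment, either carry out this normalization explicitly at the start or remark that it is harmless and suppress it.
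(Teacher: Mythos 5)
Your proof is correct and follows essentially the same route as the paper: expand both associated products $(\varphi\cdot\psi)\cdot\xi$ and $\varphi\cdot(\psi\cdot\xi)$ using functoriality of $\FFF_\epsilon$, factor out the common $\FFF_\epsilon(\psi)\circ\xi$, and observe that the residual equality is exactly condition~\eqref{thetacond}. The paper's own proof is considerably terser---it records only the two expanded expressions and invokes~\eqref{thetacond}, omitting the well-definedness, grading, and unit discussions that you supply; your caveat about normalizing $\FFF_0$ and the $\Theta_{0,\cdot}$ to get a unit is a reasonable point that the paper tacitly leaves to the reader.
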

\begin{proof} We need to check that the multiplication defined above is associative. It suffices to check on homogeneous elements, so let $\varphi, \psi, \xi \in R$ be homogeneous elements of degree $\gamma$, $\delta$, and $\epsilon$, respectively. Then, by definition
\[ (\varphi \cdot \psi) \cdot \xi = \Theta_{\epsilon, \gamma + \delta} \circ \FFF_{\epsilon} (\Theta_{\delta,\gamma}) \circ \FFF_{\epsilon} \FFF_{\delta} (\varphi) \circ \FFF_{\epsilon}(\psi) \circ \xi
\]
and 
\[ \varphi \cdot (\psi \cdot \xi )= \Theta_{\delta+ \epsilon, \gamma} \circ \FFF_{\delta+ \epsilon}(\varphi) \circ \Theta_{\epsilon, \delta} \circ \FFF_{\epsilon}(\psi) \circ \xi.
\]
Since we assumed the isomorphisms satisfied condition~\eqref{thetacond}, $R$ is associative.
\end{proof}

If, for example, $S$ is a $\ZZ$-graded ring and we take $\Gamma = \ZZ$ generated by the shift functor $\SSS$ on $\gr S$, then $\SSS^n \SSS^m = \SSS^{n+m}$ so the isomorphisms $\Theta_{n,m}$ are trivial and condition~\eqref{thetacond} is automatic. By the construction in Proposition~\ref{autoring}, we recover our original ring as $R \cong S$. Since we found (in \cite[Propositions 5.13 and 5.14]{wonpic}) that for a GWA $A(f)$ defined by a quadratic polynomial $f$, the category $\gr A(f)$ has many autoequivalences, we can choose a more interesting subgroup of autoequivalences to define such a ring. 

Given rings $R$ and $S$ as in Proposition~\ref{autoring}, condition~\eqref{thetacond} also allows us to define a bigraded $R$-$S$-bimodule $P$. Let
\[ P = \bigoplus_{\gamma \in \Gamma} \FFF_{\gamma} S.
\]
The $G$-graded right $S$-module structure on each $\FFF_{\gamma} S$ gives $P$ a $G$-graded right $S$-module structure. Let $\varphi \in R_{\gamma}$ and $p \in P_{(\delta,*)} = \FFF_{\delta} S$. Then $P$ has a $\Gamma$-graded left $R$-module structure given by
\begin{equation}\label{RactionP} \varphi \cdot p = \left[ \Theta_{\delta,\gamma} \circ \FFF_{\delta}(\varphi)\right] (p) \in P_{(\gamma+\delta,*)}.
\end{equation}
For $\varphi \in R_{\gamma}$, $\psi \in R_{\delta}$,
\[ (\psi \cdot \varphi) \cdot p = \left[\Theta_{\epsilon, \gamma + \delta} \circ \FFF_{\epsilon}(\Theta_{\delta, \gamma}) \circ \FFF_{\epsilon} \FFF_{\delta} (\varphi) \circ \FFF_{\epsilon}(\psi)\right](p)
\]
while
\[
\psi \cdot(\varphi \cdot p) = \left[\Theta_{\delta+ \epsilon, \gamma} \circ \FFF_{\delta+ \epsilon}(\varphi) \circ \Theta_{\epsilon, \delta} \circ \FFF_{\epsilon}(\psi) \right](p).
\]
and since we assumed condition~\eqref{thetacond}, therefore equation~\eqref{RactionP} gives $P$ a $\Gamma$-graded left $R$-module structure. Since $\FFF_{\delta}$ is an autoequivalence of $\gr (S,G)$ and $\Theta_{\delta,\gamma}$ is a $G$-graded $S$-module isomorphism, it is easily checked that this makes $P$ a bigraded $R$-$S$-bimodule. This extra left $R$-module structure makes $H_S(P,P)$ a bigraded $R$-$R$-bimodule.

As a graded ring, $R$ has its usual $R$-$R$-bimodule structure. We define the bigraded $R$-$R$-bimodule 
\[\hat{R} = \bigoplus_{\gamma \in \Gamma} \bigoplus_{\delta \in \Gamma} \hat{R}_{(\gamma,\delta)} = \bigoplus_{\gamma \in \Gamma} \bigoplus_{\delta \in \Gamma} R_{\gamma + \delta} = \bigoplus_{\gamma \in \Gamma} \bigoplus_{\delta \in \Gamma} \Hom_{\gr (S,G)}(S, \FFF_{\gamma + \delta} S)
\] 
and note that there exists a canonical homomorphism of bigraded $R$-$R$-bimodules
\[ \varrho_R^P: \hat{R} \ra H_S(P,P) = \bigoplus_{\gamma \in \Gamma} \Hom_{\gr (S,G)}\left(\FFF_{-\gamma} S, \bigoplus_{\delta \in \Gamma} \FFF_{\delta} S \right) 
\]
where $\varrho_R^P$ maps the element ${}^{\gamma}\varphi^{\delta}$ to the homomorphism which maps $p \in \FFF_{-\gamma} S$ to $\varphi \cdot p \in \FFF_{\delta} S$, where $\varphi$ acts as in equation~\eqref{RactionP}, and maps all other homogeneous elements to $0$. This map $\varrho_R^P$ is the same as the one del R{\'i}o calls $\varrho_A^P$ in \cite[Lemma 5]{delrio}, though del R{\'i}o's bigraded bimodule's module structures are on opposite sides.

\begin{proposition}\label{RSequiv} Assume the setup and notation above. Suppose that the autoequivalences $\{\FFF_{\gamma} \mid \gamma \in \Gamma\}$ and isomorphisms $\{\Theta_{\gamma,\delta} \mid \gamma,\delta \in \Gamma\}$ satisfy condition~\eqref{thetacond} and let $R$ and $S$ be as in Proposition~\ref{autoring}. If $P = \bigoplus_{\gamma \in \Gamma} \FFF_{\gamma} S$ is a generator of $\gr (S, G)$, then $H_S(P,-)$ gives an equivalence of categories 
\[\gr (R, \Gamma) \sra \gr (S,G).\]
\end{proposition}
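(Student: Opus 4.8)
The plan is to invoke del R\'{i}o's equivalence theorem \cite[Theorem 5]{delrio} (or \cite[Proposition 2]{delrio}), which states that for a bigraded $R$-$S$-bimodule $P$, the functor $H_S(P,-)$ (equivalently $(-)_*^P$) is an equivalence $\gr(S,G) \to \gr(R,\Gamma)$ provided that $P$ satisfies two conditions: first, that $P$ is a \emph{generator} of $\gr(S,G)$ (in an appropriate graded sense, i.e.\ the family $\{P_{(\gamma,*)}\}_{\gamma \in \Gamma}$ generates), and second, that the canonical bimodule map $\varrho_R^P : \hat R \to H_S(P,P)$ is an isomorphism. Our job is therefore to verify that, in the setup of Proposition~\ref{autoring}, both hypotheses hold: the generator hypothesis is exactly what we have assumed, so the real content is checking that $\varrho_R^P$ is an isomorphism.

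First I would set up the identifications carefully. By construction $P_{(-\gamma,*)} = \FFF_{-\gamma} S$, so
\[
H_S(P,P) = \bigoplus_{\gamma \in \Gamma} \Hom_{\gr(S,G)}\!\left(\FFF_{-\gamma} S, \bigoplus_{\delta \in \Gamma} \FFF_{\delta} S\right),
\]
and since $\Hom$ commutes with the finite-support direct sum in the second variable for finitely generated modules (or by working degree-by-degree), the $(\gamma,\delta)$ bigraded piece of $H_S(P,P)$ is $\Hom_{\gr(S,G)}(\FFF_{-\gamma} S, \FFF_{\delta} S)$. On the other side, $\hat R_{(\gamma,\delta)} = R_{\gamma+\delta} = \Hom_{\gr(S,G)}(S, \FFF_{\gamma+\delta} S)$. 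So I need to exhibit, for each pair $(\gamma,\delta)$, an isomorphism
\[
\Hom_{\gr(S,G)}(S, \FFF_{\gamma+\delta} S) \;\xrightarrow{\ \sim\ }\; \Hom_{\gr(S,G)}(\FFF_{-\gamma} S, \FFF_{\delta} S)
\]
and check that assembling these gives a bimodule map. The natural candidate is: given $\varphi : S \to \FFF_{\gamma+\delta} S$, send it to the composite $\FFF_{-\gamma} S \xrightarrow{\FFF_{-\gamma}(\psi)} \FFF_{-\gamma}\FFF_{\gamma+\delta} S \xrightarrow{\Theta} \FFF_{\delta} S$ for a suitable $\psi$ built from $\varphi$ — concretely, tracing through \eqref{RactionP}, $\varrho_R^P({}^\gamma\varphi^\delta)$ is the map $p \mapsto \Theta_{-\gamma,\gamma+\delta} \circ \FFF_{-\gamma}(\varphi)$ applied appropriately, i.e.\ $\varphi \cdot p$. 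To see this is bijective, use that $\FFF_{-\gamma}$ is an autoequivalence: it induces a bijection $\Hom(S, \FFF_{\gamma+\delta} S) \to \Hom(\FFF_{-\gamma} S, \FFF_{-\gamma}\FFF_{\gamma+\delta} S)$, and then post-composition with the isomorphism $\Theta_{-\gamma,\gamma+\delta} : \FFF_{-\gamma}\FFF_{\gamma+\delta} S \to \FFF_{\delta} S$ is a bijection onto $\Hom(\FFF_{-\gamma} S, \FFF_{\delta} S)$. Hence $\varrho_R^P$ is bijective on each bigraded component, so it is an isomorphism of $\kk$-vector spaces, and being a bimodule homomorphism (as noted in the excerpt, using condition~\eqref{thetacond} exactly as in the associativity computations for $R$ and for the $R$-action on $P$), it is an isomorphism of bigraded $R$-$R$-bimodules.

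With $\varrho_R^P$ an isomorphism and $P$ a generator by hypothesis, del R\'{i}o's theorem applies directly and yields that $H_S(P,-) : \gr(S,G) \to \gr(R,\Gamma)$ is an equivalence; inverting gives the asserted equivalence $\gr(R,\Gamma) \to \gr(S,G)$ (or one simply observes $H_S(P,-)$ is itself the desired quasi-inverse direction and restates). I expect the main obstacle to be the bookkeeping in identifying $\varrho_R^P$ with del R\'{i}o's map $\varrho_A^P$ despite the side-swap of module structures he uses, and in confirming that his ``$P$ is a generator'' hypothesis coincides with the statement ``$P = \bigoplus_\gamma \FFF_\gamma S$ is a generator of $\gr(S,G)$'' used here — i.e.\ that finitely-generatedness and the passage between $P$ as a single $G$-graded object versus the $\Gamma$-indexed family $P_{(\gamma,*)}$ are handled correctly. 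The isomorphism-checking itself is essentially formal once the autoequivalence property and the coherence condition~\eqref{thetacond} are in hand; the subtlety is purely in matching conventions with \cite{delrio}.
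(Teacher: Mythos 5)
Your proposal matches the paper's argument: both cite del R\'{i}o's equivalence theorem and then reduce the problem to showing that $\varrho_R^P$ is an isomorphism, which you verify component-by-component exactly as the paper does, using that $\FFF_{-\gamma}$ is an autoequivalence to get a bijection on Hom-sets and that $\Theta_{-\gamma,\gamma+\delta}$ is an isomorphism $\FFF_{-\gamma}\FFF_{\gamma+\delta}S \to \FFF_\delta S$. The one small point the paper makes explicit that you elide is that del R\'{i}o's theorem requires $P$ to be a \emph{projective} generator, which is automatic here because each $\FFF_\gamma$ is an autoequivalence so $\FFF_\gamma S$ is projective; it is worth stating so that ``generator'' alone visibly suffices.
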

\begin{proof}This follows immediately from \cite[Theorem 7(c)]{delrio} as long as $P$ is a projective generator of $\gr (S,G)$ and $\varrho_R^P$ is an isomorphism. Since each $\FFF_{\gamma}$ is an autoequivalence, $\FFF_{\gamma}S$ is automatically projective so if $P$ is a generator then it is a projective generator. Hence, we need only show that $\varrho_R^P$ is an isomorphism. Assuming the setup above, recall that for an element
\[{}^\gamma \varphi^\delta \in {}_{\gamma} R_{\delta} = \Hom_{\gr (S,G)}(S, \FFF_{\gamma + \delta} S)\]
$\varrho_R^P(\varphi)$ is given by the homomorphism $\Theta_{\delta,\gamma} \circ \FFF_{\delta}(\varphi): \FFF_{-\gamma} S \sra \FFF_{\delta} S$. Now since $\FFF_{-\gamma}$ is an autoequivalence it gives an isomorphism
\[\Hom_{\gr (S,G)}( S, \FFF_{\gamma + \delta} S) \cong \Hom_{\gr (S,G)}( \FFF_{-\gamma}S,\FFF_{-\gamma} \FFF_{\gamma + \delta} S)
\]
and $\Theta_{-\gamma,\gamma+\delta}$ gives an isomorphism $\FFF_{-\gamma}\FFF_{\gamma+\delta}S \sra \FFF_{\delta} S$. Hence, $\varrho_R^P$ is an isomorphism and $H_S(P,-)$ is an equivalence of categories.
\end{proof}

With this framework in place, we need only find subgroups of $\Pic(\gr (S,G))$ such that the autoequivalences $\{\FFF_{\gamma} \mid \gamma \in \Gamma\}$ satisfy condition~\eqref{thetacond} and the $\FFF_{\gamma} S$ generate $\gr (S,G)$. This machinery then yields a $\Gamma$-graded ring $R$ such that $\gr(R,\Gamma) \equiv \gr(S,G)$. For a generalized Weyl algebra $A(f)$, we will see that the involutions constructed in \cite{wonpic} often satisfy these conditions. 
By \cite[Lemma 5.17]{wonpic}, we know when the $\iota_J A$ or $\iota_{(J,J')}A$ generate $\gr A$.

\section{Generalized Weyl algebras defined by quadratic polynomials}
\label{gwaequiv}

Throughout this section, we let $A(f)$ denote the generalized Weyl algebra with base ring $\kk[z]$ automorphism $\sigma(z) = z+1$ and quadratic polynomial $f = z(z+\m )\in \kk[z]$. We divide into three cases: first we consider the multiple root case when $\m  = 0$, then the non-congruent root case when $\m  \in \kk \setminus \ZZ$, and finally the congruent root case when $\m \in \NN^+$.

\subsection{Multiple root}
\label{mthcr}

Let $\m = 0$. The autoequivalences $\{\iota_n \mid n \in \ZZ\}$ formed a subgroup of $\Pic(\gr A)$ isomorphic to $\ZZ_\fin$. By \cite[Lemma 5.17]{wonpic}, we know that the set $\{\iota_J A \mid J \in \ZZ_{\fin}\}$ generates $\gr A$. We will show that these autoequivalences satisfy condition~\eqref{thetacond}, and hence we can construct a $\ZZ_\fin$-graded commutative ring $B$ such that $\gr(B, \ZZ_{\fin}) \equiv \gr A$. For each $J \in \ZZ_{\fin}$, we define the polynomial
\[h_J = \prod_{ j \in J } (z+j)^2 .
\]
For completeness, define $h_\emptyset = 1$. In \cite[Proposition 5.13]{wonpic} the author showed that for a projective module $P$, $\iota_n^2 P = (z+n)^2 P$ and so $\iota_n^2 \cong \Id_{\gr A}$. We denote by $\sigma_n$ the isomorphism $\iota_n^2 A \sra A$. Since $A$ is projective, $\sigma_n$ is given by left multiplication by $h_n^{-1}$. Similarly, for $J \in \ZZ_\fin$, we define
\[ \sigma_J: \iota_J^2 A \sra A
\]
given by left multiplication by $h_J^{-1}$. Now, for $I, J\in \ZZ_{\fin}$, we define
\[\Theta_{I,J} = \iota_{I \oplus J}(\sigma_{I \cap J}) = \Theta_{J,I}
\]
\[ \Theta_{I,J}: \iota_I \iota_J A = \iota_{I\oplus J} \iota_{I \cap J}^2 A \sra \iota_{I\oplus J} A.
\]

\begin{lemma} \label{Btheta} The isomorphisms $\{\Theta_{I,J} \mid I,J \in \ZZ_{\fin} \}$ and the autoequivalences $\{\iota_K \mid K \in \ZZ_{\fin}\}$ satisfy condition~\eqref{thetacond}.
\end{lemma}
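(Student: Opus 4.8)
The plan is to reduce condition~\eqref{thetacond} to an elementary identity among the polynomials $h_K$, using that every module and morphism appearing in it lives inside $\QgrA$ and is given by left multiplication by a rational function. Recall first that the group operation on $\Gamma = \ZZ_\fin$ is symmetric difference, so that ``$\gamma+\delta$'' in \eqref{thetacond} means $\gamma\oplus\delta$; and that each $\iota_K$ is a subfunctor of $\Id_{\gr A}$, so $\iota_K A$ is literally a graded submodule of $A\subseteq\QgrA$ (in fact a rank one graded projective), and $\iota_K$ acts on a morphism of such submodules by restriction.

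The first step is to observe that every morphism occurring in \eqref{thetacond} is left multiplication by an element of $\kk(z)$. Indeed, $\Hom_{\gr A}(A,\iota_\gamma A)\cong(\iota_\gamma A)_0\subseteq A_0 = \kk[z]$, so $\varphi$ is left multiplication by $\theta:=\varphi(1)\in\kk[z]$ (this also follows from \cite[Proposition 3.38]{wonpic}); since the $\iota_K$ act by restriction, $\FFF_\epsilon\FFF_\delta(\varphi)=\iota_\epsilon\iota_\delta(\varphi)$ and $\FFF_{\delta\oplus\epsilon}(\varphi)=\iota_{\delta\oplus\epsilon}(\varphi)$ are again left multiplication by $\theta$, now between the appropriate submodules of $\QgrA$. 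Likewise $\sigma_{I\cap J}$ is left multiplication by $h_{I\cap J}^{-1}$, so $\Theta_{I,J}=\iota_{I\oplus J}(\sigma_{I\cap J})$ and $\FFF_\epsilon(\Theta_{\delta,\gamma})=\iota_\epsilon(\Theta_{\delta,\gamma})$ are left multiplication by $h_{I\cap J}^{-1}$ (respectively $h_{\delta\cap\gamma}^{-1}$). Because $\kk(z)$ is a commutative subring of $\QgrA$, all of these left multiplications commute with one another.

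Granting this, I would compute directly that the left-hand side of \eqref{thetacond} is left multiplication by $h_{\epsilon\cap(\gamma\oplus\delta)}^{-1}\,h_{\gamma\cap\delta}^{-1}\,\theta$, while the right-hand side is left multiplication by $h_{\gamma\cap(\delta\oplus\epsilon)}^{-1}\,h_{\delta\cap\epsilon}^{-1}\,\theta$. So the lemma reduces to the polynomial identity $h_{\epsilon\cap(\gamma\oplus\delta)}\,h_{\gamma\cap\delta} = h_{\gamma\cap(\delta\oplus\epsilon)}\,h_{\delta\cap\epsilon}$. Since $h_K=\prod_{k\in K}(z+k)^2$, it suffices to compare, for each $n\in\ZZ$, the multiplicity of the factor $(z+n)$ on the two sides; writing $a,b,c\in\{0,1\}$ for the indicators of $n\in\gamma$, $n\in\delta$, $n\in\epsilon$, this amounts to the arithmetic identity $c(a\oplus b)+ab = a(b\oplus c)+bc$ in $\NN$, which follows from $x\oplus y = x+y-2xy$ (both sides equal $ab+bc+ca-2abc$), or simply by checking the eight cases.

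The computation itself is routine; the part that needs care is the bookkeeping in the middle step — keeping track of the exact domain and codomain (as submodules of $\QgrA$) of each composed morphism so that the two composites in \eqref{thetacond} are literally comparable, and confirming that applying the subfunctor $\iota_K$ to a left-multiplication map returns the same multiplication restricted to $\iota_K$ of the domain. Both of these rest on the facts recalled from \cite{wonpic} (the $\iota_K$ are subfunctors of the identity; the Hom-spaces among rank one projectives are free of rank one over $\kk[z]$, realized by left multiplication), and the identification $\iota_I\iota_J A = \iota_{I\oplus J}\iota_{I\cap J}^2 A$ used to define $\Theta_{I,J}$. Once these are in place, nothing remains but the elementary identity above.
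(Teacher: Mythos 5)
Your proposal is correct and follows essentially the same route as the paper: reduce everything to left multiplication by commuting elements of $\kk(z)$ (using that the $\iota_K$ are subfunctors of $\Id_{\gr A}$ acting by restriction and that Hom-spaces between rank one projectives are free of rank one over $\kk[z]$), then verify the identity $h_{\gamma\cap\delta}\,h_{\epsilon\cap(\gamma\oplus\delta)} = h_{\delta\cap\epsilon}\,h_{\gamma\cap(\delta\oplus\epsilon)}$. The only difference is cosmetic: the paper verifies this by showing the two index sets have equal unions and vanishing intersections, whereas you check the factor multiplicities pointwise via the indicator arithmetic $c(a\oplus b)+ab = a(b\oplus c)+bc$; both are routine verifications of the same elementary set-theoretic fact.
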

\begin{proof}We must show that for all $I, J, K \in \ZZ_{\fin}$ and all $\varphi \in \Hom_{\gr A}(A, \iota_I A)$,
\[\Theta_{K, I \oplus J} \circ \iota_{K}(\Theta_{J, I}) \circ \iota_{K} \iota_{J} (\varphi) = \Theta_{J+ K, I} \circ \iota_{J+ K}(\varphi) \circ \Theta_{K, J}
\]
or equivalently that
\begin{align*} &\iota_{I \oplus J \oplus K}(\sigma_{(I \oplus J) \cap K}) \circ \iota_{K} \iota_{I \oplus J}(\sigma_{I \cap J}) \circ \iota_{K} \iota_{J}(\varphi) \\ = &\iota_{I, J \oplus K}(\sigma_{I \cap (J \oplus K)}) \circ \iota_{J \oplus K}(\varphi) \circ \iota_{J \oplus K}(\sigma_{J \cap K})
\end{align*}

Recall that by \cite[Proposition 3.38]{wonpic}, the homomorphisms between rank one projective modules (viewed as embedded in $\QgrA$) are all given by multiplication by an element in the commutative ring $\kk(z)$. Since the autoequivalences $\iota_{L}$ act on morphisms by restriction, we need only check that multiplication by 
$h_{I \cap J}^{-1} h_{(I \oplus J) \cap K}^{-1}$ is the same as $h_{J \cap K}^{-1} h_{I \cap (J\oplus K)}^{-1}$. This is true since
\[ (I \cap J) \cap (I \oplus J) \cap K = (J \cap K) \cap I \cap (J \oplus K) = \emptyset
\]
and
\[(I \cap J) \cup \left( (I \oplus J) \cap K \right) = (I \cap J) \cup (I \cap K) \cup (J \cap K) = (J \cap K) \cup ( I \cap (J \oplus K)) .
\]
Therefore, we conclude that the isomorphisms $\{\Theta_{I,J} \mid I,J \in \ZZ_{\fin} \}$ and autoequivalences $\{\iota_K \mid K \in \ZZ_{\fin}\}$ satisfy condition~\eqref{thetacond}.
\end{proof}

As in Proposition~\ref{autoring}, we can define the $\ZZ_{\fin}$-graded ring
\[ B = \bigoplus_{J \in \ZZ_{\fin}} B_J = \bigoplus_{J \in \ZZ_{\fin}} \Hom_{\gr A}(A, \iota_JA).
\]
To be explicit, the multiplication in $B$ is defined as follows. For $a \in B_{I}$ and $b \in B_{J}$, $a \cdot b \in \Hom_{\gr A}(A,\iota_{I \oplus J} A)$ is defined by
\[ a \cdot b = \iota_{I \oplus J}(\sigma_{I \cap J}) \circ \iota_J(a) \circ b.
\]

\begin{theorem}\label{Bequivalent} Let $\m = 0$ so $f = z^2$. There is an equivalence of categories 
\[\gr A(f) \equiv \grB.\]
\end{theorem}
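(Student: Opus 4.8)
The plan is to apply Proposition~\ref{RSequiv} with $S = A(f)$, $G = \ZZ$, $\Gamma = \ZZ_\fin$, and $\FFF_\gamma = \iota_\gamma$. The hypotheses of Proposition~\ref{RSequiv} have two parts, and most of the work is already done: condition~\eqref{thetacond} has been verified in Lemma~\ref{Btheta}, so the associative ring $B$ of Proposition~\ref{autoring} is exactly the $\ZZ_\fin$-graded ring displayed just above the theorem. The remaining hypothesis is that $P = \bigoplus_{J \in \ZZ_\fin} \iota_J A$ be a generator of $\gr A$. But this is precisely the content of \cite[Lemma 5.17]{wonpic}, which tells us that $\{\iota_J A \mid J \in \ZZ_\fin\}$ generates $\gr A$ in the multiple root case. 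Since each $\iota_J$ is an autoequivalence, each $\iota_J A$ is projective, so $P$ is a projective generator. Proposition~\ref{RSequiv} then yields an equivalence $\gr(B, \ZZ_\fin) \xrightarrow{\sim} \gr A(f)$, which is the assertion of the theorem.

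The one genuine loose end is the identification of the abstractly-defined $B = \bigoplus_J \Hom_{\gr A}(A, \iota_J A)$ with the concrete presentation
\[
B \cong \frac{\kk[z][b_n \mid n \in \ZZ]}{(b_n^2 - (z+n)^2 \mid n \in \ZZ)}
\]
promised in the introduction. Strictly, the statement of Theorem~\ref{Bequivalent} as given only asserts the category equivalence, so the concrete presentation may be treated as a separate remark; but since it is the point of the construction I would include it. First I would compute $B_\emptyset = \Hom_{\gr A}(A, A) \cong \kk[z]$ (via \cite[Proposition 3.38]{wonpic}, homomorphisms of rank one projectives act by multiplication by elements of $\kk(z)$, and an endomorphism of $A$ itself must be multiplication by a polynomial), giving the base ring $\kk[z]$. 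Next, for each singleton $n$, I would exhibit a distinguished element $b_n \in B_n = \Hom_{\gr A}(A, \iota_n A)$: since $\iota_n A$ is the kernel of the canonical map from $A$ onto the relevant indecomposable, \cite[Proposition 3.38 and Lemma 3.40]{wonpic} identify a maximal embedding $A \sra \iota_n A$, and $b_n$ is this generator of the cyclic $\kk[z]$-module $B_n$. The relation $b_n^2 = (z+n)^2$ then falls out of the multiplication rule $a \cdot b = \iota_{I\oplus J}(\sigma_{I \cap J}) \circ \iota_J(a) \circ b$ applied with $I = J = \{n\}$: here $I \oplus J = \emptyset$ and $\sigma_{\{n\}}$ is multiplication by $h_n^{-1} = (z+n)^{-2}$, so the product of $b_n$ with itself, as an element of $B_\emptyset = \kk[z]$, is $(z+n)^{-2}$ times the composite $\iota_n(b_n) \circ b_n$, and one checks that this composite is multiplication by $(z+n)^4$, giving $b_n^2 = (z+n)^2$. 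Finally, one argues that $B$ is generated over $\kk[z]$ by the $b_n$ and that there are no further relations, by checking that for each $J = \{n_1, \dots, n_k\}$ the product $b_{n_1} \cdots b_{n_k}$ generates $B_J$ as a $\kk[z]$-module — again using that the relevant $\Hom$-space is a cyclic $\kk[z]$-module and tracking the maximal embedding under the multiplication rule.

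I expect the main obstacle to be this last bookkeeping step: pinning down the maximal embeddings $A \sra \iota_J A$ explicitly and verifying that the multiplication rule sends the product of the chosen generators $b_{n_i}$ to the chosen generator of $B_J$ (up to a unit of $\kk[z]$, i.e. a nonzero scalar), so that no extra relations beyond $b_n^2 = (z+n)^2$ appear and the degree-$J$ component has the expected rank. This is where the explicit computations of \cite[Lemma 3.40]{wonpic} and the precise description of $\iota_n P$ as a kernel must be combined carefully; the commutativity of $B$ is comparatively painless, following from $\Theta_{I,J} = \Theta_{J,I}$ together with the fact that all the morphisms involved are multiplications by elements of the commutative ring $\kk(z)$, which commute. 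Everything else is a direct citation of the machinery already assembled.
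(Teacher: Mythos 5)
Your proposal is correct and matches the paper's proof exactly: Theorem~\ref{Bequivalent} is indeed proved by citing Propositions~\ref{autoring} and \ref{RSequiv} together with Lemma~\ref{Btheta}, with the generator hypothesis supplied by the citation to \cite[Lemma 5.17]{wonpic} that appears in the text just before the theorem. Your appended discussion of the presentation of $B$ is not part of the paper's proof of this theorem but is instead the content of Lemma~\ref{mBprops} and Proposition~\ref{Bpresentation} that follow it, and you correctly flag this distinction; your sketch of the computation $b_n^2 = (z+n)^2$ and of the need to show $b_J = \prod_{j\in J} b_j$ generates $B_J$ freely over $\kk[z]$ agrees with what the paper does in those results.
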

\begin{proof} This is an immediate corollary of Propositions~\ref{autoring} and \ref{RSequiv} together with Lemma~\ref{Btheta}.
\end{proof}

Our next results describe some properties of the ring $B$ which will allow us to give a presentation for $B$. We first establish some notation. For $J \in \ZZ_\fin$ let $\varphi_J$ be the map
\[ \varphi_J: (\iota_JA)_0 \sra \Hom_{\gr A}(A,\iota_J A)
\]
which takes $m \in (\iota_J A)_0$ to the homomorphism defined by $\varphi_J(m)(a) = m\cdot a$. It is clear that $\varphi_J$ is an isomorphism of $\kk[z]$-modules, and we will use $\varphi_J$ to identify $B_J$ with $(\iota_J A)_0$. For $J \in \ZZ_\fin$, define $b_J := \varphi_J(h_J) \in B_J$ with $b_\emptyset := \varphi_\emptyset(1)$. 

\begin{lemma}\label{mBprops}Let $\m = 0$ and let $I, J \in \ZZ_\fin$.
 \begin{enumerate} \item \label{mBprops1} $(\iota_J A)_0 = h_J \kk[z]$ so $b_J$ freely generates $B_J$ as a right $B_{\emptyset}= \kk[z]$-module.
\item \label{mBprops3} $b_I b_J = b_{I \cap J}^2 b_{I \oplus J}$ so $b_J = \prod_{j \in J} b_j$.
\item For all $n \in \ZZ$, $b_n^2 = \varphi_\emptyset(h_n)$.
\item $B$ is a commutative $\kk$-algebra generated by $\{b_n \mid n \in \ZZ\}$.
\end{enumerate}
\end{lemma}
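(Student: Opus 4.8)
The plan is to establish the four parts in order, since each relies on the previous ones. For part~\eqref{mBprops1}, I would recall from \cite[Proposition 5.13]{wonpic} (as used in the setup preceding the lemma) that $\iota_J$ is a subfunctor of $\Id_{\gr A}$ and that $\iota_J A$, realized inside $\QgrA$, is the intersection of the kernels of the relevant maps to the indecomposable modules; more concretely, one should identify $\iota_J A$ with a canonical rank one projective submodule of $\QgrA$ whose degree-zero component is exactly $h_J\kk[z]$. This is essentially the computation that $\iota_n^2 A = (z+n)^2 A$ iterated over $J$, together with the fact that $\iota_J$ acts trivially away from the doubled points indexed by $J$; since $\sigma_J$ is left multiplication by $h_J^{-1}$, the degree-zero part of $\iota_J A$ is $h_J\kk[z]$. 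Then $\varphi_J$ carries $h_J\kk[z]$ isomorphically onto $B_J$, so $b_J = \varphi_J(h_J)$ freely generates $B_J$ over $B_\emptyset = \Hom_{\gr A}(A,A)_0 = \kk[z]$.

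For part~\eqref{mBprops3}, I would unwind the multiplication formula $a\cdot b = \iota_{I\oplus J}(\sigma_{I\cap J})\circ \iota_J(a)\circ b$ displayed just before the lemma, using that under the identification $\varphi$ every morphism between rank one projectives in $\QgrA$ is left multiplication by an element of $\kk(z)$ (by \cite[Proposition 3.38]{wonpic}), so composition is just multiplication in $\kk(z)$. Thus $b_I\cdot b_J$ corresponds to $h_I\cdot h_J\cdot h_{I\cap J}^{-1}$. The combinatorial identity $h_I h_J = h_{I\cap J}^2\, h_{I\oplus J}$ — which holds because $I = (I\cap J)\sqcup(I\oplus J)\cap I$ and $J = (I\cap J)\sqcup(I\oplus J)\cap J$ as disjoint unions, so each $(z+n)^2$ factor is accounted for with the right multiplicity — then gives $h_I h_J h_{I\cap J}^{-1} = h_{I\cap J}\, h_{I\oplus J}$, which is exactly the element representing $b_{I\cap J}^2\, b_{I\oplus J}$. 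Specializing $I = \{i\}$, $J = \{j\}$ with $i\neq j$ yields $b_{\{i\}}b_{\{j\}} = b_{\{i,j\}}$ (since $\{i\}\cap\{j\}=\emptyset$ and $b_\emptyset$ is the identity), and induction on $|J|$ gives $b_J = \prod_{j\in J} b_j$.

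For the third part, take $I = J = \{n\}$ in part~\eqref{mBprops3}: then $I\cap J = \{n\}$, $I\oplus J = \emptyset$, so $b_n^2 = b_n^2\, b_\emptyset$, and tracing through the identification this says $b_n\cdot b_n$ is represented by $h_n$ as an element of $B_\emptyset = (\iota_\emptyset A)_0 = \kk[z]$, i.e.\ $b_n^2 = \varphi_\emptyset(h_n) = \varphi_\emptyset((z+n)^2)$. For part~(4), commutativity: since $\Theta_{I,J} = \Theta_{J,I}$ (noted in the construction) and all the relevant morphisms are multiplications in the commutative ring $\kk(z)$, the product $a\cdot b$ and $b\cdot a$ are represented by the same element of $\kk(z)$, hence $B$ is commutative; and since every $B_J$ is generated over $\kk[z]$ by $b_J = \prod_{j\in J}b_j$ by parts~\eqref{mBprops1} and~\eqref{mBprops3}, and $\kk[z] = B_\emptyset$ is generated by $z$ (note $z \in B_\emptyset$ corresponds to $h_{\{n\}}^{1/2}$-type... more simply $z$ is just an element of $\kk[z]$), the set $\{b_n \mid n\in\ZZ\}$ together with $\kk[z]$ generates $B$ as a $\kk$-algebra; but $z = b_0^2$ if $0 \in$... actually $z$ itself need not be a square, so one states $B$ is generated by $\{b_n \mid n \in \ZZ\}$ over $\kk[z]$, equivalently $B$ is a $\kk$-algebra generated by $z$ and $\{b_n\}$ — I would phrase part~(4) as: $B$ is a commutative $\kk$-algebra, and it is generated by $\{b_n \mid n \in \ZZ\}$ as a module-algebra over $B_\emptyset = \kk[z]$.

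The main obstacle is part~\eqref{mBprops1}: pinning down precisely that $(\iota_J A)_0 = h_J\kk[z]$ inside $\QgrA$. Everything else is then bookkeeping with the combinatorics of $\ZZ_\fin$ and the observation that composition of maps between rank one projectives is multiplication in $\kk(z)$. For part~\eqref{mBprops1} I would reduce to the singleton case by the subfunctor property and the relation $\iota_I\iota_J = \iota_{I\oplus J}\iota_{I\cap J}^2$, invoking \cite[Proposition 5.13]{wonpic} for $\iota_n A$ explicitly, and then argue that $\iota_J A = h_J\cdot A$ as a submodule of $\QgrA$ (it is a rank one projective with structure constants differing from those of $A$ only at the indices in $J$), from which the degree-zero part is immediate.
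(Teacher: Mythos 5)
Your outline for parts (2) and (3) is essentially the paper's: unwind the multiplication formula, use that all morphisms between rank-one projectives in $\QgrA$ are given by left multiplication by elements of $\kk(z)$, and verify the combinatorial identity $h_I h_J h_{I\cap J}^{-1} = h_{I\cap J}h_{I\oplus J}$. That part is sound.

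Part (1) has a genuine error. You assert that ``$\iota_J A = h_J \cdot A$ as a submodule of $\QgrA$,'' and earlier you jump from ``$\sigma_J$ is left multiplication by $h_J^{-1}$'' to ``the degree-zero part of $\iota_J A$ is $h_J\kk[z]$.'' Neither of these is valid. The morphism $\sigma_J$ identifies $\iota_J^2 A$ (not $\iota_J A$) with $h_J A$, and indeed $\iota_J^2 A = h_J A \subsetneq \iota_J A \subsetneq A$ when $J\neq\emptyset$. The equality $\iota_J A = h_J A$ cannot hold because $h_J A \cong A$ while $\iota_J A \not\cong A$ (the autoequivalence $\iota_J$ is nontrivial on the simple factors $X\s{j}, Y\s{j}$ for $j\in J$). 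What is true, and what the paper actually proves, is that only the \emph{degree-zero components} agree: one has $(z+n)^2 A_0 = (\iota_n^2 A)_0 \subseteq (\iota_n A)_0 \subseteq A_0$, and since $\iota_n A$ is the kernel of a surjection onto $A/zA\s{n}$ or $A/yA\s{n+1}$, whose nonzero graded pieces are $2$-dimensional, $(\iota_n A)_0$ has $\kk$-codimension $2$ in $A_0$, forcing equality with $(z+n)^2\kk[z]$. Iterating this codimension count over $j\in J$ gives $(\iota_J A)_0 = h_J\kk[z]$. Without that dimension argument your step (1) does not go through, and since the rest of the lemma builds on (1), the gap propagates.

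Part (4) is also incomplete as you wrote it. The lemma claims $B$ is generated as a $\kk$-algebra by $\{b_n \mid n\in\ZZ\}$ \emph{alone}, but you only establish generation over $B_\emptyset = \kk[z]$ and explicitly punt on expressing $z$ via the $b_n$'s, proposing instead to weaken the statement. The paper closes this gap by noting $b_1^2 - b_0^2 = \varphi_\emptyset\bigl((z+1)^2 - z^2\bigr) = \varphi_\emptyset(2z+1)$, so $\varphi_\emptyset(z) = \tfrac12\bigl(b_1^2 - b_0^2 - \varphi_\emptyset(1)\bigr)$, which shows $z$ lies in the $\kk$-subalgebra generated by the $b_n$. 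Your commutativity argument via $\Theta_{I,J}=\Theta_{J,I}$ and computation in $\kk(z)$ is a correct alternative to the paper's appeal to the symmetry of $b_I b_J = b_{I\cap J}^2 b_{I\oplus J}$, but the generation claim needs the explicit identity.
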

\begin{proof}
This is an analogue of \cite[Lemma 10.2]{smith}; we use similar arguments to Smith, altering them slightly when necessary. Though these results are similar, we will see later in this section that, interestingly, the ring $B$ exhibits properties that are rather different those of from Smith's ring $C$.

 (1) Recall that in \cite[Proposition 5.13]{wonpic}, for each $n \in \ZZ$ and for each rank one projective $P$, we constructed $\iota_n P$ as a submodule of $P$, in particular the kernel of a surjection $P \sra A/zA \s{n}$ or $P \sra A/yA\s{n+1}$. We also saw that $(\iota_n^2 P)_0 = (z+n)^2 P_0$. Therefore, $(z+n)^2 A_0 = (\iota_n^2 A)_0 \subseteq (\iota_n A)_0$. But since $\iota_n A$ is the kernel of the nonzero morphism to $A/zA\s{n}$ or $A/yA\s{n+1}$, and these modules have $\kk$-dimension $2$ in all graded components where they are nonzero \cite[Lemma 3.18]{wonpic}, so $(\iota_n A)_0$ has $\kk$-codimension 2 in $A_0$. Hence, 
\[(\iota_n A)_0 = (z+n)^2 A_0 = h_n \kk[z].
\]
Now, since the autoequivalences $\iota_n$ commute, $(\iota_J A)_0 \subseteq (\iota_j A)_0 = h_j \kk[z]$ for each $j \in J$. Additionally, $(\iota_J A)_0$ has $\kk$-codimension $2|J|$ in $A_0$. Therefore, 
\[(\iota_J A)_0 = h_J \kk[z].\]

Identifying $(\iota_J A)_0$ with $B_J$ via $\varphi$, we see that $b_J = \varphi_J(h_J)$ freely generates $B_J$ as a $\kk[z]$-module. Now since $\iota_\emptyset = \Id_{\gr A}$, multiplication $B_J \times B_\emptyset \sra B_J$ sends $(f,g)$ to $f \circ g$. Since $B_\emptyset = \kk[z]$, the result follows.

(2) This result follows from a proof identical to the proof of \cite[Lemma 10.2.(5)-(6)]{smith}. For convenience, we summarize it here. By the definition of multiplication in $B$,
\[b_I b_J = \iota_{I \oplus J}(\sigma_{I \cap J}) \circ \iota_J(\varphi_I(h_I)) \circ \varphi_J(h_J).
\]
Recalling that the involutions $\{\iota_I \mid I \in \ZZ_\fin\}$ act on morphisms by restriction, we see that $b_I b_J: A \sra \iota_{I \oplus J} A$ is given by left multiplication by $h_{I \cup J}$, and therefore
\begin{align*} b_I b_J = \varphi_{I \oplus J}(h_{I \cup J}).
\end{align*}
Now note that 
\begin{align*} b_{I \cap J}^2b_{I \oplus J} &= b _{I \cap J}(b_{I \cap J} b_{I \oplus J}) = b_{I\cap J}(\varphi_{I \cup J}(h_{I \cup J})) = b_{I \cap J} b_{I \cup J} \\
&= \varphi_{(I \cap J) \oplus (I \cup J)}(h_{(I \cap J) \cup (I \cup J)}) \\
&= \varphi_{I \oplus J}(h_{I \cup J}) = b_ Ib_J.
\end{align*}

Induction on $|J|$ yields $b_J = \prod_{j \in J} b_j$.

(3) For $a \in A$,
\[ (b_n.b_n)(a) = \sigma_n(h_n^2) (a) = (z+n)^{-2}h_n^2a = h_n a. 
\]
Hence, $b_n^2$ is given by multiplication by $(z+n)^2$, that is, 
\[b_n^2 = \varphi_\emptyset((z+n)^2) . \]

(4) Notice that 
\[b_1^2 - b_0^2 = \varphi_\emptyset((z+1)^2 - z^2) = \varphi_\emptyset(2z + 1)
\] 
so
\[\varphi_\emptyset(z) =\frac{1}{2}\left( b_1^2 - b_0^2 - \varphi_\emptyset(1)\right) .
\]
Hence, the $b_n$ generate $B_\emptyset$ as a $\kk$-algebra, and combined with parts \ref{mBprops1} and \ref{mBprops3}, the $b_n$ generate $B$ as a $\kk$-algebra. By part \ref{mBprops3}, $b_n b_m = b_m b_n$ for all $n, m \in \ZZ$, and the result follows.
\end{proof}

\begin{proposition} \label{Bpresentation} The $\ZZ_\fin$-graded ring $B$ has presentation
\[ B \cong \frac{\kk[z][b_n \mid n\in \ZZ]}{\left( b_n^2 = (z+n)^2 \mid n \in \ZZ \right)}
\]
where $\deg z = \emptyset$ and $\deg b_n = n$.
\end{proposition}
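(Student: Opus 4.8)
The plan is to exhibit an explicit surjective graded $\kk$-algebra homomorphism onto $B$ from the presented ring and then check injectivity one graded component at a time, using the description of the pieces $B_J$ from Lemma~\ref{mBprops}. Write $\tilde B = \kk[z][b_n \mid n \in \ZZ]/(b_n^2 - (z+n)^2 \mid n \in \ZZ)$, graded by $\ZZ_\fin$ via $\deg z = \emptyset$, $\deg b_n = n$; this grading is well defined since each relation $b_n^2 - (z+n)^2$ is homogeneous of degree $\emptyset$. First I would define a $\kk$-algebra map $\Phi \colon \tilde B \to B$ on generators by $z \mapsto \varphi_\emptyset(z)$ and $b_n \mapsto b_n \in B_n$. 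Since $\varphi_\emptyset$ restricts to a ring isomorphism $\kk[z] \xrightarrow{\ \sim\ } B_\emptyset$ and Lemma~\ref{mBprops}\eqref{mBprops1}--(3) gives $b_n^2 = \varphi_\emptyset((z+n)^2) = (\varphi_\emptyset(z)+n)^2$ in $B$, the map $\Phi$ kills the defining relations, so it is well defined; it is a homomorphism of $\ZZ_\fin$-graded rings because it sends homogeneous generators to homogeneous elements of the same degree.

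Next I would check surjectivity. The image of $\Phi$ contains every $b_n$, and by the identity $\varphi_\emptyset(z) = \tfrac12\bigl(b_1^2 - b_0^2 - 1\bigr)$ established in the proof of Lemma~\ref{mBprops}(4) it also contains $\varphi_\emptyset(z)$; hence it contains the $\kk$-subalgebra of $B$ generated by $\{b_n \mid n \in \ZZ\}$, which is all of $B$ by Lemma~\ref{mBprops}(4). In particular, for each $J \in \ZZ_\fin$ the degree-$J$ component $\Phi_J \colon \tilde B_J \to B_J$ is surjective.

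For injectivity I would argue degreewise. Fix $J \in \ZZ_\fin$. In $\tilde B$, using $b_n^2 = (z+n)^2 \in \kk[z]$ one reduces any monomial in the $b_n$ that lies in degree $J$ (symmetric difference) to the form $p(z)\,b_J$ with $b_J := \prod_{j \in J} b_j$; thus $\tilde B_J$ is a cyclic $\kk[z]$-module generated by $b_J$. On the other hand, Lemma~\ref{mBprops}\eqref{mBprops1}--\eqref{mBprops3} shows $B_J$ is free of rank one over $B_\emptyset \cong \kk[z]$ with basis $b_J$. The map $\Phi_J$ intertwines the $\kk[z]$-action on $\tilde B_J$ with the $B_\emptyset$-action on $B_J$ (via $\Phi(z) = \varphi_\emptyset(z)$) and sends the cyclic generator $b_J$ to the basis element $b_J$; a surjection of $\kk[z]$-modules from a cyclic module onto a free rank-one module which hits a basis element is an isomorphism. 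Hence $\Phi_J$ is bijective for every $J$, so $\Phi$ is an isomorphism of $\ZZ_\fin$-graded rings.

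The only step that requires genuine care is the normal-form reduction showing $\tilde B_J$ is cyclic over $\kk[z]$, but because the defining relations are squaring relations that are homogeneous of degree $\emptyset$ (so they never mix graded components), this presents no real difficulty; the substantive fact that prevents any further collapse—namely that $B_J$ is genuinely free of rank one—has already been proved in Lemma~\ref{mBprops} using the geometry of the involutions $\iota_J$.
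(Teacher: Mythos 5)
Your proof is correct and follows essentially the same strategy as the paper's: both arguments reduce a homogeneous element of degree $J$ to the normal form $p(z)\,b_J$ using the relations $b_n^2=(z+n)^2$, and then invoke the freeness of $B_J$ as a rank-one $\kk[z]$-module on $b_J$ (Lemma~\ref{mBprops}) to conclude that $p(z)=0$. The only difference is presentational---you package the argument as an explicit graded homomorphism $\Phi\colon\tilde B\to B$ checked degreewise, whereas the paper argues directly with relations---but the mathematical content is identical.
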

\begin{proof}By Lemma~\ref{mBprops}, the elements $\{b_n \mid n \in \ZZ\}$ generate $B$ as a $\kk$-algebra and satisfy the relations $b_n^2 = (z+n)^2$ for all $n \in \ZZ$. Hence, we need only show that the ideal generated by these relations contains all relations in $B$.

Let $r = 0$ be a relation in $B$. Since $B$ is graded, we may assume that $r$ is homogeneous of degree $I$. By Lemma~\ref{mBprops}, we can write 
\[ r = b_I \beta = 0
\]
where $\beta$ is a $\kk[z]$-linear combination of products of $b_j^2$'s for some integers $j$. By using the relations $b_j^2 = (z+j)^2$ for each $j$, we can rewrite $\beta$ in $B$ as a polynomial $g(z) \in \kk[z]$. Hence
\[ r = b_I g(z) = 0
\]
but since $B_I$ is freely generated as a right $B_\emptyset = \kk[z]$-module by $b_I$, this implies that $g(z) = 0$, so the relation $r$ was already in the ideal $\left( b_n^2 = (z+n)^2 \mid n \in \ZZ \right)$, completing our proof.
\end{proof}

We use this presentation to prove some basic results about $B$. While the construction of $B$ was analogous to that of Smith's ring $C$ in \cite{smith}, the two rings are different enough to warrant closer examination. Smith proves that $C$ is an ascending union of Dedekind domains. In contrast, since $b_n^2 - (z+n)^2 = (b_n + z+n)(b_n - z -n)$, $B$ is not even a domain.

\begin{lemma} \label{mBminprime} An ideal of $B$ is a minimal prime ideal if and only if it is of the form
\[ \left(b_n + (-1)^{\epsilon_n}(z+n) \st n \in \ZZ \right)
\]
for some choice of $\epsilon_n \in \{0,1\}$ for each $n \in \ZZ$.
\end{lemma}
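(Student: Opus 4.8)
The plan is to analyze the ring $B \cong \kk[z][b_n \mid n \in \ZZ]/(b_n^2 - (z+n)^2 \mid n \in \ZZ)$ directly from the presentation in Proposition~\ref{Bpresentation}. First I would observe that in $B$ each relation factors as $b_n^2 - (z+n)^2 = (b_n - (z+n))(b_n + (z+n))$, so for any choice of signs $\epsilon = (\epsilon_n)_{n \in \ZZ}$ with $\epsilon_n \in \{0,1\}$, the ideal
\[ \pp_\epsilon = \left(b_n + (-1)^{\epsilon_n}(z+n) \st n \in \ZZ \right)
\]
contains all the defining relations of $B$, hence is a genuine ideal of $B$, and the quotient $B/\pp_\epsilon$ is obtained from $\kk[z][b_n]$ by setting $b_n = -(-1)^{\epsilon_n}(z+n)$ for every $n$. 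Thus $B/\pp_\epsilon \cong \kk[z]$, an integral domain, so each $\pp_\epsilon$ is prime.

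Next I would show these are exactly the minimal primes. The key point is that $B$ is integral over $\kk[z]$ (each generator $b_n$ satisfies the monic equation $T^2 - (z+n)^2 = 0$), so by lying over / incomparability the minimal primes of $B$ all lie over the zero ideal of $\kk[z]$, i.e. contract to $(0)$ in $\kk[z]$; equivalently $z$ (and all $z+n$) are non-zerodivisors on $B/\qq$ only needs the contraction fact. Given a prime $\qq$ of $B$, for each $n$ the element $(b_n - (z+n))(b_n + (z+n)) = 0 \in \qq$, so $\qq$ contains $b_n - (z+n)$ or $b_n + (z+n)$; choosing $\epsilon_n$ accordingly gives $\pp_\epsilon \subseteq \qq$ for some sign vector $\epsilon$. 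Hence every prime contains some $\pp_\epsilon$, and since each $\pp_\epsilon$ is prime, the minimal primes are precisely the $\pp_\epsilon$ — provided the $\pp_\epsilon$ are pairwise incomparable. To check incomparability: if $\pp_\epsilon \subseteq \pp_{\epsilon'}$ with $\epsilon_n \neq \epsilon'_n$ for some $n$, then both $b_n - (z+n)$ and $b_n + (z+n)$ lie in $\pp_{\epsilon'}$, forcing $2(z+n) \in \pp_{\epsilon'}$; but $B/\pp_{\epsilon'} \cong \kk[z]$ and the image of $z+n$ there is $z+n \neq 0$ (using $\operatorname{char}\kk = 0$ so $2$ is invertible), a contradiction. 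Therefore $\pp_\epsilon = \pp_{\epsilon'}$ only when $\epsilon = \epsilon'$, and the $\pp_\epsilon$ are pairwise incomparable, so each is minimal.

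The main obstacle I anticipate is the passage from "every prime contains some $\pp_\epsilon$" to "the minimal primes are exactly the $\pp_\epsilon$" in the presence of infinitely many generators: one must be slightly careful that the union $\bigcup_{\text{finite } J} (b_j \pm (z+j) \st j \in J)$ behaves well and that a prime really does make a \emph{consistent} choice of sign for all $n$ simultaneously (it does, since the sign-choice is made independently for each $n$ and a prime ideal is closed under the ideal operations). A clean way to organize this is to note $B = \dirlim_{J} B_J$ where $B_J = \kk[z][b_j \mid j \in J]/(b_j^2 - (z+j)^2 \mid j \in J)$ ranges over finite subsets $J \subseteq \ZZ$, reduce the statement to each Noetherian ring $B_J$ (where $B_J$ is a finite $\kk[z]$-algebra and the argument above applies verbatim with finitely many signs), and then take the limit: a minimal prime of $B$ restricts to a minimal prime of each $B_J$, and compatibility of the sign choices across the directed system assembles a global sign vector $\epsilon$. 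With that structure in place the verification is routine.
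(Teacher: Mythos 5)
Your proof is correct and takes essentially the same approach as the paper: identify primes of $B$ with primes of $\kk[z][b_n \mid n \in \ZZ]$ containing the relations, observe each $\pp_\epsilon$ is prime because $B/\pp_\epsilon \cong \kk[z]$, and note every prime must absorb one factor of $(b_n-(z+n))(b_n+(z+n))$ for each $n$. Your explicit incomparability check (using $\operatorname{char}\kk = 0$ to get $2(z+n) \in \pp_{\epsilon'}$, a contradiction) fills in a step the paper leaves implicit, and is a genuine improvement; on the other hand, the clause invoking integrality of $B$ over $\kk[z]$ is garbled and never actually used, and the closing paragraph about directed limits is unnecessary since the sign choice is made independently at each $n$ and a prime ideal makes these choices consistently by definition.
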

\begin{proof} We work in the polynomial ring $S = \kk[z][b_n \mid n \in \ZZ]$. The prime ideals of $B$ correspond to prime ideals of $S$ containing $\left( b_n^2 = (z+n)^2 \mid n \in \ZZ \right)$. For each $n \in \ZZ$, choose $\epsilon_n \in \{0,1\}$. Viewing $S$ as a polynomial ring with coefficients in $\kk[z]$, we see that
\[ \pp = \left(b_n + (-1)^{\epsilon_n}(z+n) \st n \in \ZZ \right)
\]
is the kernel of the map evaluating a polynomial $g(b_n \mid n \in \ZZ)$ at the point $((-1)^{\epsilon_n} (z+n) \mid n \in \ZZ)$ and so $\pp$ is a prime ideal of $S$. To see that $\pp$ corresponds to a minimal prime, we observe that for every $n \in \ZZ$, a prime ideal containing $b_n^2 - (z+n)^2$ must contain either $b_n + (z+n)$ or $b_n-(z+n)$. Hence $\pp$ corresponds to a minimal prime.
\end{proof}

For $J \in \ZZ_\fin$ we write $R_J$ for the $\kk$-subalgebra of $B$ generated by the elements $\{ 1, z \} \cup \{b_n \mid n \in J \}$. By the same argument as in Proposition~\ref{Bpresentation}, $R_J$ has the presentation
\[ R_J \cong \frac{\kk[z][b_n \mid n\in J]}{\left( b_n^2 = (z+n)^2 \st n \in J \right)}.
\]
We will use the fact that $B$ is an ascending union of the subrings $R_J$ for any ascending, exhaustive chain of subsets $J \in \ZZ_\fin$.

\begin{proposition}\label{ringBprops} $B$ is a non-noetherian, reduced ring of Krull dimension $1$.
\end{proposition}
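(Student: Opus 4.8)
The plan is to prove the three assertions — non-noetherian, reduced, Krull dimension $1$ — separately, exploiting the presentation from Proposition~\ref{Bpresentation} and the filtration $B = \bigcup_{J \in \ZZ_\fin} R_J$. First I would handle \emph{reduced}: by Lemma~\ref{mBminprime} the minimal primes of $B$ are exactly the ideals $\pp_{(\epsilon_n)} = (b_n + (-1)^{\epsilon_n}(z+n) \mid n \in \ZZ)$ indexed by $(\epsilon_n) \in \{0,1\}^{\ZZ}$, and I would argue that their intersection is $0$. Since $B = \bigcup_J R_J$, an element of $B$ lies in some $R_J$ with $J$ finite, so it suffices to show $\bigcap_{(\epsilon_n)} \pp_{(\epsilon_n)} \cap R_J = 0$; restricted to $R_J$ this intersection is $\bigcap_{(\epsilon_n)_{n\in J}} (b_n + (-1)^{\epsilon_n}(z+n) \mid n \in J)$, a finite intersection of primes in the finitely generated $\kk$-algebra $R_J$. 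Each such prime is the kernel of evaluation $b_n \mapsto (-1)^{\epsilon_n}(z+n)$ into the domain $\kk[z]$, and a polynomial in the $b_n$ ($n \in J$) vanishing under all $2^{|J|}$ sign choices must be zero — this is a routine induction on $|J|$ using that $\kk[z]$ is an infinite integral domain (one separates the degree-$0$ and degree-$1$ parts in each $b_n$ via $g(b_n)+g(-b_n)$ and $g(b_n)-g(-b_n)$). Hence $B$ is reduced.

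Next, \emph{Krull dimension $1$}. I would show $\Kdim R_J = 1$ for each finite $J$ and then pass to the union. For a single $n$, $\kk[z][b_n]/(b_n^2-(z+n)^2) \cong \kk[z][b_n]/((b_n-z-n)(b_n+z+n))$ is a one-dimensional ring (two lines meeting in a point in $\mathbb{A}^2$); more generally $R_J$ is a finite module over $\kk[z]$ (it is spanned over $\kk[z]$ by the squarefree monomials $\prod_{n \in J'} b_n$, $J' \subseteq J$, since $b_n^2 \in \kk[z]$), so by the standard fact that a ring module-finite over a noetherian ring $\kk[z]$ has the same Krull dimension, $\Kdim R_J = \Kdim \kk[z] = 1$. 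For the union: any chain of primes $\pp_0 \subsetneq \pp_1 \subsetneq \cdots$ in $B$ contracts to a chain in each $R_J$, and I must check the contractions stay strict for $J$ large enough — if $\pp_i \cap R_J = \pp_{i+1}\cap R_J$ for all $J$ then $\pp_i = \pp_{i+1}$ since $B = \bigcup R_J$ — so $\Kdim B \le \sup_J \Kdim R_J = 1$; and $\Kdim B \ge 1$ because, e.g., $(b_n - z - n \mid n) \subsetneq (b_n - z-n \mid n) + (z)$ (equivalently one exhibits a length-$1$ chain over the minimal prime $\pp_{(0,0,\dots)} \cong \kk[z]$).

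Finally, \emph{non-noetherian}. The cleanest route is to exhibit an infinite strictly ascending chain of ideals, or an ideal that is not finitely generated. I would use the minimal primes: $B$ has infinitely many minimal primes (one for each $(\epsilon_n) \in \{0,1\}^\ZZ$, and distinct sign sequences give distinct primes since they differ already in $R_{\{n\}}$ for some $n$), whereas a noetherian ring has only finitely many minimal primes. Alternatively, and perhaps more transparently, the chain of ideals $I_k = (b_0 - z,\ b_1 - z - 1,\ \dots,\ b_k - z - k) \cdot B$ is strictly ascending: $b_{k+1} - z - k - 1 \notin I_k$ because modulo $I_k$ the ring $B/I_k$ still has $b_{k+1}$ free of the relation $b_{k+1} = z + k+1$ (one sees this by mapping $B/I_k$ onto a ring in which $b_{k+1}$ is sent to $-(z+k+1)$, e.g.\ $\pp_{(0,\dots,0,1,0,\dots)}$ with the single $1$ in slot $k+1$). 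Either argument shows $B$ is not noetherian.

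The main obstacle I anticipate is purely bookkeeping rather than conceptual: making the ``pass to the union'' arguments precise — namely that intersections, Krull dimension, and non-noetherianity all interact correctly with the directed union $B = \bigcup_J R_J$ — and verifying the ``polynomial vanishing under all sign substitutions is zero'' lemma with enough care to conclude $\bigcap \pp_{(\epsilon_n)} = 0$. None of these is deep, but they must be stated cleanly. Everything else reduces to the already-established presentation in Proposition~\ref{Bpresentation} and the description of minimal primes in Lemma~\ref{mBminprime}.
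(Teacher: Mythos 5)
Your proposal is correct and reaches all three conclusions by legitimate arguments, but it differs from the paper's proof in two of the three parts, so a comparison is worthwhile. For non-noetherianity, you and the paper both observe that a noetherian ring has only finitely many minimal primes, whereas Lemma~\ref{mBminprime} gives infinitely many; your alternative ascending chain $I_k = (b_0 - z, \dots, b_k - z - k)$ is also sound. For Krull dimension, the paper takes a shorter path: since every prime of $B$ contains a minimal prime $\pp$ and $B/\pp \cong \kk[z]$, any finite chain of primes in $B$ maps to a chain in some $\kk[z]$, giving $\Kdim B = 1$ directly. You instead prove $\Kdim R_J = 1$ via module-finiteness over $\kk[z]$ and pass to the directed union; this is more work but has the virtue of not depending on the explicit form of $B/\pp$, so it would survive perturbations of the presentation. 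For reducedness, both arguments reduce to showing that an element $a \in R_J$ lying in every minimal prime must vanish, and both proceed by induction on $|J|$, eliminating one generator $b_j$ at a time using the quadratic relation $b_j^2 = (z+j)^2$. The paper does this by a direct ideal-theoretic manipulation: writing $a = (b_j + (z+j))r + s$ with $s \in (b_n - (z+n) : n \neq j)$, evaluating at $b_n \mapsto z+n$ to conclude $r \in (b_n - (z+n) : n \in J)$, and then using $b_j^2 - (z+j)^2 = 0$ to push $a$ into the smaller ideal. You instead use the free $\kk[z]$-basis $\{\prod_{n \in J'} b_n : J' \subseteq J\}$ of $R_J$ and the even/odd decomposition under $b_j \mapsto \pm(z+j)$ to show the coefficients $g_{J'}$ all vanish; this is essentially a linear-algebra reformulation of the same inductive step, arguably cleaner to write but relying on the freeness of $B_I$ over $\kk[z]$ established in Lemma~\ref{mBprops}(1), which you correctly invoke. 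Both reducedness arguments are valid; neither is substantially shorter than the other once the bookkeeping you flag (passage to $R_J$, uniqueness of the monomial expansion) is made explicit.
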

\begin{proof}We showed in Lemma~\ref{mBminprime} that $B$ has infinitely many minimal prime ideals, so $B$ is not noetherian. Further, the quotient of $B$ by a minimal prime is isomorphic to $\kk[z]$, and hence $B$ has Krull dimension $1$. We will show that the intersection of all minimal primes is $(0)$, so the nilradical $\nn(B)=(0)$. Let $a$ be an element in the intersection of all minimal primes. We can write $a$ as a sum of finitely many homogeneous terms, so $a$ is an element of the subring $R_J$ for some $J \in \ZZ_\fin$. Suppose $j \in J$. Since 
\begin{align*} a &\in \left(b_j + (z+j)\right) + \left( b_n - (z+n) \st n \in J \setminus \{j\} \right) \mbox{ and} \\
a &\in \left(b_j - (z+j)\right) + \left( b_n - (z+n) \st n \in J \setminus \{j\} \right),
\end{align*} 
we can write
\begin{equation}\label{eqnil} a = (b_j + (z+j))r + s = (b_j - (z+j))r' + s',
\end{equation}
for some $r, r' \in R_J$ and some $s, s' \in \left( b_n - (z+n) \mid n \in J \setminus \{j\}\right)$. Setting $b_n = (z+n)$ for all $n \in J$, the right hand side of \eqref{eqnil} is identically $0$ and hence
\[ r \in \left( b_n - (z+n) \mid n\in J\right)
\]
so
\[ a = (b_j + (z+j))r + s \in \left(b_j^2-(z+j)^2\right) + \left(b_n - (z+n) \mid n \in J\setminus \{j\} \right) .
\]
Since $(b_j^2 - (z+j)^2) = (0)$ in $B$, therefore
\[ a \in \left(b_n - (z+n) \mid n \in J\setminus \{j\} \right) .
\]
Inducting on the size of $J$, we conclude that $a = 0$ and since $a$ was an arbitrary element in the intersection of all primes, we conclude that $\nn(B) = (0)$.
\end{proof}

\subsection{Non-congruent roots}
Let $\m \in \kk \setminus \ZZ$, so we are in the distinct, non-congruent root case. For notational convenience, let $\Gamma = \ZZ_\fin \times \ZZ_\fin$. This case bears resemblance to the multiple root case. By \cite[Lemma 5.17]{wonpic}, we know that the set $\{\iota_\gamma A \mid \gamma \in \Gamma \}$ generates $\gr A$. We will show that the isomorphisms between these autoequivalences satisfy condition~\eqref{thetacond}, so we can use Proposition~\ref{autoring} to define a ring $D$ with an equivalent graded module category. We will then show that $D$ is commutative, and give a presentation for $D$.

For each $(J,J') \in \Gamma$, define the polynomial
\begin{equation} h_{(J,J')} = \prod_{ j \in J } (z+j) \prod_{j' \in J'} (z+j'+\m ).
\end{equation}
For a projective module $P$ we showed, in \cite[Proposition 5.14]{wonpic}, that $\iota_{(n ,\emptyset)}^2 P = (z+n) P$ and $\iota_{(\emptyset, n)}^2P = (z + n + \m )P$ and so $\iota_{(n, \emptyset)}^2 \cong \Id_{\gr A} \cong \iota_{(\emptyset, n)}^2$. We denote by $\sigma_{(n, \emptyset)}$ the isomorphism $\iota_{(n ,\emptyset)}^2 A \sra A$ and by $\sigma_{(\emptyset, n)}$ the isomorphism $\iota_{(\emptyset, n)}^2 A \sra A$. Since $A$ is projective, $\sigma_{(n, \emptyset)}$ is given by left multiplication by $h_{(n,\emptyset)}^{-1}$ and $\sigma_{(\emptyset, n)}$ is given by left multiplication by $h_{(\emptyset, n)}^{-1}$. Similarly, for $(J,J') \in \Gamma$, we define
\[ \sigma_{(J,J')} = \prod_{j \in J} \sigma_{(j, \emptyset)} \prod_{j' \in J'} \sigma_{(\emptyset, j')} : \iota_{(J,J')}^2 A \sra A
\]
and note that $\sigma_{(J,J')}$ is also given by left multiplication by $h_{(J,J')}^{-1}$.

Now, for $(I,I'), (J,J') \in \Gamma$, we define
\[\Theta_{(I,I'),(J,J')} = \iota_{(I \oplus J, I' \oplus J')}(\sigma_{(I \cap J, I' \cap J')}) = \Theta_{(J, J'),(I,I')}
\]
\[ \Theta_{(I,I'),(J,J')}: \iota_{(I,I')} \iota_{(J,J')} A = \iota_{(I\oplus J, I' \oplus J')} \iota_{(I \cap J, I' \cap J')}^2 A \sra \iota_{(I\oplus J, I' \oplus J')} A.
\]

\begin{lemma} \label{dBtheta} The isomorphisms $\{\Theta_{\gamma, \delta} \mid \gamma, \delta \in \Gamma \}$ and the autoequivalences $\{\iota_\gamma \mid \gamma \in \Gamma\}$ satisfy condition~\eqref{thetacond}.
\end{lemma}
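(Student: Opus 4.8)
The plan is to mimic the proof of Lemma~\ref{Btheta} verbatim, replacing $\ZZ_\fin$-arithmetic with coordinatewise $\ZZ_\fin \times \ZZ_\fin$-arithmetic. Condition~\eqref{thetacond} for $\gamma = (I,I')$, $\delta = (J,J')$, $\epsilon = (K,K')$ and $\varphi \in \Hom_{\gr A}(A, \iota_{(I,I')} A)$ unwinds, exactly as in Lemma~\ref{Btheta}, to the requirement that two compositions of homomorphisms between rank one projective submodules of $\QgrA$ agree. By \cite[Proposition 3.38]{wonpic}, every such homomorphism is left multiplication by an element of the commutative field $\kk(z)$, and since each $\iota_\gamma$ acts on morphisms by restriction, the two sides of \eqref{thetacond} are both multiplication by a single scalar in $\kk(z)$. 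So the whole statement reduces to the scalar identity
\[
h_{(I \cap J, I' \cap J')}^{-1} \, h_{((I \oplus J) \cap K,\, (I' \oplus J') \cap K')}^{-1} = h_{(J \cap K, J' \cap K')}^{-1} \, h_{(I \cap (J \oplus K),\, I' \cap (J' \oplus K'))}^{-1}
\]
in $\kk(z)$.

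Next I would verify this scalar identity by reducing to each coordinate separately, since $h_{(J,J')} = h_{(J,\emptyset)} \, h_{(\emptyset,J')}$ and the two factors involve disjoint sets of linear forms ($z+j$ versus $z+j'+\m$, which share no common root because $\m \notin \ZZ$). For the first coordinate this is precisely the computation already carried out in Lemma~\ref{Btheta}: one checks the two index sets $I \cap J$ and $(I \oplus J) \cap K$ are disjoint with union $(I\cap J)\cup(I\cap K)\cup(J\cap K)$, and symmetrically $J \cap K$ and $I \cap (J \oplus K)$ are disjoint with the same union, using $(I\cap J)\cap(I\oplus J)\cap K = \emptyset$ and $(I\cap J)\cup((I\oplus J)\cap K) = (I\cap J)\cup(I\cap K)\cup(J\cap K)$. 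Hence $h_{(I\cap J,\emptyset)} h_{((I\oplus J)\cap K,\emptyset)} = h_{(J\cap K,\emptyset)} h_{(I\cap(J\oplus K),\emptyset)}$. The identical set-theoretic argument applied to $I', J', K'$ gives the matching identity in the second coordinate. Multiplying the two coordinatewise identities and taking reciprocals yields the displayed scalar equality, which completes the verification of condition~\eqref{thetacond}.

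There is really no serious obstacle here; the content of the lemma is entirely formal once one has \cite[Proposition 3.38]{wonpic} and the fact that the $h_{(n,\emptyset)}$ and $h_{(\emptyset,n)}$ are the squared-factor-cancellation polynomials of \cite[Proposition 5.14]{wonpic}. The only point requiring a word of care is confirming that $\sigma_{(J,J')}$ really is left multiplication by $h_{(J,J')}^{-1}$ — i.e.\ that the chosen natural isomorphisms $\iota_{(j,\emptyset)}^2 A \to A$ and $\iota_{(\emptyset,j)}^2 A \to A$ commute and compose to give the product, so that $\Theta_{\gamma,\delta}$ is well-defined and symmetric — but this follows because all these maps are multiplications by elements of the commutative ring $\kk(z)$ and therefore automatically commute. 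I would therefore write the proof as a short paragraph: reduce via \cite[Proposition 3.38]{wonpic} to the scalar identity, split it into the two coordinates, invoke the set-theoretic identities from Lemma~\ref{Btheta} in each coordinate, and conclude.
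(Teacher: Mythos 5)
Your proposal is correct and takes exactly the approach the paper does: the paper's proof of Lemma~\ref{dBtheta} is the one-line remark that it ``follows from the same proof as Lemma~\ref{Btheta} with doubled indices,'' and your write-up simply carries out that replacement explicitly, reducing to the scalar identity in $\kk(z)$ and applying the set-theoretic computation coordinatewise. The extra observation that $h_{(J,J')} = h_{(J,\emptyset)}\,h_{(\emptyset,J')}$ factors into coordinates is a clean way to organize the doubled-index bookkeeping, but it matches the intent of the paper's terse proof rather than departing from it.
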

\begin{proof}This result follows from the same proof as Lemma~\ref{Btheta} with doubled indices.
\end{proof}

Having checked that our autoequivalences satisfy condition~\eqref{thetacond}, we use Proposition~\ref{autoring} to define the $\Gamma$-graded ring
\[ D = \bigoplus_{\gamma \in \Gamma} D_\gamma = \bigoplus_{\gamma \in \Gamma} \Hom_{\gr A}(A, \iota_{\gamma} A) = \bigoplus_{(J,J') \in \ZZ_\fin \times \ZZ_\fin} \Hom_{\gr A}(A, \iota_{(J,J')}A).
\]

\begin{theorem}\label{dDequivalent} Let $\m = \kk \setminus \ZZ$ and let $f = z(z+ \m )$. There is an equivalence of categories 
\[\gr A(f) \equiv \gr (D, \Gamma).\]
\end{theorem}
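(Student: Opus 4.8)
The plan is to deduce Theorem~\ref{dDequivalent} exactly as Theorem~\ref{Bequivalent} was deduced, namely as an immediate corollary of Propositions~\ref{autoring} and~\ref{RSequiv} together with Lemma~\ref{dBtheta}. The two hypotheses of Proposition~\ref{RSequiv} that must be in place are: (i) the autoequivalences $\{\iota_\gamma \mid \gamma \in \Gamma\}$ and the isomorphisms $\{\Theta_{\gamma,\delta} \mid \gamma,\delta \in \Gamma\}$ satisfy condition~\eqref{thetacond}, and (ii) the module $P = \bigoplus_{\gamma \in \Gamma} \iota_\gamma A$ is a generator of $\gr A$. Condition (i) is precisely Lemma~\ref{dBtheta}, which has already been recorded. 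Condition (ii) is supplied by \cite[Lemma 5.17]{wonpic}, which (as noted at the start of this subsection) guarantees that $\{\iota_\gamma A \mid \gamma \in \Gamma\}$ generates $\gr A$ in the non-congruent root case. Since each $\iota_\gamma$ is an autoequivalence, each $\iota_\gamma A$ is automatically projective, so $P$ is in fact a projective generator; hence $\varrho_D^P$ is an isomorphism by the argument in the proof of Proposition~\ref{RSequiv}.

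Concretely, the steps are: first invoke Proposition~\ref{autoring} with $S = A(f)$, $G = \ZZ$, $\Gamma = \ZZ_\fin \times \ZZ_\fin$, and $\FFF_\gamma = \iota_\gamma$ — Lemma~\ref{dBtheta} supplies condition~\eqref{thetacond}, so the direct sum $D = \bigoplus_{\gamma \in \Gamma} \Hom_{\gr A}(A, \iota_\gamma A)$ is an associative $\Gamma$-graded ring with the multiplication rule $\varphi \cdot \psi = \Theta_{\delta,\gamma} \circ \iota_\delta(\varphi) \circ \psi$. Then apply Proposition~\ref{RSequiv} with the same data: the hypothesis that $P = \bigoplus_{\gamma \in \Gamma}\iota_\gamma A$ is a generator holds by \cite[Lemma 5.17]{wonpic}, so $H_A(P,-)$ is an equivalence $\gr(D,\Gamma) \sra \gr A(f)$. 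Composing with its quasi-inverse (or simply reading the equivalence in the stated direction) gives $\gr A(f) \equiv \gr(D,\Gamma)$.

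In truth there is no real obstacle here: all the work has been front-loaded into Lemma~\ref{dBtheta} (itself reduced to Lemma~\ref{Btheta} "with doubled indices") and into the companion paper's generation statement. The only point that warrants a sentence of care is checking that the generation hypothesis of Proposition~\ref{RSequiv} is exactly what \cite[Lemma 5.17]{wonpic} provides — that is, that "generates $\gr A$" in the sense used there coincides with "is a generator of $\gr(S,G)$" in the sense of del R\'io's theorem. This is routine, since a set of projective objects generates the abelian category precisely when their direct sum is a projective generator in del R\'io's sense. I would therefore write the proof as a one-line citation chain, mirroring the proof of Theorem~\ref{Bequivalent}:

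\begin{proof}
This is an immediate corollary of Propositions~\ref{autoring} and~\ref{RSequiv} together with Lemma~\ref{dBtheta}, using that $\{\iota_\gamma A \mid \gamma \in \Gamma\}$ generates $\gr A$ by \cite[Lemma 5.17]{wonpic}.
\end{proof}
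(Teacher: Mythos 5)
Your proposal is correct and mirrors the paper's proof exactly: the paper also dispatches Theorem~\ref{dDequivalent} as an immediate corollary of Propositions~\ref{autoring} and~\ref{RSequiv} together with Lemma~\ref{dBtheta}, with the generation hypothesis supplied by the companion paper's Lemma~5.17. Your added remarks spelling out why each hypothesis of Proposition~\ref{RSequiv} is met are accurate but not essential to the argument.
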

\begin{proof} This is an immediate corollary of Propositions~\ref{autoring} and \ref{RSequiv} together with Lemma~\ref{dBtheta}.
\end{proof}

Finally, we describe properties of and give a presentation for the ring $D$. For $\gamma \in \Gamma$ let $\varphi_{\gamma}$ be the map
\[ \varphi_{\gamma}: \left(\iota_{\gamma}A\right)_0 \sra \Hom_{\gr A}\left(A,\iota_{\gamma} A\right), \quad \varphi_{\gamma}(m)(a) = m\cdot a.
\]
We use the $\kk[z]$-module isomorphism $\varphi_{\gamma}$ to identify $D_{\gamma}$ with $\left(\iota_{\gamma} A\right)_0$. For $\gamma = (J, J') \in \Gamma$ we also define
\begin{align*} c_J &= \varphi_{(J, \emptyset)}(h_{(J, \emptyset)}) \in D_{(J,\emptyset)} \mbox{ and} \\
d_{J'} &= \varphi_{(\emptyset, J')}(h_{(\emptyset, J')}) \in D_{(\emptyset,J')}.
\end{align*}
For completeness, define $c_\emptyset = d_\emptyset = 1$.

\begin{lemma}\label{dBprops}Let $\m \in \kk \setminus \ZZ$ and let $\gamma = (I, I') \in \Gamma$.
 \begin{enumerate} \item \label{dBprops1} $(\iota_{\gamma}A)_0 = h_{\gamma} \kk[z]$.
\item \label{dBprops2} $c_Id_{I'} = d_{I'}c_I$ .
\item \label{dBprops3} The element $c_{I} d_{I'}$ freely generates $D_{\gamma}$ as a $D_{(\emptyset, \emptyset)}$-module.
\item \label{dBprops4} $c_I c_J = c_{I \cap J}^2 c_{I \oplus J}$ and $d_I d_J = d_{I \cap J}^2 d_{I \oplus J}$ and so $c_I d_{I'} = \prod_{i \in I} c_i \prod_{i' \in I'} d_{i'}$.
\item \label{dBprops6} For all $n,m \in \ZZ$ 
\[c_n^2 - n = c_m^2 - m\]
\[d_n^2 - n = d_m^2 - m\]
\[c_n^2 = d_n^2 - \m . \]
\item \label{dBprops7} $D$ is a commutative $\kk$-algebra generated by $\{c_n, d_n \mid n \in \ZZ\}$.
\end{enumerate}
\end{lemma}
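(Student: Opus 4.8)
The plan is to prove Lemma~\ref{dBprops} by following the blueprint of Lemma~\ref{mBprops}, using ``doubled indices'' throughout, and the main new ingredient — item \ref{dBprops6} — is a short explicit computation in $\kk[z]$.

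\emph{Parts \ref{dBprops1}--\ref{dBprops4}.} For \ref{dBprops1}, I would repeat the argument of Lemma~\ref{mBprops}\ref{mBprops1}: by \cite[Proposition 5.14]{wonpic}, $\iota_{(n,\emptyset)}A$ is the kernel of a surjection from $A$ onto an indecomposable module having $\kk$-dimension $1$ in each nonzero graded component, so $(\iota_{(n,\emptyset)}A)_0 = (z+n)\kk[z] = h_{(n,\emptyset)}\kk[z]$, and likewise $(\iota_{(\emptyset,n)}A)_0 = (z+n+\m)\kk[z]$; since the involutions commute and contribute disjoint ``codimension'' (the roots $z+j$ and $z+j'+\m$ are all distinct because $\m \notin \ZZ$), the intersection over $\gamma=(I,I')$ gives $(\iota_\gamma A)_0 = h_\gamma \kk[z]$. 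Part \ref{dBprops2} is immediate once \ref{dBprops1} and the multiplication formula of Proposition~\ref{autoring} are in hand, since all the relevant $\Hom$'s are computed by multiplication in the commutative ring $\kk(z)$ and the two products $c_I d_{I'}$ and $d_{I'} c_I$ are each given by multiplication by $h_{(I,I')}$. Part \ref{dBprops3} then follows exactly as in Lemma~\ref{mBprops}\ref{mBprops1}: $\varphi_\gamma$ identifies $D_\gamma$ with $h_\gamma\kk[z]$, and since $\iota_{(\emptyset,\emptyset)}=\Id$, right multiplication by $D_{(\emptyset,\emptyset)}=\kk[z]$ is just composition, so $c_I d_{I'}$ is a free generator. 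Part \ref{dBprops4} is the verbatim analogue of Lemma~\ref{mBprops}\ref{mBprops3}: compute $c_I c_J = \varphi_{(I\oplus J,\emptyset)}(h_{(I\cup J,\emptyset)})$ directly from the definition of multiplication (the $\sigma$'s act by restriction, hence as multiplication by $h^{-1}$), and then the identity $c_{I\cap J}^2 c_{I\oplus J} = c_I c_J$ follows from $(I\cap J)\sqcup(I\oplus J) = I\cup J$ as multisets; induction on $|I|$ gives the product formula, and the same for the $d$'s.

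\emph{Part \ref{dBprops6}.} This is the genuinely new step. Using \ref{dBprops3}--\ref{dBprops4} I identify $c_n^2, d_n^2 \in D_{(\emptyset,\emptyset)} = \kk[z]$ explicitly: arguing as in Lemma~\ref{mBprops}(3), $c_n^2 = \varphi_{(\emptyset,\emptyset)}\!\big((z+n)\big)$, identifying $c_n^2$ with the polynomial $z+n$, and similarly $d_n^2$ is identified with $z+n+\m$. Then $c_n^2 - n$ is identified with $z$ for every $n$, giving $c_n^2 - n = c_m^2 - m$; likewise $d_n^2 - n$ is identified with $z+\m$, giving the second relation; and $c_n^2 = d_n^2 - \m$ since $z+n = (z+n+\m) - \m$. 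These are honest identities in $\kk[z] = D_{(\emptyset,\emptyset)}$.

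\emph{Part \ref{dBprops7}.} Finally, commutativity: by \ref{dBprops4}, the $c_n$ and $d_n$ commute among themselves, and by \ref{dBprops2} each $c_n$ commutes with each $d_m$ (for $n=m$ this is \ref{dBprops2} directly; for $n\neq m$ it follows from the product formula together with the $n=m$ case), so all generators commute. That they generate $D$ as a $\kk$-algebra: by \ref{dBprops3}--\ref{dBprops4} every $D_\gamma$ is $\kk[z]$-spanned by the monomial $\prod_{i\in I} c_i \prod_{i'\in I'} d_{i'}$, so it suffices to generate $D_{(\emptyset,\emptyset)} = \kk[z]$, and by part \ref{dBprops6} we have $z = c_1^2 - 1 \in \kk[c_1]$ (identifying via $\varphi_{(\emptyset,\emptyset)}$), so $\kk[z]$ is generated by $c_1$ alone. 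Hence $\{c_n, d_n \mid n\in\ZZ\}$ generate $D$ as a commutative $\kk$-algebra. I do not anticipate a real obstacle here; the only point requiring care is checking in part \ref{dBprops1} that the ``codimension counts'' add up, i.e.\ that the roots indexing the $\iota_{(j,\emptyset)}$ and $\iota_{(\emptyset,j')}$ are pairwise distinct — which is exactly where the hypothesis $\m\notin\ZZ$ enters.
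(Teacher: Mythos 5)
Your proposal is correct and follows essentially the same route as the paper: mimic Lemma~\ref{mBprops} with doubled indices, establish $(\iota_\gamma A)_0 = h_\gamma\kk[z]$ by the codimension argument (noting $\m\notin\ZZ$ keeps the relevant roots distinct), deduce the module structure and product identities, and compute $c_n^2 = \varphi_{(\emptyset,\emptyset)}(z+n)$, $d_n^2 = \varphi_{(\emptyset,\emptyset)}(z+n+\m)$ directly. Your write-up is in fact slightly more explicit than the paper's in one spot — for part~\ref{dBprops7} the paper only cites parts \ref{dBprops2}--\ref{dBprops4}, but as you observe, one also needs part~\ref{dBprops6} to see that $z\in\kk[c_1]$, i.e.\ that the $c_n,d_n$ generate $D_{(\emptyset,\emptyset)}$.
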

\begin{proof} 
The arguments in this proof are similar to those found in Lemma~\ref{mBprops} and therefore \cite[Lemma 10.2]{smith}.
(1) For each $n \in \ZZ$, in \cite[Proposition 5.14]{wonpic}, we constructed $\iota_{(n ,\emptyset)} A$ by taking the kernel of the nonzero morphism to $X_0 \s{n}$ or $Y_0 \s{n}$. Since 
\[(z+n) A_0 = (\iota_{(n ,\emptyset)}^2 A)_0 \subseteq (\iota_{(n ,\emptyset)} A)_0\]
and $(\iota_{(n ,\emptyset)} A)_0$ has $\kk$-codimension $1$ in $A_0$, therefore 
\[(\iota_{(n ,\emptyset)} A)_0 = (z+n) A_0 = (z+n) \kk[z].
\]
By an analogous argument, $(\iota_{(\emptyset, n)} A)_0 = (z+n + \m )\kk[z]$. And since the autoequivalences commute 
\[ (\iota_\gamma A)_0 \subseteq (\iota_{(i,\emptyset)} A)_0 = (z+i) \kk[z] \mbox{ and } (\iota_\gamma A)_0 \subseteq (\iota_{(\emptyset, i')} A)_0 = (z+i +\m ) \kk[z] \]
 for each $i \in I$ and $i' \in I'$. Since $(\iota_\gamma A)_0$ has $\kk$-codimension $|I| + |I'|$ in $A_0$. Therefore, 
\[(\iota_\gamma A)_0 = h_\gamma \kk[z].\]

(2) This follows since the $\iota_{\gamma}$ act on morphisms by restriction. Both $c_Jd_{J'}$ and $d_{J'}c_J$ are given by multiplication by $h_{(J, \emptyset)}h_{(\emptyset, J')} = h_{(\emptyset,J')}h_{(J,\emptyset)}$. 

(3) Since $\iota_{(\emptyset, \emptyset)} =\Id_{\gr A}$, multiplication $D_{\gamma} \times D_{(\emptyset, \emptyset)} \sra D_{\gamma}$ sends $(g,h)$ to $g \circ h$. By part \ref{dBprops1}, $D_{\gamma} = \varphi\left(h_{\gamma} \kk[z]\right)$. Since $D_{(\emptyset, \emptyset)} = \kk[z]$ it follows that $D_{\gamma}$ is generated as a right $D_{(\emptyset, \emptyset)}$-module by $\varphi_{\gamma}\left(h_{\gamma}\right)$, or left multiplication by $h_{\gamma}$. Now by the definition of multiplication in $D$,
\[ c_Id_{I'} = \iota_{(\emptyset, I')}\left(\varphi(h_{(I, \emptyset)})\right) \varphi\left(h_{(\emptyset, I')} \right).
\]
and since $\iota_{(I, \emptyset)}$ acts on morphisms by restriction, $c_Id_{I'}$ is multiplication by $h_{(I,\emptyset)} h_{(\emptyset, I')} = h_{\gamma}$.

(4) This has the same proof as part~\ref{mBprops3} of Lemma~\ref{mBprops}.

(5) For $a \in A$,
\[ (c_n.c_n)(a) = \sigma_{(n,\emptyset)}\left((z+n)^2\right) (a) = (z+n)^{-1}(z+n)^2a = (z+n)a. 
\]
Hence, $c_n^2$ is given by multiplication by $z+n$, that is, 
\[c_n^2 = \varphi_{(\emptyset, \emptyset)}(z+n) . \]
Similarly,
\[d_n^2 = \varphi_{(\emptyset, \emptyset)}(z+n+ \m ),
\]
from which the claim follows.

(6) This follows from parts \ref{dBprops2}, \ref{dBprops3}, and \ref{dBprops4}. 
\end{proof}

\begin{proposition} Let $\m \in \kk \setminus \ZZ$. The $\Gamma$-graded ring $D$ has presentation
\[ D \cong \frac{\kk[c_n, d_n \mid n\in \ZZ]}{\left( c_n^2 - n = c_m^2 - m, c_n^2 = d_n^2 - \m \mid m,n \in \ZZ\right)}
\]
where $\deg c_n = (n, \emptyset)$ and $\deg d_n = (\emptyset, n)$.
\end{proposition}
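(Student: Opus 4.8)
The plan is to mimic the proof of Proposition~\ref{Bpresentation}, with the extra bookkeeping forced by the fact that here the base variable $z$ is not among the generators but is recovered internally as $c_0^2$. Write $T=\kk[c_n,d_n\mid n\in\ZZ]$ and let $\III\subseteq T$ be the ideal generated by the listed relations. By parts \ref{dBprops6} and \ref{dBprops7} of Lemma~\ref{dBprops}, the $c_n,d_n$ generate $D$ as a $\kk$-algebra and satisfy those relations; since $D$ is commutative, the assignment $c_n\mapsto c_n$, $d_n\mapsto d_n$ extends to a surjective $\kk$-algebra homomorphism $T\ra D$ that kills $\III$, hence induces a surjection $\Phi\colon T/\III\ra D$. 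It remains only to prove $\Phi$ is injective, i.e. that $\III$ already contains every relation holding in $D$.

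I would reduce to a single homogeneous component using the grading. Declaring $\deg c_n=(n,\emptyset)$ and $\deg d_n=(\emptyset,n)$ makes $T$ a $\Gamma$-graded ring, and since $n\oplus n=\emptyset$ in $\ZZ_\fin$ each generator of $\III$ is homogeneous of degree $(\emptyset,\emptyset)$ (the $n,m,\m$ are scalars); hence $\III$ is homogeneous, $T/\III$ is $\Gamma$-graded, and $\Phi$ is a graded homomorphism, so it suffices to check injectivity in each degree $\gamma=(I,I')\in\Gamma$. The key step is a normal form: writing $\zeta=c_0^2$, the relations give $c_j^2\equiv\zeta+j$ and $d_j^2\equiv\zeta+j+\m$ modulo $\III$, so a monomial of degree $\gamma$ — which involves $c_i$ to an odd power exactly for $i\in I$ and $d_{i'}$ to an odd power exactly for $i'\in I'$ — can be rewritten modulo $\III$ as $c_Id_{I'}\,g(\zeta)$ with $g\in\kk[\zeta]$ and $c_Id_{I'}=\prod_{i\in I}c_i\prod_{i'\in I'}d_{i'}$. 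Thus $(T/\III)_\gamma=c_Id_{I'}\,\kk[\zeta]$; in particular $(T/\III)_{(\emptyset,\emptyset)}$ is generated over $\kk$ by $\zeta$, and it is a genuine polynomial ring $\kk[\zeta]$ because $\Phi(\zeta)=c_0^2=z$ is transcendental over $\kk$ in $D$, as $D_{(\emptyset,\emptyset)}=\kk[z]$ by part \ref{dBprops3} of Lemma~\ref{dBprops}.

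To finish, note $\Phi$ sends $c_Id_{I'}\,g(\zeta)$ to $c_Id_{I'}\,g(z)\in D_\gamma$, and $D_\gamma=c_Id_{I'}\,\kk[z]$ is free of rank one over $\kk[z]$ on $c_Id_{I'}$ by part \ref{dBprops3} of Lemma~\ref{dBprops}; since $g\mapsto g$ under $\zeta\mapsto z$ is injective, $\Phi_\gamma$ is injective (indeed bijective), and we are done. Equivalently, in the computational style of Proposition~\ref{Bpresentation}: given a relation $r=0$ in $D$, assume $r$ homogeneous of degree $(I,I')$, write $r=c_Id_{I'}\beta$ with $\beta$ a $\kk$-linear combination of products of $c_j^2$'s and $d_j^2$'s, use the relations to replace $\beta$ modulo $\III$ by a polynomial $g(\zeta)$, and conclude $g=0$ from freeness, so $r\in\III$. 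The only non-formal point — and hence the main thing to get right — is the normal-form claim together with the transcendence of $\zeta$ in $D$; both, exactly as in the $B$ case, come down to the freeness assertion in part \ref{dBprops3} of Lemma~\ref{dBprops}.
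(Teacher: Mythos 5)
Your proof is correct and follows essentially the same approach as the paper: reduce to a homogeneous component of degree $(I,I')$, write each such element as $c_I d_{I'}$ times a degree-$(\emptyset,\emptyset)$ piece via Lemma~\ref{dBprops}, reduce that piece modulo the relations to a polynomial in one variable, and conclude from the freeness assertion of part~\ref{dBprops3}. Your explicit introduction of $\zeta = c_0^2$ as a proxy for $z$ inside $\kk[c_n,d_n \mid n\in\ZZ]$ is a helpful clarification of a point the paper's proof elides, since $z$ is not itself among the generators of the presentation.
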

\begin{proof}By Lemma~\ref{dBprops}, the elements $\{c_n\}$ and $\{d_n\}$ generate $D$ as a $\kk$-algebra and satisfy the relations $c_n^2 -n = c_m^2 -m$ and $c_n^2 = d_n^2 -\m $ for all $n,m \in \ZZ$. We need to show that the ideal generated by these relations contains all relations in $D$.

Let $r = 0$ be a relation in $D$. We may assume that $r$ is homogeneous of degree $(I,I')$. By Lemma~\ref{dBprops}, we can write 
\[ r = c_I d_{I'}\beta = 0
\]
where $\beta$ is a $\kk[z]$-linear combination of products of $c_j^2$'s and $d_{j}^2$'s for some integers $j$. By using the relations $c_j^2 = (z+j)$ and $d_{j}^2 = c_j^2 + \m $ for each $j$, we can rewrite $\beta$ in $D$ as a polynomial $g(z) \in \kk[z]$. Hence
\[ r = c_I d_{I'} g(z) = 0
\]
but since $B_{(I,I')}$ is freely generated as a right $B_{(\emptyset, \emptyset)} = \kk[z]$-module by $c_I d_{I'}$, this implies that $g(z) = 0$, so the relation $r$ was already in the ideal 
\[\left( c_n^2 - n = c_m^2 - m, c_n^2 = d_n^2 - \m \mid m,n \in \ZZ\right),\]
 completing our proof.
\end{proof}

\begin{remark}\label{halfcase} In the case that $\m = 1/2$, the ``picture" of the simple modules of $\gr A(f)$ looks like the picture of $\gr A_1$ given in Figure~\ref{fig.grA}, scaled by a factor of $1/2$. In \cite[Proposition 5.9]{wonpic}, the author was able to construct an autoequivalence of $\gr A(f)$ $\SSS^{1/2}$ whose square was naturally isomorphic to the shift functor $\SSS_{A(f)}$. This autoequivalence corresponds to shifting the picture of the simple modules by $1/2$. We note that in this case, the above ring $D$ is isomorphic to Smith's ring in \cite{smith}
\[ R = \frac{\kk[x_n \mid n \in \ZZ]}{\left(x_n^2 -n = x_m^2 +m \mid n, m \in \ZZ\right)}
\]
via the non-unital ring isomorphism $\varphi: D \to R$ where $\varphi(1)  = 2$, $\varphi(c_n) = x_{-2n}$, and $\varphi(d_n) = x_{-(2n+1)}$. This is a graded isomorphism if we identify $\ZZ_{\fin} \times \ZZ_{\fin}$ with $\ZZ_{\fin}$ where $(\{a_1, \dots a_n\}, \{b_1, \dots b_m\}$ is identified with $\{-2a_1, \dots -2a_n \}, \{-(2b_1 + 1), \dots -(2b_m + 1)\}$.
\end{remark}

\subsection{Congruent roots and the quotient category qgr-\emph{A}}

Let $\m \in \NNp$ so that we are in the congruent root case. The fact that the set $\{ \iota_J A \mid J \in \ZZ_\fin\}$ does not generate $\gr A$ is a significant difference from the other cases. In particular, we are unable to use Proposition~\ref{RSequiv}. One of the main obstructions is that there are infinitely many orbits of rank one graded projectives under the action of numerically trivial autoequivalences since numerically trivial autoequivalences fix the finite-dimensional simple modules, $Z\s{i}$. 

Recall that the \textit{quotient category} $\CCC/\DDD$ of an abelian category by a Serre subcategory was first defined by Gabriel in his thesis \cite{gabriel} as follows. The objects of $\CCC/\DDD$ are the objects of $\CCC$. If $M$ and $N$ are objects of $\CCC$, then
\[ \Hom_{\CCC/\DDD}(M,N) := \dirlim \Hom_{\CCC}(M', N/N'),
\]
where the direct limit is taken over all subobjects $M' \subseteq M$ and $N' \subseteq N$ where $M/M' \in \DDD$ and $N' \in \DDD$. Composition of morphisms in $\CCC/\DDD$ is induced by composition in $\DDD$. There is also a canonical exact functor, the \textit{quotient functor} $\pi: \CCC \sra \CCC/\DDD$. For an object $M$, $\pi M = M$ and for a morphism $f$, $\pi f$ is given by the image of $f$ in the direct limit.

We will consider the quotient category of $\gr A$ modulo its full subcategory of finite-dimensional modules. This is the same construction that Artin and Zhang \cite{AZ} use in their definition of the noncommutative projective scheme associated to an $\NN$-graded ring $R$. However, as $A$ is $\ZZ$-graded, the details are somewhat different. We will investigate this quotient category fairly explicitly.

Let $\fdim A$ denote the full subcategory of $\gr A$ consisting of all finite-dimensional modules. The only finite-dimensional simple modules are the shifts of $Z$ (see the start of section~\ref{catgrA}) so each object in $\fdim A$ has a composition series consisting entirely of shifts of $Z$. Since $A$ is a homomorphic image of $U(\mathfrak{sl}(2,\kk)$, every finite-dimensional $A$-module is semisimple \cite[8.5.12(ii)]{mcconnell}. Therefore every object in $\fdim A$ is a direct sum of shifts of $Z$. Since $\fdim A$ is a Serre subcategory, we can define
\[\qgr A = \gr A / \fdim A.\]
The next lemma allows us to write down a $\Hom$ set in $\qgr A$ in a very concrete way.

\begin{lemma} \begin{enumerate} \item \label{cdirlim} For every finitely generated graded right $A$-module $M$, there exists a unique smallest submodule $\kappa(M) \subseteq M$ such that $M/\kappa(M) \in \fdim A$.
\item For every finitely generated graded right $A$-module $N$, there exists a unique largest submodule $\tau (N) \subseteq N$ such that $\tau(N) \in \fdim A$.
\end{enumerate}
\end{lemma}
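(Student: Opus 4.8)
The plan is to construct $\kappa(M)$ and $\tau(N)$ directly and then verify the two universal properties.

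First I would handle part (2), which is the easier half. Given a finitely generated graded $A$-module $N$, consider the sum of all finite-dimensional graded submodules of $N$. Since $\fdim A$ is a Serre subcategory, any finite sum of finite-dimensional submodules is again finite-dimensional, so the collection of finite-dimensional submodules is directed under inclusion; call its union $\tau(N)$. The only obstacle is to see $\tau(N)$ is itself finite-dimensional (so that it actually lies in $\fdim A$ and is the largest such submodule). Here I would use that $N$ is noetherian (as $A$ is noetherian and $N$ is finitely generated, by Bavula--Hodges): the ascending chain of finite partial sums of finite-dimensional submodules must stabilize, so $\tau(N)$ equals one of them and is finite-dimensional. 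Uniqueness and maximality are then immediate from the construction.

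For part (1), the natural candidate for $\kappa(M)$ is the intersection of all submodules $M' \subseteq M$ with $M/M' \in \fdim A$. To show this works I must check $M/\kappa(M) \in \fdim A$, i.e. that the intersection over a possibly infinite family still has finite-dimensional quotient. The key point is that $M/M'$ is always \emph{semisimple} and a direct sum of shifts of $Z$ (as established just before the lemma using that finite-dimensional $A$-modules are semisimple, since $A$ is a homomorphic image of $U(\mathfrak{sl}_2)$), together with the fact that $M$ is noetherian. Concretely, the submodules $M'$ with $M/M'\in\fdim A$ are closed under finite intersection (since $M/(M_1'\cap M_2')$ embeds in $M/M_1' \oplus M/M_2'$, which is finite-dimensional), so they form a downward-directed family; by the noetherian (equivalently, here, the descending chain condition on this particular family, or a dimension-count) argument the intersection is attained by a finite subfamily, hence $\kappa(M) = M_1' \cap \cdots \cap M_k'$ for some finite collection and $M/\kappa(M)$ is finite-dimensional. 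Minimality and uniqueness then follow because any $M'$ with $M/M' \in \fdim A$ contains $\kappa(M)$ by definition of the intersection.

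The main obstacle I anticipate is the finiteness issue in both parts: a priori one is taking a sum (resp.\ intersection) over an infinite family of submodules, and one must rule out that $\tau(N)$ is infinite-dimensional (resp.\ that $M/\kappa(M)$ is infinite-dimensional). Both are resolved by the noetherian property of finitely generated $A$-modules, but it is worth being careful that the relevant families really are directed (for sums) or closed under finite intersection with finite-dimensional quotient (for intersections) so that the chain condition applies. Everything else — existence via the construction, and the universal properties — is formal once finiteness is in hand.
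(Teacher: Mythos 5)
Your construction of $\kappa(M)$ and $\tau(N)$ is the same one the paper uses, and part (2) matches the paper verbatim in spirit: sum of finite-dimensional submodules, stabilized by noetherianity. However, part (1) has a genuine gap in the finiteness step. You want the descending family of submodules $M'$ with $M/M' \in \fdim A$ to have its intersection attained by a finite subfamily, and you justify this by ``noetherian (equivalently, here, the descending chain condition on this particular family, or a dimension-count).'' Noetherianity of $M$ gives ACC on submodules, not DCC; a noetherian module can easily carry a strictly descending chain of submodules all of finite codimension (think of $\ZZ \supset 2\ZZ \supset 4\ZZ \supset \cdots$). So ``noetherian'' is not the reason the chain stops, and ``equivalently'' is wrong. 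The ``dimension-count'' clause is the right idea, but it is left entirely unexplained, and it is exactly the nontrivial content of the lemma.

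What is needed, and what the paper supplies, is that $\uHom_A(M,Z)$ is finite-dimensional over $\kk$ and is nonzero in only finitely many degrees (because $M$ is a quotient of a finite direct sum $\bigoplus_j A\s{e_j}$ and $Z$ is finite-dimensional). Since every $M/M'$ with $M/M' \in \fdim A$ is a direct sum of shifts of $Z$, its inclusion into $\fdim A$ is witnessed by maps $M \to Z\s{d_j}$, and hence $M'$ contains the intersection of the kernels of a $\kk$-basis of $\uHom_A(M,Z)$. That is a fixed finite intersection $\kappa(M)$, independent of $M'$, and $M/\kappa(M)$ embeds in a finite direct sum of shifts of $Z$, hence lies in $\fdim A$. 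This gives the uniform bound $\dim_\kk M/M' \le \dim_\kk \kappa$-cokernel that you were gesturing at with ``dimension-count,'' and it is this bound, not noetherianity, that produces the required DCC. With that fact inserted, your argument coincides with the paper's; without it, the proof of part (1) does not go through.
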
 
\begin{proof} (1) Since $M$ is finitely generated, $\uHom_A(M,Z)$ is finite-dimensional over $\kk$ and is nonzero in only finitely many degrees, say $d_1,\ldots, d_n$. Let $\III = \{ Z\s{d_1} \ldots, Z\s{d_n}\}$ and $\DDD$ be the full subcategory of $\gr A$ consisting of all finite direct sums of elements from $\III$. By \cite[Proposition 4.6]{won}, there exists a unique smallest submodule $M'$ such that $M/M' \in \DDD$. In particular, by the argument in \cite[Proposition 4.6]{won}, $M'$ is the intersection of all kernels of maps $M \sra Z\s{i}$. Let $\kappa (M) = M'$. Note that in fact $\kappa (M)$ is the unique smallest submodule such that $M/\kappa(M) \in \fdim A$ because $M/\kappa(M) \in \fdim A$ and any factor of $M$ in $\fdim A$ is also in $\DDD$.

(2) Since $N$ is finitely generated and $A$ is noetherian, $N$ is noetherian. Since the sum of two objects in $\fdim A$ is again an object in $\fdim A$, there exists a unique largest submodule of $N$ that is a direct sum of shifts of $Z$. Call this largest submodule $\tau (N)$. 
\end{proof}

The preceding lemma allows us to describe $\Hom_{\qgr A}(M,N)$ without any reference to a direct limit. In particular,
\[ \Hom_{\qgr A}(\pi M,\pi N) = \Hom_{\gr A}(\kappa(M), N/\tau(N)).
\]

\begin{proposition} \label{qgrproj} Let $P$ be a projective graded right $A$-module. Then $\pi P$ is projective in $\qgr A$ if and only if $\kappa(P) = P$.
\end{proposition}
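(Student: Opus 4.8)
The plan is to prove both implications directly from the concrete description of $\Hom$ in $\qgr A$ established just before the statement, namely $\Hom_{\qgr A}(\pi M, \pi N) = \Hom_{\gr A}(\kappa(M), N/\tau(N))$, together with the fact that $\pi$ is exact and essentially surjective. First I would record the easy half: if $\kappa(P) = P$, then $\Hom_{\qgr A}(\pi P, \pi N) = \Hom_{\gr A}(P, N/\tau(N))$ for every $N$. Since $\pi$ is exact, a surjection $\pi N \twoheadrightarrow \pi N''$ in $\qgr A$ can be lifted: one checks that every epimorphism in $\qgr A$ is, up to the identifications above, induced by a map $N' \to N''$ in $\gr A$ that becomes surjective after applying $\pi$ (its cokernel is finite-dimensional), so its image contains $\kappa(N'')$, hence $N''/\tau(N'') $ is a quotient of a submodule of $N'$ of finite codimension. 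Then projectivity of $P$ in $\gr A$ lets us lift any map $P \to N''/\tau(N'')$ through $N'$, and applying $\pi$ gives the required lift. One has to be a little careful phrasing "epimorphism in $\qgr A$'' concretely, but this is routine given the $\Hom$ description and the exactness of $\pi$.

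For the converse, I would argue contrapositively: suppose $\kappa(P) \neq P$, so $P/\kappa(P)$ is a nonzero finite-dimensional module, hence a nonzero direct sum of shifts of $Z$; in particular there is a nonzero map $P \twoheadrightarrow Z\s{i}$ for some $i$, equivalently $\kappa(P) \subsetneq P$ is a proper submodule with $P/\kappa(P) \in \fdim A$. Now consider the inclusion $\kappa(P) \hookrightarrow P$ in $\gr A$. Applying $\pi$ gives an isomorphism $\pi \kappa(P) \xrightarrow{\sim} \pi P$ in $\qgr A$ (its kernel and cokernel lie in $\fdim A$). I want to exhibit an epimorphism onto $\pi P$ through which the identity of $\pi P$ does not lift. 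A clean way: take the natural quotient map $q\colon P \to P/\kappa(P)$ and note $\pi q = 0$ since $P/\kappa(P) \in \fdim A$; combine with a surjection to get a suitable test diagram. Concretely, consider in $\gr A$ a short exact sequence $0 \to \kappa(P) \to E \to Z\s{i} \to 0$ which is \emph{non-split} — such exist because $\uExt^1_A(Z\s{i}, \kappa(P)) \neq 0$; here I would invoke self-extensions/extension data for the finite-dimensional simples $Z\s{i}$ from \cite{wonpic} (the relevant $\Ext$ computations for $\gr A$ in the congruent root case). Applying $\pi$, the map $E \to Z\s{i}$ becomes zero while $\kappa(P) \to E$ becomes an isomorphism $\pi E \xrightarrow{\sim} \pi \kappa(P) \xrightarrow{\sim} \pi P$, so we obtain an epimorphism $\pi E \twoheadrightarrow \pi P$ in $\qgr A$. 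If $\pi P$ were projective, the identity would lift to a section $\pi P \to \pi E$ in $\qgr A$, i.e. to a map $P \to E'$ in $\gr A$ for a suitable finite-codimension submodule $E' \subseteq E$ whose composite with $E' \hookrightarrow E \to Z\s{i}$ induces an isomorphism after $\pi$; chasing this back through the identifications produces a genuine splitting of the original extension (or of a cofinite sub-extension, which forces splitting of the whole since $Z\s{i}$ is simple), contradicting non-splitness. This yields $\kappa(P) = P$.

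The main obstacle I anticipate is the second paragraph: converting "$\pi P$ is not projective'' into a concrete failure-to-lift statement, because morphisms in $\qgr A$ are equivalence classes of roofs and one must be scrupulous about which submodules-of-finite-codimension appear and about the passage between a splitting in $\qgr A$ and a splitting in $\gr A$ (modulo $\fdim A$). The key technical input making this manageable is again the identity $\Hom_{\qgr A}(\pi M, \pi N) = \Hom_{\gr A}(\kappa(M), N/\tau(N))$, which eliminates the direct limit; with it, "$\mathrm{id}_{\pi P}$ factors through $\pi E$'' unwinds to an honest $\gr A$-morphism $\kappa(P) = P \to E/\tau(E)$ whose composite to $Z\s{i}$ is (up to $\fdim A$) the structure map, and since $Z\s{i}$ is simple finite-dimensional and $\tau(E) = 0$ (as $\kappa(P)$ has no finite-dimensional submodules when $P$ is a rank one projective — this should be checked, or handled by replacing $E$ appropriately), this directly contradicts non-splitness of $0 \to \kappa(P) \to E \to Z\s{i} \to 0$. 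I would also double-check that $\uExt^1_A(Z\s{i}, \kappa(P)) \neq 0$ genuinely holds in the congruent root case using the results of \cite{wonpic}; if for some reason the needed non-split extension is unavailable with the kernel exactly $\kappa(P)$, an alternative is to use the surjection $P \twoheadrightarrow P/\kappa(P)$ together with a non-split extension of $P/\kappa(P)$ by itself (self-extensions of $Z$ do exist in this setting) and run the same lifting argument with $M = P$, $N = $ that extension. Either way, the crux is the careful bookkeeping in $\qgr A$, not any deep new computation.
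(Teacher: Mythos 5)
Your forward direction (assuming $\kappa(P)=P$, showing $\pi P$ projective) is fine and essentially the paper's argument: reduce an epi $\pi M \twoheadrightarrow \pi N$ to a genuine surjection in $\gr A$ after replacing $M$ by $\kappa(M)$ and $N$ by the image, then lift via projectivity of $P$ in $\gr A$ using $\Hom_{\qgr A}(\pi P, \pi N) = \uHom_A(P,N)$.

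The converse, however, has a genuine gap. Your test object is a non-split extension $0 \to \kappa(P) \to E \to Z\langle i\rangle \to 0$, and you build the epimorphism $\pi E \to \pi P$ as the composite $\pi E \xleftarrow{\sim} \pi\kappa(P) \xrightarrow{\sim} \pi P$. But both maps are isomorphisms in $\qgr A$ (the cokernels $Z\langle i\rangle$ and $P/\kappa(P)$ lie in $\fdim A$, and the kernels vanish), so $\pi E \to \pi P$ is itself an isomorphism. Every isomorphism splits, for any object; so no contradiction with projectivity of $\pi P$ can be extracted this way. The deeper issue is that $Z\langle i\rangle$ is killed by $\pi$, so any extension by $Z\langle i\rangle$ becomes trivial (indeed invertible) in $\qgr A$ — it cannot serve as a test epimorphism. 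Your fallback (self-extension $0 \to Z\langle i\rangle \to W \to Z\langle i\rangle \to 0$) is even worse: that whole sequence becomes $0 \to 0 \to 0 \to 0 \to 0$ after $\pi$.

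What the paper does instead avoids this problem. From the sequence $0 \to \kappa(P) \to P \to \bigoplus Z\langle i\rangle \to 0$ and $\pd Z = 2$, it first deduces that $\kappa(P)$ is \emph{not} projective but has projective dimension exactly $1$, hence a resolution $0 \to P_1 \to P_0 \xrightarrow{d_0} \kappa(P) \to 0$ in $\gr A$. Now the epi $\pi d_0: \pi P_0 \twoheadrightarrow \pi\kappa(P) = \pi P$ is the test map: it survives $\pi$ as a genuine (non-iso) surjection, and a splitting in $\qgr A$ would, because neither $P_0$ nor $\kappa(P)$ has finite-dimensional submodules (so the relevant $\Hom$ sets in $\qgr A$ coincide with those in $\gr A$), lift to a genuine splitting of $d_0$ in $\gr A$, forcing $\kappa(P)$ to be projective — a contradiction. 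To repair your argument you would need a surjection onto $\pi P$ in $\qgr A$ that is \emph{not} an isomorphism and does not split; an extension with finite-dimensional quotient cannot supply this, but a projective resolution of $\kappa(P)$ can.
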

\begin{proof}Suppose $P$ is projective in $\gr A$ and $\kappa(P) = P$. Let $M$ and $N$ be graded $A$-modules and let $g \in \Hom_{\qgr A}(\pi M, \pi N)$ be an epimorphism. We show that for any morphism $h: \pi P \sra \pi N$, there exists a lift $j: \pi P \sra \pi M$ so that $\pi P$ is projective. 

To show projectivity of $\pi P$, we are only concerned with these morphisms in $\qgr A$. Therefore, we may replace $M$ by $\kappa(M)$ since $\pi M = \pi \kappa(M)$ and similarly we may replace $N$ by $N/ \tau(N)$. Then $g$ is represented by a morphism $\ol{g} \in \uHom_A(M, N)$ such that $\coker (\ol g) \in \fdim A$. Since $\coker(\ol{g}) \in \fdim A$, $\pi N = \pi \im \ol{g}$, and so we may replace $N$ with $\im \ol{g}$ and assume that $\ol{g}$ is surjective.

Now note that since $\kappa(P) = P$, we have $\Hom_{\qgr A}(\pi P, \pi N) = \uHom_A(P,N)$. So $h$ is represented by a morphism $\ol{h}: P \sra N$, which by the projectivity of $P$ in $\gr A$ lifts to a morphism $ \ol{j}: P \sra M$. Let $j = \pi(\ol{j})$.

Conversely, suppose that $\kappa(P) \neq P$. Then $\kappa(P)$ fits into the exact sequence
\[ 0 \ra \kappa(P) \ra P \ra \bigoplus_{i \in I} Z \s{i} \ra 0
\]
for some finite set of integers $I$. By \cite[Theorem 5]{bav}, $Z$ has projective dimension $2$, so this exact sequence shows that $\kappa(P)$ is not projective. Since this sequence is exact, $\pd (\kappa(P)) \leq \max \{ \pd (P), \pd (Z) - 1\}$. Therefore, $\kappa(P)$ has projective dimension 1 so has a projective resolution in $\gr A$:
\begin{equation}\label{kappaexact} 0 \ra P_1 \overset{d_1}{\ra} P_0 \overset{d_0}{\ra} \kappa(P) \sra 0.
\end{equation}

Suppose for contradiction that $\pi P = \pi\kappa(P)$ is projective in $\qgr A$. Since $\pi$ is an exact functor, we have the following exact sequence in $\qgr A$:
\[ 0 \ra \pi P_1 \overset{\pi d_1}{\ra} \pi P_0 \overset{\pi d_0}{\ra} \pi \kappa(P) \sra 0.
\]
Since we assumed $\pi \kappa(P)$ is projective, there exists a splitting $h: \pi \kappa(P) \sra \pi P_0$ such that $\pi d_0 \circ h = \Id_{\pi \kappa(P)}$. The splitting $h$ is represented by a morphism $\ol{h} \in \uHom_A( \kappa(P), P_0)$, since $P_0$ has no finite-dimensional submodules. Now $\ol{h}$ gives a splitting of \eqref{kappaexact}, since $\pi(\ol{h} \circ d_0) = \Id_{\pi \kappa(P)}$ and since $\kappa(P)$ has no finite-dimensional submodules, we know that $\Hom_{\qgr A}(\pi \kappa(P), \pi \kappa(P)) = \uHom_A(\kappa(P), \kappa(P))$. Hence, $\ol{h} \circ d_0 = \Id_{\kappa(P)}$. But a splitting of \eqref{kappaexact} shows that $\kappa(P)$ is projective, which is a contradiction.
\end{proof}

\begin{corollary} \label{qgrproj2} If $P$ is a rank one projective in $\gr A$ then $\pi P$ is projective in $\qgr A$ if and only if for each $n \in \ZZ$, $P$ surjects onto exactly one of $X\s{n}$ or $Y \s{n}$.
\end{corollary}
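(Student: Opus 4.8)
The plan is to chain together Proposition~\ref{qgrproj} with the explicit description of $\kappa$ from the proof of the preceding lemma and with Lemma~\ref{projXYZ}. By Proposition~\ref{qgrproj}, $\pi P$ is projective in $\qgr A$ if and only if $\kappa(P) = P$, so it suffices to translate the condition $\kappa(P) = P$ into a statement about which simple modules $P$ surjects onto.

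First I would recall that, as established just before the statement, every object of $\fdim A$ is a direct sum of shifts of $Z$, and that $\kappa(P)$ is precisely the intersection of the kernels of all homomorphisms $P \sra Z\s{i}$, $i \in \ZZ$. Hence $\kappa(P) = P$ if and only if $\uHom_A(P, Z\s{i}) = 0$ for every $i \in \ZZ$. Since $Z\s{i}$ is simple, a nonzero homomorphism $P \sra Z\s{i}$ is automatically surjective, and conversely a surjection $P \twoheadrightarrow Z\s{i}$ is a nonzero element of $\uHom_A(P,Z\s{i})$; therefore $\kappa(P) = P$ is equivalent to: $P$ does not surject onto $Z\s{i}$ for any $i \in \ZZ$.

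Next I would invoke Lemma~\ref{projXYZ}: since we are in the congruent root case ($\m \in \NNp$), for each $n \in \ZZ$ the rank one projective $P$ surjects onto exactly one of $X\s{n}$, $Y\s{n}$, $Z\s{n}$. Consequently, ``$P$ does not surject onto $Z\s{n}$'' is equivalent, for each fixed $n$, to ``$P$ surjects onto exactly one of $X\s{n}$ or $Y\s{n}$''. Combining the three equivalences gives the corollary.

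There is essentially no obstacle here: the only point that requires a word of care is the identification $\kappa(P) = P \iff \uHom_A(P, Z\s{i}) = 0$ for all $i$, which follows immediately from the construction of $\kappa(P)$ as an intersection of kernels of maps to shifts of $Z$ in the proof of the preceding lemma together with the fact that $\fdim A$ consists of direct sums of shifts of $Z$; everything else is a direct appeal to Proposition~\ref{qgrproj} and Lemma~\ref{projXYZ}.
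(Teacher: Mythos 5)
Your proof is correct and matches the paper's approach, which is simply to cite Proposition~\ref{qgrproj} and Lemma~\ref{projXYZ}; you have merely filled in the (straightforward) details of why $\kappa(P)=P$ is equivalent to $P$ having no surjections onto shifts of $Z$, which is the intended connective tissue.
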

\begin{proof} This follows immediately from Proposition~\ref{qgrproj} and Lemma~\ref{projXYZ}.
\end{proof}

\begin{proposition}\label{qgrgenerator} Let $\PPPP = \{P_i\}_{i \in I}$ be a set of rank one projective modules in $\gr A$. If $\PPPP$ generates every shift of $X$ and $Y$ then $\pi \PPPP = \{\pi P_i\}_{i \in I}$ generates $\qgr A$. In particular, $\qgr A$ has enough projectives.
\end{proposition}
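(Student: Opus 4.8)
The statement has two parts: first, that $\pi\PPPP$ generates $\qgr A$ whenever $\PPPP$ generates every shift of $X$ and $Y$ in $\gr A$; and second, as a consequence, that $\qgr A$ has enough projectives. For the first part, I would argue directly from the definition of a generating set: I need to show that every nonzero object of $\qgr A$ admits a nonzero map from some $\pi P_i$, or equivalently (since $\qgr A$ is a quotient of a noetherian category and the $\pi P_i$ are all projective by Corollary~\ref{qgrproj2}, once we check the hypothesis) that for every nonzero $\pi M$ there is a nonzero morphism $\pi P_i \to \pi M$ for some $i$. The key point is that $\fdim A$ consists precisely of direct sums of shifts of $Z$, so a module $M \in \gr A$ has $\pi M \neq 0$ if and only if $M$ has a subquotient isomorphic to a shift of $X$, $Y$, or $M_\lambda$.

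First I would recall from \cite{wonpic} that $\PPPP$ generating all shifts of $X$ and $Y$ in $\gr A$ — together with the fact that the rank one projectives and their quotients $X\s{n}, Y\s{n}$ are all that one needs — in fact forces $\PPPP$ to generate all of $\gr A$ modulo $\fdim A$; more precisely, I would show the Serre subcategory generated by $\PPPP$ and $\fdim A$ is all of $\gr A$. For this, take any simple object $S$ of $\gr A$: if $S$ is a shift of $Z$ it lies in $\fdim A$; if $S$ is a shift of $X$ or $Y$ it is a quotient of some $P_i$ by hypothesis; and if $S = M_\lambda$ for $\lambda \in \kk\setminus\ZZ$, then $S$ is a quotient of a rank one projective which, by Corollary~\ref{qgrproj2} and the classification, surjects onto $M_\lambda$ — but I should instead observe that $M_\lambda$ is a quotient of a shift of some $P_i$ because every rank one graded projective is generated by shifts of $X$ and $Y$ together with the $M_\lambda$'s, and conversely the standard module $A$ surjects onto $M_\lambda$; cleanest is to note $A$ itself is a quotient of finitely many shifts of the $P_i$'s since $\PPPP$ generates all $X\s{n},Y\s{n}$ and these generate $A/zA$-type pieces. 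I would then invoke the general fact that if a set of objects generates $\gr A$ modulo a Serre subcategory $\DDD$, its image generates the quotient $\gr A/\DDD$: given $0 \neq \pi M$, pick a nonzero subobject, use noetherianity to find a simple subquotient $S \notin \fdim A$, and pull back a nonzero map $P_i\s{n} \to S$ along the generation to get a nonzero map $\pi P_i \to \pi M$ (after shifting, absorbing the shift into the index set, or using that $\SSS$ preserves $\PPPP$ up to the generating hypothesis).

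For the ``enough projectives'' conclusion, the point is that by Corollary~\ref{qgrproj2} each $\pi P_i$ is projective in $\qgr A$ precisely because each $P_i$ surjects onto exactly one of $X\s{n}$ or $Y\s{n}$ for each $n$ — but this is exactly the hypothesis that $\PPPP$ generates every shift of $X$ and $Y$: if some $P_i$ surjected onto $Z\s{n}$ it would fail to surject onto $X\s{n}$ or $Y\s{n}$, yet we need the collection as a whole to hit every $X\s{n}$ and $Y\s{n}$, which one arranges by choosing $\PPPP$ appropriately (and in any case the relevant $P_i$ used in the generation argument can be taken to be ones with $\kappa(P_i) = P_i$, since a projective surjecting onto $Z\s{n}$ can be replaced by its submodule $\kappa(P_i)$). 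So $\pi\PPPP$ is a generating set consisting of projective objects, which is the definition of having enough projectives (every object is a quotient of a direct sum of objects from $\pi\PPPP$).

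**Main obstacle.** The delicate step is the reduction showing that generating every shift of $X$ and $Y$ suffices to generate $\gr A$ modulo $\fdim A$ — specifically handling the simple modules $M_\lambda$ for $\lambda \in \kk\setminus\ZZ$, which are not quotients of the $X\s{n}$ or $Y\s{n}$. I expect the resolution is that the algebra $A$ (hence every $A\s{n}$) is generated as a module by images of the $P_i$: since $\PPPP$ generates all $X\s{n}, Y\s{n}$, and since $A/zA \cong \bigoplus$ of certain $X\s{n}$'s (up to finite-dimensional pieces), some finite sum of shifts of the $P_i$ surjects onto $A/zA$, and then a projective cover/lifting argument over the graded-local-at-each-$z$-value structure lets one conclude that the $P_i$ generate $A$ itself, whence they generate all of $\gr A$. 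Getting this lifting argument right — or finding the cited substitute in \cite{wonpic} — is where the real work lies; everything else is formal manipulation of the quotient category.
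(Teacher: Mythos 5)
There is a genuine gap, and the student partly acknowledges it: the step of showing that $\PPPP$ generates $\gr A$ modulo $\fdim A$ (equivalently, handling the simples $M_\lambda$ and the module $\kappa(A)$) is exactly ``where the real work lies,'' and you never actually supply it. You gesture at ``note $A$ itself is a quotient of finitely many shifts of the $P_i$'s since $\PPPP$ generates all $X\s{n}, Y\s{n}$,'' but generating the simple quotients of a module does not in general imply generating the module itself; some length or Nakayama-type argument is required. The paper's proof supplies precisely this: choose a \emph{maximal embedding} $P\hookrightarrow\kappa(A)\s{n}$; then since $A$ has Krull dimension $1$ the cokernel has finite length, and since $\uHom_A(\kappa(A),Z)=0$ the cokernel's indecomposable summands have no $Z$ factors, so an induction on length using the generation hypothesis on shifts of $X$ and $Y$ gives a surjection from a finite direct sum of the $P_j$ onto the cokernel, hence (by projectivity) onto $\kappa(A)\s{n}$. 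Note the paper does not need to invoke the $M_\lambda$ at all.

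There is also a concrete error in your treatment of the ``enough projectives'' conclusion. You claim that the hypothesis ``$\PPPP$ generates every shift of $X$ and $Y$'' is ``exactly'' the condition for each $\pi P_i$ to be projective; it is not. The hypothesis constrains the collection, not any individual $P_i$, which could well surject onto some $Z\s{n}$ and hence fail the criterion of Corollary~\ref{qgrproj2}. Your proposed repair --- replace such a $P_i$ by $\kappa(P_i)$ --- does not work: the proof of Proposition~\ref{qgrproj} shows that when $\kappa(P_i)\neq P_i$, the module $\kappa(P_i)$ has projective dimension $1$ in $\gr A$ (and $\pi\kappa(P_i)=\pi P_i$ is still not projective in $\qgr A$). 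The correct resolution, which the paper uses, is to invoke \cite[Lemma 3.36]{wonpic} to actually \emph{construct} a set $\PPPP$ in which every $P_i$ surjects only onto shifts of $X$ and $Y$; Corollary~\ref{qgrproj2} then makes every $\pi P_i$ projective, and the first part of the proposition makes them a generating set.
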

\begin{proof}Suppose $\PPPP$ is a set of projective modules in $\gr A$ which generates every shift of $X$ and $Y$. Since the shifts of $A$ generate $\gr A$, likewise the shifts of $\pi A$ generate $\qgr A$. Since $\pi A = \pi \kappa(A)$, we will show that $\PPPP$ generates every shift of $\kappa(A)$ and hence $\pi \PPPP$ generates every shift of $\pi \kappa (A)$. 

Let $P \in \PPPP$ and choose a maximal embedding $\varphi: P \sra \kappa(A) \s{n}$. It suffices to construct a surjection ${\psi: \bigoplus_{j \in J} P_j \sra \kappa(A)/P}$ for some $J\subseteq I$. This is because, by the projectivity of the $P_j$, there exists a lift $\overline{\psi}: \bigoplus_{j \in J} P_j \sra \kappa(A)\s{n}$ and because $\im \varphi + \im \overline{\psi} = \kappa(A)\s{n}$.

Since $A$ has Krull dimension 1, the quotient $\kappa(A)\s{n}/P$ has finite length. Further, we know that $\uHom_A(\kappa(A) , Z) = 0$, so $\kappa(A)\s{n}/P$ is a direct sum of indecomposables, none of which has a factor of $Z$. It thus suffices to show that $\PPPP$ generates every such indecomposable. 

Without loss of generality, suppose $\kappa(A)\s{n}/P$ is an indecomposable with a factor of $X \s{i}$. We induct on the length of the indecomposable. By hypothesis, some $P_0 \in \PPPP$ surjects onto $X\s{i}$. By the projectivity of $P_0$, this surjection then lifts to a map $g: P_0 \sra \kappa(A)\s{n}/P$. If this is a surjection, then we are done. Otherwise, $P_0$ surjects onto a proper submodule of $\kappa(A)\s{n}/P$. Again, it suffices to give a surjection onto the cokernel of $g$. But now note that since $\kappa(A)\s{n}$ surjects onto $(\kappa(A)\s{n}/P)/\im(g)$, then $(\kappa(A\s{n}/P)/\im(g)$ again has no factor of $Z$ and has shorter length. By induction, $\PPPP$ generates $(\kappa(A)\s{n}/P)/\im(g)$ and thus $\PPPP$ generates $\kappa(A)\s{n}$.

Now $\qgr A$ has enough projectives by Corollary~\ref{qgrproj2} and \cite[Lemma 3.36]{wonpic}.
\end{proof}

\begin{theorem}\label{qgrequivgr} Let $\m \in \NNp$. There is an equivalence of categories 
\[\gr A\left(z^2\right) \equiv \qgr A\left(z(z+\m )\right).\]
\end{theorem}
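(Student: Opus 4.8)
The plan is to replay the construction of Section~\ref{mthcr} inside the quotient category $\qgr A$, where now $A=A(z(z+\m))$, producing a $\ZZ_\fin$-graded ring $B'$ with $\qgr A\equiv\gr(B',\ZZ_\fin)$, and then to identify $B'$ with the ring $B$ of Proposition~\ref{Bpresentation}; the theorem then follows by combining with Theorem~\ref{Bequivalent}.

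First I would descend the involutions. Each involution $\iota_J$ of $\gr A$ from \cite[Proposition 5.13]{wonpic} fixes every shift $Z\s{n}$ and is exact, so it preserves the Serre subcategory $\fdim A$ (all of whose objects are direct sums of shifts of $Z$) and therefore induces an autoequivalence of $\qgr A$, still written $\iota_J$. The isomorphisms $\iota_J\cong\prod_{j\in J}\iota_j$ and $\iota_n^2\cong\Id$ — the latter realized on a rank one projective by left multiplication by $(z+n)^{-2}$ — descend as well, so I may define $\sigma_J$ and $\Theta_{I,J}=\iota_{I\oplus J}(\sigma_{I\cap J})$ exactly as in Section~\ref{mthcr}. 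Condition~\eqref{thetacond} is then checked by the computation of Lemma~\ref{Btheta} word for word: every morphism occurring is left multiplication by an element of $\kk(z)$, and for rank one projectives $P,Q$ with $\pi P,\pi Q$ projective one has $\Hom_{\qgr A}(\pi P,\pi Q)=\Hom_{\gr A}(P,Q)$, since $\kappa(P)=P$ (Proposition~\ref{qgrproj}, because $\pi P$ is projective) and $\tau(Q)=0$ (a rank one projective embeds in the domain $\QgrA$, hence has no nonzero finite-dimensional submodule).

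Next I would build the ring and the equivalence. Fix a rank one projective $P_0$ in $\gr A$ which surjects onto exactly one of $X\s{n},Y\s{n}$ for every $n$ — equivalently, by Corollary~\ref{qgrproj2}, one with $\pi P_0$ projective in $\qgr A$; such $P_0$ exist by \cite[Lemma 3.36]{wonpic} together with Corollary~\ref{qgrproj2}. By Proposition~\ref{autoring}, $B':=\bigoplus_{J\in\ZZ_\fin}\Hom_{\qgr A}(\pi P_0,\iota_J\pi P_0)$ is an associative $\ZZ_\fin$-graded ring. Since each $\iota_J$ is numerically trivial — swapping $X\s{j}\leftrightarrow Y\s{j}$ for $j\in J$ and fixing the remaining simples — the family $\{\iota_J P_0\mid J\in\ZZ_\fin\}$ surjects onto every shift of $X$ and of $Y$; hence by Proposition~\ref{qgrgenerator} the objects $\{\iota_J\pi P_0\}$ form a projective generating family for $\qgr A$. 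The argument of Proposition~\ref{RSequiv}, i.e. del R\'io's \cite[Theorem 7(c)]{delrio}, then applies with $\gr(S,G)$ replaced by $\qgr A$: the canonical comparison map is an isomorphism by the very computation in the proof of Proposition~\ref{RSequiv}, and so the functor of \eqref{HSfunctor}, $M\mapsto\bigoplus_{J\in\ZZ_\fin}\Hom_{\qgr A}(\iota_J\pi P_0,M)$, gives an equivalence $\qgr A\ \equiv\ \gr(B',\ZZ_\fin)$.

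Finally I would identify $B'$ with $B$. Because $\iota_n^2(\pi P_0)=(z+n)^2\,\pi P_0$ in $\qgr A$ and $(\iota_n P_0)_0=(z+n)^2(P_0)_0$ — exactly as for $A(z^2)$ in the multiple-root case — the arguments of Lemma~\ref{mBprops} and Proposition~\ref{Bpresentation} apply equally with $A(z^2)$ replaced by $P_0$, yielding generators $b_n$ with $b_n^2=(z+n)^2$ and the presentation $B'\cong\kk[z][b_n\mid n\in\ZZ]/(b_n^2=(z+n)^2)=B$. Therefore, using Theorem~\ref{Bequivalent} and the isomorphism $B'\cong B$ of $\ZZ_\fin$-graded rings,
\[ \qgr A(z(z+\m))\ \equiv\ \gr(B',\ZZ_\fin)\ \equiv\ \gr(B,\ZZ_\fin)\ \equiv\ \gr A(z^2). \]
The step I expect to be the main obstacle is transporting del R\'io's equivalence criterion from a genuine graded module category to the quotient category $\qgr A$: one must verify that $\{\iota_J\pi P_0\}$ really is a projective generating family there and that the comparison isomorphism survives the passage to the quotient — precisely what Corollary~\ref{qgrproj2} and Proposition~\ref{qgrgenerator} are set up to supply. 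A secondary point, drawing on \cite{wonpic}, is confirming that the congruent-root involutions satisfy $\iota_n^2\cong\Id$ with the same scalar $(z+n)^{-2}$ (and $(\iota_n P_0)_0=(z+n)^2(P_0)_0$) as the multiple-root ones, since that is what forces $B'$ to equal $B$ rather than a twisted variant.
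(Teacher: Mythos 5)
Your overall strategy is genuinely different from the paper's, and it contains a real gap. The paper proves the theorem by an explicit construction: it defines a functor $\GGG$ on the full subcategory of canonical rank one projectives of $\gr A(z^2)$ by matching the simple factors $X\s{n}, Y\s{n}$ of a projective $P$ to those of a projective $P'$ in $\gr A(z(z+\m))$ with $\pi P'$ projective, verifies via the structure-constant table that the maximal embeddings $\theta_{P,Q}$ and $\theta_{P',Q'}$ are literally equal as elements of $\kk(z)$, and then extends $\GGG$ to a full equivalence using \cite[Lemma 4.2]{wonpic} and Proposition~\ref{qgrgenerator}. Corollary~\ref{cBequiv} is then a one-line consequence of this theorem and Theorem~\ref{Bequivalent}. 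You instead propose to reverse the logical order: first prove $\qgr A(z(z+\m)) \equiv \gr(B,\ZZ_\fin)$ directly by replaying the machinery of Propositions~\ref{autoring}--\ref{RSequiv} inside $\qgr A$, then conclude the theorem by composing with Theorem~\ref{Bequivalent}.

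The gap you flag as ``the main obstacle'' is in fact fatal to the proposal as written. Proposition~\ref{RSequiv} rests on del R\'io's \cite[Theorem 7(c)]{delrio}, which is a statement about graded module categories over graded rings: it concerns a bigraded $R$-$S$-bimodule $P$ and the functor $H_S(P,-)\colon\gr(S,G)\to\gr(R,\Gamma)$. There is no graded ring $S$ with $\gr(S,G)=\qgr A$ available a priori --- indeed, producing one is precisely what the theorem asserts --- so the criterion cannot be applied ``with $\gr(S,G)$ replaced by $\qgr A$'' without proving a more general Morita-type result for abelian categories with a compact projective generating family. Such a result exists in the literature, but it is not what the paper cites, and invoking it would require a separate argument showing that the comparison map $\varrho$ makes sense and is an isomorphism in this setting. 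Verifying condition~\eqref{thetacond} and producing the projective generating family $\{\iota_J\pi P_0\}$ (which you do correctly, via Corollary~\ref{qgrproj2} and Proposition~\ref{qgrgenerator}) does not by itself close this gap.

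A secondary concern is your assertion that, in the congruent root case, $\iota_n^2$ is realized on rank one projectives by left multiplication by $(z+n)^{-2}$ and that $(\iota_n P_0)_0=(z+n)^2(P_0)_0$, ``exactly as for $A(z^2)$.'' The paper states these formulas only in the multiple-root case (Section~\ref{mthcr} and Lemma~\ref{mBprops}), where they rest on \cite[Proposition 5.13]{wonpic} applied at $\m=0$ and on the fact that $\sigma^n(z^2)=(z+n)^2$. For $\m\in\NNp$ one has $\sigma^n(f)=(z+n)(z+n+\m)$, the structure constants of a rank one projective take four possible values rather than two (cf.\ the table in the paper's proof), and $\iota_n$ alters $c_n$ and $c_{n-\m}$ rather than $c_n$ alone. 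None of this is fatal in principle --- one expects the congruent-root involutions to produce a ring isomorphic to $B$, since that is what the theorem ultimately implies --- but it is exactly the kind of computation the paper's proof carries out explicitly (and avoids having to carry out for the involutions themselves), so it cannot simply be asserted. If you want to pursue your route you would need both a quotient-category version of the Morita/del R\'io criterion and a direct verification, from \cite[Proposition 5.13]{wonpic} at $\m\in\NNp$, of the formulas for $\iota_n^2$ and $(\iota_n P_0)_0$.
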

\begin{proof} Let $f = z^2$ and $g = z(z+\m )$. Let $\PPP$ be the full subcategory of $\gr A(f)$ consisting of direct sums of the canonical rank one projective $A(f)$-modules (as described after Lemma~\ref{projXYZ}). Let $\PPP'$ be the full subcategory of $\qgr A(g)$ consisting of the images in $\qgr A(g)$ of direct sums of the canonical rank one projectives of $\gr A(g)$ that remain projective in $\qgr A(g)$. We will define an equivalence of categories $\GGG: \PPP \sra \PPP'$. We will then use \cite[Lemma 4.2]{wonpic} to extend this to an equivalence $\gr A(f) \equiv \qgr A(g)$.

First, we define $\GGG$ on objects. Let $P$ be a canonical rank one projective $A(f)$-module. By Lemma~\ref{projXYZ}, for each $n \in \ZZ$, $P$ surjects onto exactly one of $X^f\s{n}$ and $Y^f\s{n}$. Define $P'$ to be the canonical projective object of $\gr A(g)$ with simple factors corresponding to those of $P$, that is, for all $n \in \ZZ$, if $F_n(P) = X^f\s{n}$ then $F_n(P') = X^g\s{n}$ and if $F_n(P) = Y^f\s{n}$ then $F_n(P') = Y^g\s{n}$. Such a projective $P'$ exists by \cite[Lemma 3.36]{wonpic}. By Corollary~\ref{qgrproj2}, $\pi P'$ is a projective object of $\qgr A(g)$. Now define $\GGG(P) = \pi P'$. By abuse of notation, we will also refer to the object $P'$ of $\gr A(g)$ as $\GGG(P)$. For a direct sum of canonical rank one projectives, $P = \bigoplus_{i \in I} P_i$, define $\GGG(P) := \bigoplus_{i \in I} \GGG(P_i)$.

Suppose now that $P = \bigoplus_{i \in \ZZ} (p_i)x^i$ and $Q = \bigoplus_{i \in \ZZ} (q_i)x^i$ are canonical rank one projectives of $\gr A(f)$ with structure constants $\{c_i\}$ and $\{d_i\}$, respectively. By \cite[Lemma 3.40]{wonpic}, $\Hom_{\gr A(f)}(P,Q)$ is generated as a $\kk[z]$-module by left multiplication by $\theta_{P,Q} = \lcm_{i \in \ZZ}(q_i/\gcd(p_i,q_i))$. By construction, $\GGG(P)$ and $\GGG(Q)$ have no finite-dimensional submodules. Hence, $\Hom_{\qgr A(g)}\left(\GGG(P), \GGG(Q)\right) \cong \Hom_{\gr A(g)}\left(\GGG(P), \GGG(Q)\right)$ is also generated as a $\kk[z]$-module by some maximal embedding. We will show that in fact this maximal embedding is given by multiplication by the same $\theta_{P,Q}$.

By \cite[Lemma 3.10]{wonpic} and \ref{projXYZ}, for every $i \in \ZZ$, $c_i = 1$ if and only if $F_{i}(P) = X^f\s{i}$, and $c_i = (z+i)^2$ if and only if $F_{i}(P) = Y^f\s{i}$. The same is true for the structure constants $\{d_i\}$ of $Q$. Now let $I = \{i_1, \ldots, i_r\} \subset \ZZ$ be precisely those indices $i_j$ such that $F_{i_j}(P) = X^f\s{i_j}$ and $F_{i_j}(Q) = Y^f\s{i_j}$. These are the indices $i_j$ such that $c_{i_j} = 1$ and $d_{i_j} = (z + i_j)^2$. Observe that for distinct integers $n$ and $m$ then $c_n$ and $d_n$ are relatively prime to $c_m$ and $d_m$ so that 
\[ \theta_{P,Q} = \lcm_{i \in \ZZ} \frac{q_i}{\gcd\left(p_i,q_i\right)} = \prod_{i_j \in I} (z+i_j)^2 .
\]

Let $\{c_i'\}$ and $\{d_i'\}$ be the structure constants for $P'= \GGG(P) = \bigoplus_{i\in\ZZ} (p_i') x^i $ and $Q' = \GGG(Q) = \bigoplus_{i \in \ZZ}(q_i') x^i$. As in \cite[Table 1]{wonpic}, for each $n \in \ZZ$, we can calculate $c_n'$ from the simple factors of $P'$. Specifically $c_n'$ depends only on $F_n(P')$ and $F_{n + \m }(P')$. For any integer $n$, the polynomial $z+n$ is only possibly a factor of $c_n$ or $c_{n-\m }$ and these structure constants depend only on the simple factors at $n$, $n+ \m $ and $n- \m $. We use \cite[Table 1]{wonpic} and the fact that no shift of $Z$ is a factor of $P'$ to construct the following table:

\begin{table}[h!]
\centering
\caption{The structure constants of $P'$ in terms of its simple factors.}
\begin{tabular}{c|cc}
 & $F_n(P') = X^g\s{n}$ & $F_n(P') = Y^g\s{n}$ \\ \hline
$F_{n+\m }(P') = X^g\s{n + \m }$ & $c_n' = 1$ & $c_n' = \sigma^{n}(z)$ \\
$F_{n+\m }(P') = Y^g\s{n + \m }$ & $c_n' = \sigma^{n }(z + \m )$ & $c_n' = \sigma^{n }(f)$ \\
$F_{n-\m }(P') = X^g\s{n- \m }$ & $c_{n - \m }' = 1 $ & $c_{n - \m }' = \sigma^{n-\m }(z+ \m )$ \\
$F_{n-\m }(P') = Y^g\s{n- \m }$ & $c_{n- \m }' = \sigma^{n- \m }(z) $ & $c_{n - \m }' = \sigma^{n-\m }(f) $
\end{tabular}
\end{table}

Observe from the table that either
\begin{enumerate}[(i)]
\item $F_n(P') = Y^g\s{n}$ in which case $z+n$ is a factor of both $c_n'$ and $c_{n- \m }'$, or else
\item $F_n(P') = X^g\s{n}$ in which case $z+n$ is not a factor of either $c_n'$ or $c_{n-\m }'$.
\end{enumerate}

Now when we calculate the maximal embedding $P' \sra Q'$
\[ \theta_{P',Q'} = \lcm_{i \in \ZZ} \frac{q_i'}{\gcd\left(p_i',q_i'\right)}
\]
we see that for $i_j \in I$, $q_{i_j - \m }$ has a factor of $(z+i_j)^2$ while $p_{i_j - \m}$ has no factor of $(z+i_j)$. For all other integers $n$, the factor $(z+n)$ appears either only as a factor of $p_n$ and $p_{n-\m }$ or else as a factor of $p_n$, $p_{n - \m }$, $q_n$, and $q_{n- \m }$. Hence, 
\[ \theta_{P',Q'} = \prod_{i_j \in I} (z+i_j)^2 = \theta_{P,Q}.
\]
So the maximal embeddings $P\sra Q$ and $\GGG(P) \sra \GGG(Q)$ are both given by multiplication by the same element $\theta_{P,Q}$. If $\varphi \in \Hom_{\gr A(f)}(P,Q)$ is given by multiplication by $\theta_{P,Q} \beta$ then define $\GGG(\varphi) \in \Hom_{\qgr A(g)}(\GGG(P), \GGG(Q))$ to be multiplication by $\theta_{P,Q} \beta$ also. By our definition of $\GGG$ on morphisms, it is clear that $\GGG(\Id_{\gr A(f)})$ is the identity and $\GGG$ respects composition. 

Altogether, we have defined $\GGG$ on the canonical rank one projectives of $\gr A(f)$. By \cite[Lemma 4.3]{wonpic}, we can extend $\GGG$ to a functor $\GGG: \PPP \sra \PPP'$. It is easily seen that $\GGG$ is an equivalence on these subcategories. To see that $\GGG$ is full and faithful, notice that for canonical rank one projectives $P$ and $Q$ of $\gr A(f)$ our construction of $\GGG$ gave an isomorphism 
\[\Hom_{\gr A(f)}(P,Q) \cong \Hom_{\qgr A(g)}(\GGG(P), \GGG(Q)).\]
Now given direct sums of canonical rank one projectives $\bigoplus_{i\in I} P_i$ and $\bigoplus_{j\in J} Q_j$, the construction in \cite[Lemma 4.3]{wonpic} gives an isomorphism
\begin{align*}&\Hom_{\gr A(f)}\left(\bigoplus_{i\in I} P_i,\bigoplus_{j\in J} Q_j\right) \cong \bigoplus_{i\in I} \bigoplus_{j\in J} \Hom_{\gr A(f)}\left( P_i, Q_j\right) \\
&\cong  \bigoplus_{i\in I} \bigoplus_{j\in J} \Hom_{\qgr A(g)}\left(\GGG(P_i), \GGG(Q_j)\right) \cong\Hom_{\qgr A(g)}\left(\GGG\bigoplus_{i\in I} P_i, \GGG\bigoplus_{j\in J} Q_j\right).
\end{align*}
To see that $\GGG$ is essentially surjective, notice that given $P'$ in $\PPP'$, we can construct a module $P$ of $\gr A(f)$ such that $\GGG(P) \cong P'$ by constructing a direct sum of canonical rank one projectives in $\gr A(f)$ with corresponding simple factors. 

By \cite[Proposition 3.43]{wonpic}, every object of $\gr A(f)$ has a projective resolution by objects of $\PPP$. Similarly, by Proposition~\ref{qgrgenerator}, every object of $\qgr A(g)$ has a projective resolution by objects of $\PPP'$. Hence, by \cite[Lemma 4.2]{wonpic} there is an equivalence $\gr A(f) \equiv \qgr A(g)$.
\end{proof}

\begin{corollary} \label{cBequiv} Let $\m \in \NNp$. Then there is an equivalence of categories 
\[\qgr A\left( z (z+\m) \right) \equiv \operatorname{gr}(B,\ZZ_\fin).\]
\end{corollary}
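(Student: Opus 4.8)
The proof will be an immediate composition of two equivalences already established in the paper, so the plan is short. First I would invoke Theorem~\ref{qgrequivgr}, which for $\m \in \NNp$ gives an equivalence of categories $\gr A\!\left(z^2\right) \equiv \qgr A\!\left(z(z+\m)\right)$. Next I would invoke Theorem~\ref{Bequivalent} in the multiple root case $f = z^2$, which gives an equivalence $\gr A\!\left(z^2\right) \equiv \grB$, where $B$ is the $\ZZ_\fin$-graded commutative ring constructed in Section~\ref{mthcr} and presented explicitly in Proposition~\ref{Bpresentation}. Composing a quasi-inverse of the first equivalence with the second yields
\[ \qgr A\!\left(z(z+\m)\right) \equiv \gr A\!\left(z^2\right) \equiv \grB = \operatorname{gr}(B,\ZZ_\fin), \]
which is exactly the claim, since the ring $B$ appearing in Theorem~\ref{Bequivalent} is by construction the same $\ZZ_\fin$-graded ring $\kk[z][b_n \mid n \in \ZZ]/\left(b_n^2 = (z+n)^2\right)$ named in the statement.

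I do not expect any genuine obstacle here: both constituent facts are theorems proved earlier, and category equivalences compose, so the corollary is a one-line argument. The only point requiring a moment's attention is the bookkeeping — confirming that the ring produced by the autoequivalence construction of Section~\ref{mthcr}, as identified in Proposition~\ref{Bpresentation}, is literally the ring $B$ written in the statement of the corollary — but this is immediate from that proposition. Accordingly the written proof will simply cite Theorem~\ref{qgrequivgr} and Theorem~\ref{Bequivalent} and transitivity of $\equiv$.
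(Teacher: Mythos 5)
Your proposal is correct and matches the paper's own proof exactly: the corollary is obtained by composing Theorem~\ref{qgrequivgr} with Theorem~\ref{Bequivalent}. Nothing more is needed.
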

\begin{proof} This follows immediately from Theorem~\ref{qgrequivgr} and Theorem~\ref{Bequivalent}.
\end{proof}

\bibliography{refs}{}
\bibliographystyle{amsalpha}

\end{document}